\newtheorem{thm}{Theorem}
\newtheorem{defn}{Definition}
\newtheorem{remark}{Remark}
\newtheorem{lem}{Lemma}
\newtheorem{corollary}{Corollary}
\newtheorem{example}{Example}
\newtheorem{proposition}{Proposition}
\newtheorem{pro}{Property}
\begin{document}

\title{The Graph Fractional Fourier Transform \\ in Hilbert Space
}

\author{Yu Zhang and Bing-Zhao Li$^{\ast}$,~\IEEEmembership{Member,~IEEE,}
	\thanks{This paper was supported by the BIT Research and Innovation Promoting Project [No.2023YCXY053], the National Natural Science Foundation of China [No. 62171041], and Natural Science Foundation of Beijing Municipality [No. 4242011]. Corresponding author: Bing-Zhao Li.}
	\thanks{The authors are with School of Mathematics and Statistics, Beijing Institute of Technology, Beijing 102488, China (e-mail: li$\_$bingzhao@bit.edu.cn; zhangyu$\_$bit@keio.jp).}}

\markboth{Journal of \LaTeX\ Class Files,~Vol.~, No.~, ~2024}%
{Shell \MakeLowercase{\textit{et al.}}: A Sample Article Using IEEEtran.cls for IEEE Journals}

\IEEEpubid{0000--0000/00\$00.00~\copyright~2024 IEEE}

\maketitle

\begin{abstract}
Graph signal processing (GSP) leverages the inherent signal structure within graphs to extract high-dimensional data without relying on translation invariance. It has emerged as a crucial tool across multiple fields, including learning and processing of various networks, data analysis, and image processing. In this paper, we introduce the graph fractional Fourier transform in Hilbert space (HGFRFT), which provides additional fractional analysis tools for generalized GSP by extending Hilbert space and vertex domain Fourier analysis to fractional order. First, we establish that the proposed HGFRFT extends traditional GSP, accommodates graphs on continuous domains, and facilitates joint time-vertex domain transform while adhering to critical properties such as additivity, commutativity, and invertibility. Second, to process generalized graph signals in the fractional domain, we explore the theory behind filtering and sampling of signals in the fractional domain. Finally, our simulations and numerical experiments substantiate the advantages and enhancements yielded by the HGFRFT.
\end{abstract}

\begin{IEEEkeywords}
Graph signal processing, Hilbert space, Graph fractional Fourier transform, Joint time-vertex Fourier transform, Sampling.
\end{IEEEkeywords}

\section{Introduction}
\label{Intro}
\IEEEPARstart{G}{iven} the rising complexity of data on non-Euclidean spaces and irregular domains, the fields of theory and application in graph signal processing (GSP) have seen rapid development in recent years \cite{GFTlaplace,GFTadjacency1,GFTadjacency2, Goverview, Ghistory, GFTsampling, GFTSSS, Gfilters, Gfrequency, GFTrecovery, GFTuncertainty, Gglobal, Gvertex}. Within the GSP framework, two primary approaches to signal processing have been established \cite{GFTlaplace,GFTadjacency1}. The first leverages the concept of the Laplacian matrix \cite{GFTlaplace}, while the second, rooted in algebraic signal processing, employs the graph adjacency matrix, also recognized as the graph shift operator \cite{GFTadjacency1,GFTadjacency2}.

The extension of traditional time-frequency domain signal processing techniques into the GSP realm has been notable in recent years. These expansions encompass methods for sampling and interpolation \cite{GFTsampling,GFTSSS}, filtering \cite{Gfilters}, frequency analysis \cite{Gfrequency}, signal reconstruction \cite{GFTrecovery}, exploring the uncertainty principle \cite{GFTuncertainty, Gglobal}, the discrete Fourier transform (DFT) \cite{GFTlaplace,GFTadjacency1}, and the windowed Fourier transform \cite{Gvertex}. Analogous to the role of the DFT in traditional time-frequency domains, the graph Fourier transform (GFT) has been adapted for various GSP applications, including denoising \cite{JFT,JFRFT,JLCT}, classification \cite{GFTsampling}, clustering \cite{JFT,JFRFT,JLCT,Gclustering}, semi-supervised learning \cite{Glearning}, and advancements in machine and deep learning \cite{Gdeep}.

Traditional GSP theory \cite{GFTlaplace,GFTadjacency1,GFTadjacency2, Goverview, Ghistory} is predicated on the variation of the orthonormal basis within finite-dimensional vector spaces. A graph signal $f$ assigns a real value to each vertex, rendering $f$ an element of $\mathbb{R}^N$, where $\mathbb{R}$ denotes the set of real numbers. Regardless of whether a Laplacian or adjacency matrix is used, all eigenvalues are real and form an orthonormal basis \cite{Functional}. Most current research in graph signals adheres to this principle, with more complex graph structures receiving less attention.

However, in GSP, irregular graph data are typically modeled with vertices and edges, where each vertex can be assigned not just a real number but a mathematical object of richer structure. Ji and Tay \cite{HGFT,Ggeneralized} introduced the innovative \textit{generalized GSP} approach within Hilbert space. Their method allows for the processing of information in Hilbert space alongside the vertex domain. The graph Fourier transform in Hilbert space (HGFT), as a principal tool for joint Fourier analysis in this framework, is defined through compact operators in Hilbert space combined with graph adjacency matrices. The HGFT merges DFT in Hilbert space with GFT in the vertex domain, aiming to simplify signals from infinite continuous domains into more tractable finite ones. This framework encompasses filtering \cite{HGFT,Ggeneralized,Gl2norm,Gprobability,JFRFTfilter,JFRFTwiener}, sampling \cite{Jsampling,Jproduct,Jdirected}, and estimation \cite{Jestimation} techniques, applicable across three scenarios:
\begin{itemize}
	\item{Traditional GSP applies when vertices are assigned real numbers \cite{GFTlaplace,GFTadjacency1,GFTadjacency2, Goverview, Ghistory, GFTsampling, GFTSSS, Gfilters, Gfrequency, GFTrecovery, GFTuncertainty, Gglobal, Gvertex}.}
	\item{A joint time-vertex GSP is identified when vertices correspond to a specified finite closed interval $[a, b]$ adhering to the Lebesgue measure \cite{JFT,JFRFT,JLCT}.}
	\item{A novel framework arises when vertices are allocated a new graph structure within a Hilbert space, characterized by a discrete metric in $L^2$ or $\ell^2$ spaces \cite{HGFT,Ggeneralized,Gl2norm}.}
\end{itemize}

While the HGFT effectively facilitates basic graph frequency analysis within its framework, it relies on access to the entire graph signal in Hilbert space, which is challenging when transitioning from the Hilbert space-vertex domain to the graph frequency domain. As a result of these limitations, the application of HGFT across various fields has remained relatively superficial \cite{HGFT}, and it struggles to handle complex graph signals, such as \textit{chirp signals} on graphs, as illustrated in Fig. \ref{fig1}. This constraint underscores the necessity of developing more general and flexible GSP methodologies to address the demands of complex signal analysis.
\IEEEpubidadjcol
\begin{figure}[h!]
	\begin{center}
		\begin{minipage}[t]{1\linewidth}
			\centering
			\includegraphics[width=\linewidth]{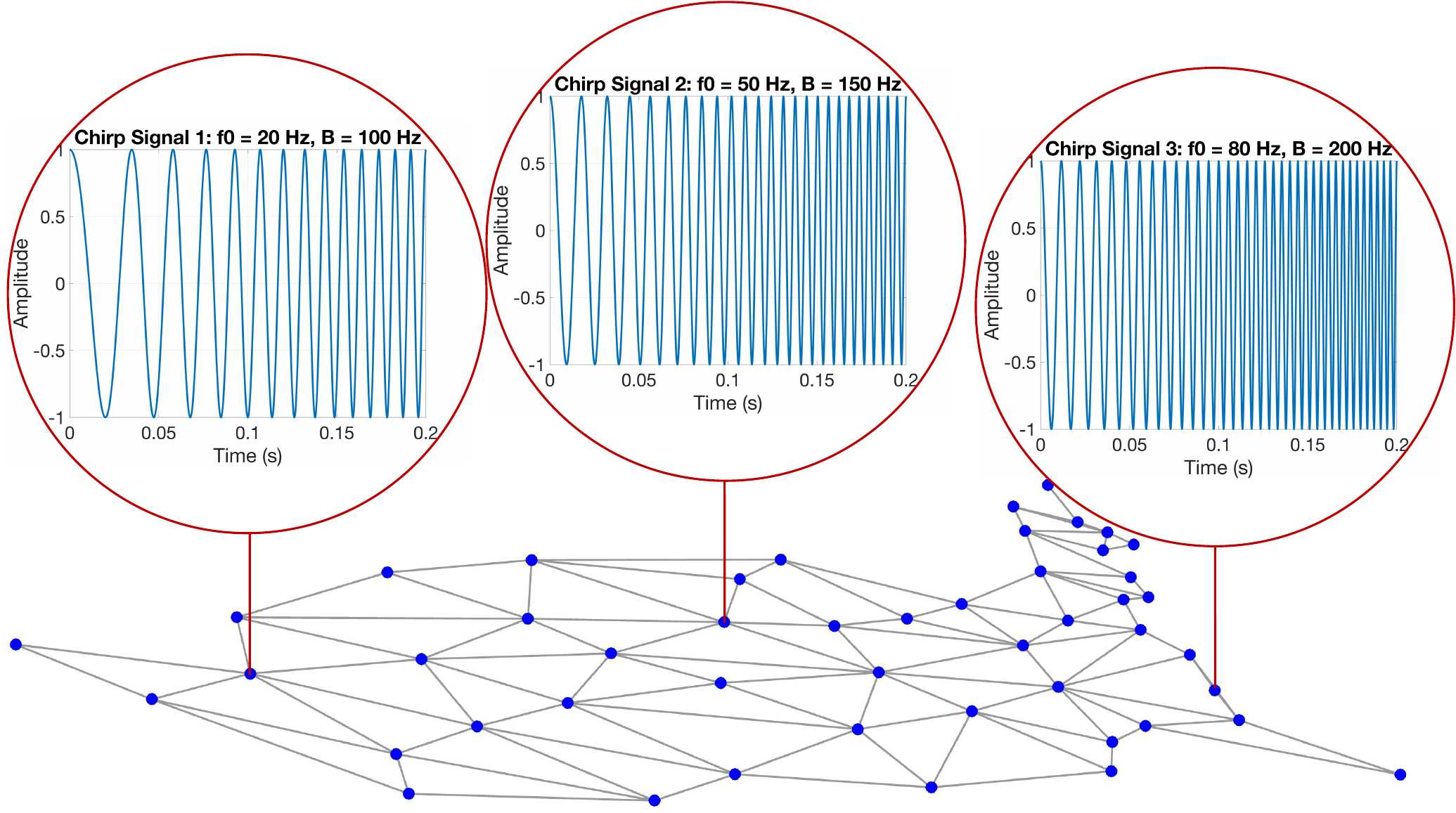}
		\end{minipage}
	\end{center}
	\caption{Chirp signals on the U.S. graph.}
	\vspace*{-3pt}
	\label{fig1}
\end{figure}

To address these challenges, this study explores the application of the fractional Fourier transform (FRFT) within the domain of generalized GSP. First introduced in 1980 \cite{FRFT1} and later applied to signal processing in 1994 \cite{FRFT2}, the FRFT extends the Fourier transform (FT) by rotating a signal into an intermediate domain between time and frequency. For a given signal $f(t)$, the $\alpha$th-order  FRFT is defined for $0<|\alpha|<2$ as
\begin{align}
	\begin{split}
		{\mathcal{F}^{\alpha}_{f}}(u)&:=\int_{-\infty}^{\infty} K_{\alpha}(u,t)f(t)\mathrm{d}t \\
		K_{\alpha}(u,t)&:=K_{\theta}\mathrm{e}^{j\pi(u^2\cot\theta  - 2ut\csc\theta  + t^2\cot\theta  )}
	\end{split}
\end{align}
where $\theta :=\alpha\pi/2$, and $K_\theta := \exp[-j(\pi\mathrm{sgn}(\theta)/4-\theta/2)]/|\sin\theta|^{0.5}.$
The kernel $K_\alpha(u,t)$ is defined separately for $\alpha=0$ and $\alpha=\pm 2$ as $K_0(u,t):=\delta(u-t)$ and $K_{\pm 2}(u,t):=\delta(u+t)$. The parameter $\alpha$ governs the rotation of the signal in the time-frequency plane, where $\alpha = 0$ corresponds to the FT and $\alpha = \pm 1$ denotes time reversal. This transformation disperses the signal across a series of linear frequency-modulated pulses, providing a more flexible approach to time-frequency signal processing.

The FRFT has widespread applications in signal processing, particularly in analyzing non-stationary signals. It excels in radar, communication, image, audio, and biomedical signal processing \cite{FRFT1,FRFT2,DFRFTlinear,STFRFT,DFRFTaffine,DFRFTSamplingMethod,DFRFT1,DFRFT2,DFRFT3,DFRFT4}. Adjusting the transformation order allows it to align better with instantaneous frequency characteristics, improving detection accuracy and feature extraction. However, most practical applications necessitate discretization, including in GSP. The challenge of discretizing the FRFT without direct time domain sampling has garnered considerable research attention. The methods for defining and calculating the discrete fractional Fourier transform (DFRFT) primarily fall into three categories: linear combination methods \cite{DFRFTlinear,DFRFTaffine}, sampling methods \cite{DFRFTSamplingMethod}, and eigendecomposition methods \cite{DFRFT1,DFRFT2,DFRFT3,DFRFT4}.

In this paper, we clarify the advantages of the FRFT by applying it to radar chirp signals \cite{Radar} on graphs as illustrated in Fig. \ref{fig1} and comparing its performance to the HGFT. This example illustrates the benefits of FRFT across various graph signal processing tasks, highlighting its potential for expanded applications within GSP. Integrating FRFT into GSP, similar to the FT extension, offers new avenues for enhanced performance and broader application in this field.

The development of the graph fractional Fourier transform (GFRFT) \cite{GFRFT,GFRFTspectral} represents a significant advancement, allowing for the transformation of graph signals into an intermediary vertex frequency or vertex spectral domain. This breakthrough, along with the generalization of the windowed fractional Fourier transform and the linear canonical transform to GSP \cite{WGFRFT,GLCT}, has facilitated research into operations within the fractional domain such as classification \cite{GFRFTsampling,GLCTsampling}, sampling \cite{GFRFTsampling,GLCTsampling,Ggeneralizedsamp}, filtering \cite{GFRFTfiltering}, and denoising \cite{GFRFTdirected}.

The GFRFT offers a distinct advantage by permitting the observation of the spectrum of graph signals from various angles without needing information on the entire graph signal. Its flexibility, attributed to the presence of an order, makes it more versatile than the GFT, particularly for processing ``graph chirp signals''. Therefore, incorporating the GFRFT into the generalized GSP framework \cite{HGFT,Ggeneralized} not only deepens the understanding of HGFT characteristics but also opens exciting research avenues. Employing fractional order in joint Hilbert space-vertex signal spectrum analysis enables the processing of signals with complex structures within the fractional domain, marking a promising direction for current research efforts. In the vein of the FRFT, which generalizes the FT, the proposal of the graph fractional Fourier transform in Hilbert space (HGFRFT) introduces a method for analyzing signals across both the graph fractional domain and the Hilbert space fractional domain. This framework is also applied to real-world datasets, with strategies developed for various scenarios of interest, further extending its applicability and utility in the field of GSP.

\textit{Contributions:} 
\begin{itemize}
	\item{The introduction of the HGFRFT allows for the analysis of vertex signals within Hilbert space across a broader spectrum of transformation categories, offering a novel solution for graph chirp signals.}
	\item{Filters grounded in the HGFRFT framework for graph signals in Hilbert space are proposed, enriching GSP tools within the fractional domain.}
	\item{A pioneering sampling operator rooted in the HGFRFT for graph signals in Hilbert space is introduced, adept at addressing intricate sampling challenges in the Hilbert space-vertex domain.}
\end{itemize}

The remainder of this paper is organized as follows. In Section II, we provide preliminary information on the generalized GSP, GFRFT and HGFT. In Section III, we define the proposed HGFRFT and provide its properties and examples. In Sections IV and V, we introduce the HGFRFT-based filters and sampling theorem, respectively. In Section VI, we demonstrate the superiority of the HGFRFT in fractional sampling, frequency analysis and denoising through simulation experiments and three applied numerical experiments. In Section VII, we conclude the paper.

\textit{Notations:} The symbols $\otimes$ and $\circ$ signify the tensor product and function composition, respectively, while $\cong$ indicates isomorphism. $\mathbf{I}_{N}$ represents an identity matrix of size $N$; without specific size requirements, we abbreviate it as $\mathbf{I}$. For square matrices $\mathbf{A} \in \mathbb{C}^{N\times N}$ and $\mathbf{B} \in \mathbb{C}^{M\times M}$, $\mathbf{A}\otimes_{K} \mathbf{B}$ symbolizes their Kronecker product. Here, $\otimes_{K}$ applies to the matrix (thus viewed as a second-order tensor) and exemplifies a particular type of tensor product. $\mathbf{A}^{\top}$, $\mathbf{A}^{\text{H}}$, and $\mathbf{A}^{\dag}$ denote the transpose, conjugate transpose, and pseudo-inverse of matrix $\mathbf{A}$, respectively. The space $L^{2}(\Omega)$ is defined over the measure space $(\Omega, \mathcal{F}, \mu)$, and $\mathbb{C}^{N}$ comes equipped with the inner product $\left<\cdot, \cdot\right>_{\mathbb{C}^{N}}$ and the associated norm $||\cdot||_{\mathbb{C}^{N}}$. This space encompasses functions $f : \Omega \rightarrow \mathbb{C}$ for which the integral $\int_{\Omega} \left| f\right|^{2} \mathrm{d} \mu$ is finite. Other notations used in this paper are summarized in Table \ref{tab1}.
\begin{table}
	\caption{Summary of Notations Used: $f$ Represents Any Vector, $\Psi = \left\{ \psi_{i} \right\}$ and $\Phi = \left\{ \phi_{j} \right\}$ Denote Arbitrary Orthonormal Bases, and $\alpha, \beta \in \mathbb{R}$ Signify Fractional Orders. $\mathcal{H}_{f}$ Refers to the DFT of $f$ in Hilbert Space, $\mathcal{G}_{f}$ Represents the GFT of $f$, and $\mathcal{F}_{f}$ Stands for the HGFT of $f$}
	\label{tab1}
	\begin{tabular}{c|l}
		\hline
		Symbol&Description\\
		\hline
		$\mathcal{H}$ & A Hilbert space\\
		\hline
		$\mathcal{G}$ & A simple, finite, undirected, weighted graph\\
		\hline
		\multirow{2}{*}{$S(\mathcal{H},\mathcal{G})$} & A separable Hilbert space that is isomorphic\\
		& to $\mathcal{H} \otimes \mathbb{C}^{N}$\\
		\hline
		\multirow{2}{*}{$\Psi \otimes \Phi$} & The tensor product of any two orthonormal\\
		& bases\\
			\hline
		$\mathcal{H}^{\alpha}_{f}$ & The DFRFT of $f$ in Hilbert space\\
			\hline
		$\mathcal{G}^{\beta}_{f}$ & The GFRFT of $f$ \\
		\hline
		$\mathcal{F}^{\alpha,\beta}_{f}$ & The HGFRFT of $f$\\
		\hline
		\multirow{2}{*}{$\Psi^{\alpha}$} & The fractional rotation of the orthonormal\\
		& basis $\Psi$ by the fractional order $\alpha$\\
		\hline
		\multirow{2}{*}{$\Psi^{\alpha}\otimes \Phi^{\beta}$ } & The tensor product of any two orthonormal\\
		& bases rotated by different fractional orders\\
		\hline
		\multirow{2}{*}{$(\psi^{\ast}_{i})^{\alpha}$} & The fractional rotation of $\alpha$ after conjugation\\
		& of the orthonormal basis $\psi$\\
		\hline
		\multirow{3}{*}{$(\psi^{\ast}_{i})^{\alpha} \otimes (\phi^{\ast}_{j})^{\beta}$} & The tensor product of any two orthonormal\\
		& bases after conjugation and rotation of \\
		& different fractional orders\\
		\hline
	$\overline{\Psi}^{\alpha}$ & A finite subset of $\Psi^{\alpha}$ \\
		\hline
	\multirow{2}{*}{$\overline{\Psi}^{\alpha} \otimes \overline{\Phi}^{\beta}$} & The tensor product of two different finite \\
	& subsets\\
	\hline
	$|\Psi^{\alpha}|$ & The dimension of the orthonormal basis $\Psi^{\alpha}$ \\
		\hline
	\end{tabular}
\end{table}

\section{Preliminaries}
\label{Preliminaries}
In traditional GSP, the signals on the vertices of $\mathcal{G}=(\mathcal{V},\mathcal{E})$ are assumed to be real or complex, where $\mathcal{V}$ is a vertex and $\mathcal{E}$ is an edge. Before extending the GFRFT to Hilbert space, we review the basic concepts and definitions of graph signals in Hilbert space, the HGFT, and the GFRFT.

\subsection{Graph Signals in Hilbert Space}
Consider $\mathcal{H}$ as a Hilbert space, which is a complete space with an inner product and $\mathcal{G}$ as a simple, finite, undirected, weighted graph. Let a function $f: \mathcal{V} \rightarrow \mathcal{H}$ represent a graph signal within $\mathcal{H}$, and denote the space of such graph signals as $S(\mathcal{H}, \mathcal{G})$. In many instances of physical signals, $\mathcal{H}$ is considered separable if the signal possesses a countable orthonormal basis \cite{Hilbert}. For a given measurable space $\Omega$ equipped with measure $\mu$, $\mathcal{H}$ can be identified with $L^{2}(\Omega)$. For any $f\in S(\mathcal{H}, \mathcal{G})$ and any vertex $v \in \mathcal{V}$, the function $f(v) \in L^{2}(\Omega)$, and $f$ can be interpreted as a function of two variables, $f(x, v) = f(x)(v) \in \mathbb{C}$, for each $x \in \Omega$.

For a finite-dimensional vector space $\mathbb{C}^{N}$, it is possible to construct a tensor product $\mathcal{H}\otimes \mathbb{C}^{N}$. This leads to the formation of $x\otimes v$ which upholds the following relations \cite{Algebra} for all $x, x_{i} \in \mathcal{H}, v, v_{i} \in \mathbb{C}^{N}, r \in \mathbb{R}$, where $i = 1, ..., n$.
\begin{enumerate}
	\item $(x_1 + x_2) \otimes v=x_1 \otimes v + x_2 \otimes v$;
	\item$x \otimes (v_1 + v_2)=x \otimes v_1 + x \otimes v_2$;
	\item$xr \otimes v = x \otimes rv$.
\end{enumerate}

To establish a metric on $\mathcal{H} \otimes \mathbb{C}^{N}$, since $\mathbb{C}^N$ is of finite dimension, the space $\mathcal{H} \otimes \mathbb{C}^{N}$ achieves completeness and thus qualifies as a Hilbert space. Equipping the tensor product $\mathcal{H} \otimes \mathbb{C}^{N}$ with the inner product $\left< \cdot ,\cdot \right>_{\mathcal{H} \otimes \mathbb{C}^{N}}$ allows for the derivation of the following properties
\begin{equation}
	\left< x_{1}\otimes v_{1},x_{2}\otimes v_{2}\right>_{\mathcal{H} \otimes \mathbb{C}^{N}  }  =\left< x_{1},x_{2}\right>_{\mathcal{H}}  \left<v_{1},v_{2} \right>_{\mathbb{C}^{N}}.\label{CnH}
\end{equation}

The signal space $S(\mathcal{H}, \mathcal{G})$ under examination is notably complex. Nonetheless, it is isomorphic to $\mathcal{H} \otimes \mathbb{C}^{N}$, as indicated in \cite{Algebra}. This relationship allows for $S(\mathcal{H}, \mathcal{G})$ to be expressed through a specific mapping
\begin{equation}
	\varphi(f) = \sum_{v\in \mathcal{V}} f(v) \otimes v, \quad f \in S(\mathcal{H}, \mathcal{G}), \label{varphi}
\end{equation}
where $v \in \mathbb{C}^N$, and $\mathcal{V}$ is represented using the standard basis of $\mathbb{C}^N$. This mapping is achieved by constructing a tensor product, and $S(\mathcal{H}, \mathcal{G})$ is also a separable Hilbert space.

The mapping $\varphi$, as described by the equation above, facilitates the translation of the structural properties of $\mathcal{H} \otimes \mathbb{C}^{N}$ to the Hilbert space $S(\mathcal{H}, \mathcal{G})$. For functions $f, g \in S(\mathcal{H}, \mathcal{G})$, an inner product is defined as follows
\begin{equation}
	\left< f, g \right> = \left< \varphi(f), \varphi(g) \right>_{\mathcal{H} \otimes \mathbb{C}^{N}}.
\end{equation}
Given the isomorphism between $S(\mathcal{H}, \mathcal{G})$ and $\mathcal{H} \otimes \mathbb{C}^{N}$, any function $f \in S(\mathcal{H}, \mathcal{G})$ can also be considered an element of $\mathcal{H} \otimes \mathbb{C}^{N}$.

\subsection{Graph Fourier Transform in Hilbert Space}
The HGFT represents the DFT within the broader context of generalized GSP \cite{Ggeneralized}. This approach extends the concept of the frequency domain to the realm of generalized GSP. Let $\Psi = \left\{ \psi_{i} \right\}_{i \geq 1}$ be an orthonormal basis for $\mathcal{H}$ and $\Phi = \left\{ \phi_{j} \right\}_{1 \leq j \leq N}$ for $\mathbb{C}^N$. Then, $\left\{ \psi_{i} \otimes \phi_{j} \right\}_{i \geq 1, 1 \leq j \leq N}$ forms the orthonormal basis of $\Psi \otimes \Phi$ in the signal space $S(\mathcal{H}, \mathcal{G})$. For each $f \in S(\mathcal{H}, \mathcal{G})$, the joint transform, or the HGFT, is defined as
\begin{equation}
	\mathcal{F}_{f}\left( \psi \otimes \phi \right) = \left< f,\psi \otimes \phi \right>  := \left< \varphi \left( f\right), \psi \otimes \phi \right>_{\mathcal{H} \otimes \mathbb{C}^{N}},
\end{equation}
where $\varphi$ is the isomorphic mapping in Eq. \eqref{varphi}, and the inner product is as defined in Eq. \eqref{CnH}. According to Parseval's theorem, for all $f \in S(\mathcal{H}, \mathcal{G})$, $\sum_{\psi \otimes \phi} \left| \mathcal{F}_{f}\left( \psi \otimes \phi \right) \right|^{2} < \infty$.

Since signals in the Hilbert space $\mathcal{H}$ have their own transform space, the HGFT can be decomposed into simpler components, known as partial HGFTs, which include the DFT in $\mathcal{H}$ and the GFT \cite{JFT,JFRFT,JLCT,Gclustering}. Given that $\mathcal{V}$ is linked to the standard basis of $\mathbb{C}^{N}$, thus $f(x, \cdot) = \sum_{v \in \mathcal{V}} f(x, v) v \in \mathbb{C}^{N}$. Therefore, for $v \in \mathcal{V}$ and $x \in \Omega$, the partial HGFTs are defined as
\begin{equation}
	\text{DFT:} \quad \mathcal{H}_{f}(\psi)(v) = \left< f(\cdot, v), \psi \right>_{\mathcal{H}},
\end{equation}
\begin{equation}
	\text{GFT:} \quad \mathcal{G}_{f}(\phi)(x) = \left< f(x, \cdot), \phi \right>_{\mathbb{C}^{N}}.\label{Gfphi}
\end{equation}
Furthermore, the following relations exist
\begin{equation}
	\mathcal{F}_{f}\left( \psi \otimes \phi \right)  =\left< \mathcal{H}_{f}\left( \psi \right)  ,\phi \right>_{\mathbb{C}^{N}}  =\left< \mathcal{G}_{f}\left( \phi \right)  ,\psi \right>_{\mathcal{H}}  .
\end{equation}

The inverse HGFT follows from the properties of the orthonormal basis $\Psi \otimes \Phi$
\begin{equation}
	\mathcal{F}^{-1}_{f} = \sum_{\psi \otimes \phi} f(\psi \otimes \phi) \cdot \psi \otimes \phi,
\end{equation}
where $\sum_{\psi \otimes \phi} \left|  f(\psi \otimes \phi) \right| < \infty$.

When defining HGFT in conjunction with graphs, the orthonormal basis $\Phi$ is typically chosen as the eigenbasis of a symmetric graph shift operator $\mathbf{A}$. Additionally, a bounded, self-adjoint operator $\mathbf{B}$ can be defined such that for any $x, y \in \mathcal{H}$, it holds that $\left< \mathbf{B}x, y\right>_{\mathcal{H}} = \left< x, \mathbf{B}y\right>_{\mathcal{H}}$. As a result, all eigenvalues of $\mathbf{B}$ are real, and $\mathcal{H}$ has an orthonormal basis composed of the eigenvectors of $\mathbf{B}$.

\subsection{Graph Fractional Fourier Transform}
The GFT \cite{GFTlaplace,GFTadjacency1,GFTadjacency2} is defined by eigendecomposition of adjacency matrix $\mathbf{A=V\Lambda_{A}V}^{-1}$, or the Laplace matrix $\mathcal{L}=\mathbf{\chi} \mathbf{\Lambda}_{\mathcal{L}} \mathbf{\chi}^{-1}$, where the columns $\{ \phi_{j} \}^{N-1}_{j=0}$ of $\mathbf{V}$ are the eigenvectors of $\mathbf{A}$, and the eigenvalues in the corresponding diagonal matrix $\mathbf{\Lambda_{A}}$ are $\{ \lambda_{\mathbf{A}_{j}}\}^{N-1}_{j=0}$. The columns of $\mathbf{\chi}$ are the eigenvectors of $\mathcal{L}$, corresponding to the eigenvalues $
\{ \lambda_{\mathcal{L}_{j}}\}^{N-1}_{j=0}$ in the diagonal matrix $\mathbf{\Lambda}_{\mathcal{L}}\in\mathbb{C}^{N\times N}$.

In a cyclic graph\footnote{Despite being directed, the adjacency matrix of a cyclic graph is circulant, allowing unitary diagonalization, thus satisfying $\mathbf{V}^{-1} = \mathbf{V}^{\text{H}}$.}, the GFT corresponds to the DFT in the time domain. To align it with the DFT in Hilbert space $\mathcal{H}$ and extend its applicability, the adjacency matrix method is used to define the GFT of graph signal $f$ \cite{GFTadjacency2}
\begin{equation}
\hat{f}=\mathbf{F}_{\mathcal{G}}f=\mathbf{V}^{-1}f,
\end{equation}
which, consistent with Eq. \eqref{Gfphi}, can also be expressed as an inner product
\begin{equation}
\hat{f}(\lambda_{\mathbf{A}_j}) := \left< f, \phi_j \right> = \sum^{N}_{\ell=1} f(\ell) \phi^{\ast}_j(\ell).\label{GFT}
\end{equation}

Thus, the $\beta$th GFRFT of a graph signal $f \in \mathbb{C}^{N \times 1}$ is defined as \cite{GFRFT,GFRFTspectral}
\begin{equation}
	\hat{f}= \mathbf{F}^{\beta }_{\mathcal{G}}f := \mathbf{Q}\mathbf{\Lambda}^{\beta } \mathbf{Q}^{-1}f,\  \  \beta \in \mathbb{R}, \label{GFRFT}
\end{equation}
where the matrices $\mathbf{Q}$ and $\mathbf{\Lambda}$ arise from the spectral decomposition of the orthogonal and diagonalized GFT matrix $\mathbf{V}^{-1}$, such that $\mathbf{F}_{\mathcal{G}} = \mathbf{V}^{-1} = \mathbf{Q \Lambda Q }^{-1}$. According to Eq. \eqref{GFT}, the GFRFT can also be expressed as the inner product
\begin{equation}
\hat{f}(\lambda_j) = \left< f, \phi^{\beta}_j \right> = \sum^{N}_{\ell=1} f(\ell)( \phi^{\ast}_j )^{\beta}(\ell), \label{phibeta}
\end{equation}
where $( \phi^{\ast}_j)^{\beta}$ is formed by the columns of the matrix $\mathbf{F}^{\beta }_{\mathcal{G}}$. Its inverse transform is given by
\begin{equation}
f(\ell) = \sum^{N-1}_{j=0} \hat{f}(\lambda_j) \phi^{\beta}_j(\ell).
\end{equation}

The uniqueness of the fractional powers are presented in Appendix \ref{appendixA}. It is easy to verify that when $\beta =0$, the GFRFT of the graph signal is the signal itself. When $\beta =1$, the GFRFT is reduced to the GFT.

\section{Graph Fractional Fourier Transform in Hilbert Space}
\label{Fa-transform}
To introduce a novel transformation method, similar to how the DFT is embedded in generalized GSP, we extend the DFRFT into this framework, referring to it as the HGFRFT. This approach defines the frequency domain within the generalized fractional GSP, increasing flexibility and enabling clear observation of transformations between the vertex and graph spectral domains—key interactions in GSP. Since signals in the Hilbert space $\mathcal{H}$ have their own transformation space, like the HGFT, the HGFRFT can be decomposed into partial components, referred to as partial HGFRFTs.

\subsection{Definition}
\label{3.1}
Before defining the HGFRFT, we give a few assumptions:
\begin{enumerate}
	\item $\mathbf{A}$ is a unitarily diagonalizable graph shift operator on the graph $\mathcal{G}$.
	\item $\mathbf{B}$ is a compact, self-adjoint, and injective operator acting on the Hilbert space $\mathcal{H}$, which can be infinite-dimensional. \label{A2}
	\item $\Psi$ is an orthonormal basis of $\mathcal{H}$ consisting of the eigenvectors of $\mathbf{B}$. $\Phi$ is an orthonormal basis of $\mathbb{C}^{N}$ consisting of the eigenvectors of $\mathbf{A}$, which we will later restrict to finite dimensions for illustrative examples. \label{A3}
\end{enumerate}

By the \textit{spectral theorem} \cite{Functional}, $\mathbf{B}$ has an eigendecomposition, guaranteeing an orthonormal eigenbasis and enabling a rigorous extension of the transform to infinite-dimensional Hilbert spaces. The proof of Assumption \ref{A2} is provided in Appendix \ref{appendixB}. Since the DFRFT is equivalent to the GFRFT for cyclic graphs, we extend its applicability using the spectral theorem, which ensures an orthonormal basis for $\mathcal{H}$, thus allowing the DFRFT to extend beyond finite-dimensional graphs. For a signal $f \in \mathbb{C}^{N \times 1}$, the $\alpha$th DFRFT \cite{DFRFT2} is defined as
\begin{equation}
	\hat{f} (\lambda_{i} )=\left< f,\psi^{\alpha }_{i} \right>  =\sum^{N}_{\ell =1} f(\ell )(\psi^{\ast }_{i} )^{\alpha }(\ell ),\ \ \alpha \in \mathbb{R}, \label{DFRFT}
\end{equation}
where $\lambda_{i}$ denotes the eigenvalue of $\mathbf{B}$, and the definition of $(\psi^{\ast }_{i} )^{\alpha }$ follows the same form as $( \phi^{\ast}_j)^{\beta}$ in Eq. \eqref{phibeta}.

Similar to the construction of the HGFT, we define the order pair $(\alpha, \beta)$ such that $\Psi^{\alpha} = \{ \psi^{\alpha}_{i} \}_{i\geq 1}$ is an orthonormal basis for $\mathcal{H}$, and $\Phi^{\beta} = \{ \phi^{\beta}_{j}\}_{1 \leq j \leq N}$ serves as an orthonormal basis for $\mathbb{C}^N$. Together, $\{ \psi^{\alpha}_{i} \otimes \phi^{\beta}_{j} \}_{i\geq 1, 1 \leq j \leq N}$ forms the orthonormal basis of $\Psi^{\alpha} \otimes \Phi^{\beta}$ in the Hilbert space $\mathcal{H} \otimes \mathbb{C}^{N}$.

Additionally, we assume that the basis functions $(\psi_i^{\ast})^{\alpha}$ and $(\phi_j^{\ast })^{\beta}$, adjusted by parameters $\alpha$ and $\beta$, are defined as complete, self-adjoint, and dense in their respective Hilbert spaces. This assumption is based on the original properties of $\psi_i$ and $\phi_j$ (cf. Assumption \ref{A3}). Specifically, since $\psi_i$ and $\phi_j$ form orthonormal bases in $\mathcal{H}$ and $\mathbb{C}^N$, respectively, their orthogonality, completeness, and density are preserved under conjugation and parameter adjustment. Hence, $(\psi_i^*)^{\alpha}$ and $(\phi_j^*)^{\beta}$ retain these properties.

We then define the operator $\mathcal{F}^{\alpha,\beta}(\cdot)$ as an equivalent transform between $S(\mathcal{H}, \mathcal{G})$ spaces:
\begin{defn}
	For $f \in S\left( \mathcal{H} ,\mathcal{G} \right)$, the HGFRFT of $f$ is defined as
	\begin{equation}
		\hat{f }  =\mathcal{F}^{\alpha ,\beta }_{f}=\left< f,\psi^{\alpha} \otimes \phi^{\beta} \right>  :=\left< \varphi \left(f\right) , \psi^{\alpha} \otimes \phi^{\beta} \right>_{\mathcal{H}\otimes \mathbb{C}^{N}},\label{Falphabeta}
	\end{equation}
	where $\varphi$ denotes the isomorphic mapping as defined in Eq. \eqref{varphi}, and the inner product on the right-hand side follows Eq. \eqref{CnH}. Additionally, $\sum_{\psi^{\alpha} \otimes \phi^{\beta}} | \mathcal{F}^{\alpha, \beta}_{f} |^{2} < \infty$.
\end{defn}
\begin{remark}
The HGFRFT can also be represented in terms of matrix operators for the finite-dimensional case:
	\begin{equation}
		\hat{f } =\mathcal{F}^{\alpha ,\beta }_{f }=\mathbf{F}^{\alpha ,\beta } f=\left( \mathbf{F}^{\alpha }_{\mathcal{H} } \otimes_{K} \mathbf{F}^{\beta }_{\mathcal{G} } \right)  f, \label{FHCN}
	\end{equation}
	where $\otimes_{K}$ denotes the Kronecker product, and the matrices $\mathbf{F}^{\alpha}_{\mathcal{H}}$ and $\mathbf{F}^{\beta}_{\mathcal{G}}$ are composed of the orthonormal bases $(\psi^{\ast}_{i})^{\alpha}$ and $(\phi^{\ast}_{j})^{\beta}$, respectively. However, in infinite-dimensional Hilbert spaces, this should be understood in terms of operators acting on the respective Hilbert spaces.
\end{remark}

The HGFRFT of $f$ can be decomposed into simpler parts, called partial HGFRFTs, which we define as
\begin{equation}
	\mathcal{H}^{\alpha }_{f}   \left( v\right)  =\left< f \left( \cdot ,v\right)  ,\psi^{\alpha } \right>_{\mathcal{H} } , \label{Halpha}
\end{equation}			
\begin{equation}
	\mathcal{G}^{\beta }_{f}  \left( x\right)  =\left< f \left( x,\cdot \right)  ,\phi^{\beta } \right>_{\mathbb{C}^{N} }  , \label{Gbeta}
\end{equation}
for every $v \in \mathbb{C}^{N}$ and $x \in \mathcal{H}$. Eq. \eqref{Halpha} represents the DFRFT, and Eq. \eqref{Gbeta} corresponds to the GFRFT. Both can also be expressed in matrix form as $\mathcal{H}^{\alpha }_{f} = \mathbf{F}^{\alpha}_{\mathcal{H}} f$, and $	\mathcal{G}^{\beta }_{f} = \mathbf{F}^{\beta}_{\mathcal{G}} f$. These matrix representations are specific to finite dimensions, while in infinite-dimensional spaces, the respective operators replace the matrix form.

\subsection{Properties}
In this subsection, we introduce the important properties, propositions and corollaries of the proposed HGFRFT and give corresponding proofs.

\begin{pro}
	(Nestability) $\mathcal{H}^{\alpha}_{f} \in \mathbb{C}^{N}$ and $\mathcal{G}^{\beta}_{f} \in \mathcal{H}$.
\end{pro}
\begin{proof}
We can write $\mathcal{H}^{\alpha }_{f } =\sum_{v\in \mathcal{V}} \mathcal{H}^{\alpha }_{f} (v)v\in \mathbb{C}^{N} $. We use the standard basis of $\mathbb{C}^{N}$ to identify $\mathcal{V}$ and consider $\mathcal{G}^{\beta}_{f}$ as a mapping $\Omega \rightarrow \mathbb{C}$. From the Cauchy-Schwarz inequality \cite{Ggeneralized}, we have
	\begin{equation*}
		\begin{aligned}\int_{\Omega } \left| \mathcal{G}^{\beta }_{f } \left( x\right)  \right|^{2}  \mathrm{d} \mu \left( x\right)  \leqslant &\int_{\Omega } \left| \left| f \left( x,\cdot\right)  \right|  \right|^{2}_{\mathbb{C}^{N} }  \mathrm{d} \mu \left( x\right)  \\ =\int_{\Omega } \sum_{v\in \mathcal{V}} \left| f \left( x,v\right)  \right|^{2}  \mathrm{d} \mu \left( x\right)  =&\sum_{v\in \mathcal{V}} \int_{\Omega } \left| f \left( x,v\right)  \right|^{2}  \mathrm{d} \mu \left( x\right)  <\infty ,\end{aligned} 
	\end{equation*}
	where the first equality follows from Parseval’s formula, and the last inequality is because $f (x,\cdot ) \in \mathcal{H} = L^{2}(\Omega)$ and $\mathcal{V}$ is finite. Therefore, $\mathcal{G}^{\beta}_{f} \in \mathcal{H}$.
\end{proof}

\begin{pro}
	(Associativity) According to Eqs. \eqref{FHCN}, \eqref{Halpha}, and \eqref{Gbeta}, the following relationship can be obtained
	\begin{equation}
	\mathcal{F}^{\alpha ,\beta }_{f } =\left< \mathcal{H}^{\alpha}_{f}, \phi^{\beta}  \right>_{\mathbb{C}^{N}}  =\left< \mathcal{G}^{\beta}_{f} ,\psi^{\alpha}  \right>_{\mathcal{H}}. \label{FHG}
	\end{equation}
\end{pro}
\begin{proof}
	For operator $\mathcal{H}^{\beta }_{f}$, we have
	\begin{equation*}
		\begin{aligned}\left< \mathcal{H}^{\alpha}_{f}  ,\phi^{\beta}  \right>_{\mathbb{C}^{N}} 
			=&\left< \sum_{v} \mathcal{H}^{\alpha}_{f}  \left( v\right)  v, \phi^{\beta}  \right>_{\mathbb{C}^{N}}  \\
			=&\sum_{v} \mathcal{H}^{\alpha}_{f}  \left( v\right)  \left< v, \phi^{\beta}  \right>_{\mathbb{C}^{N}}  \\ 
			\overset{(a)}{=}&\sum_{v} \left< f\left( v\right)  ,\psi^{\alpha}  \right>_{\mathcal{H}}  \left< v, \phi^{\beta}  \right>_{\mathbb{C}^{N}}  \\
			\overset{(b)}{=}&\sum_{v} \left< f\left( v\right) \otimes v ,\psi^{\alpha}  \otimes \phi^{\beta}  \right>_{\mathcal{H}\otimes \mathbb{C}^{N}}  \\ 
			\overset{(c)}{=}&\left< f,\psi^{\alpha}  \otimes \phi^{\beta}  \right>  =\mathcal{F}^{\alpha,\beta}_{f}.  \end{aligned} 
	\end{equation*}
	Equality $(a)$ is obtained by Eq. \eqref{Halpha}, $(b)$ is obtained by Eq. \eqref{CnH}, and $(c)$ is obtained by Eqs. \eqref{varphi} and \eqref{Falphabeta}. This proves the first equality in Eq. \eqref{FHG}. The second equation can be proved similarly.
\end{proof}

\begin{pro}
	(Zero rotation) $\mathcal{F}_{f}^{0 ,0 }=\left< f,\psi^{0} \otimes \phi^{0} \right>  =\left< f,\mathbf{1} \right> =f$, which is the reduction to the identity matrix.
\end{pro}
\begin{proof}
	 This proof follows from definition of the HGFRFT and the simplification of the identity properties of the DFRFT and the GFRFT.
\end{proof}

\begin{pro}
	(Additivity) The additivity of the DFRFT is expressed as exponential additivity on the fractional order, while the additivity of the HGFRFT can be realized by replacing the matrix by
	\begin{equation}
		\mathcal{F}^{\alpha_{3} ,\beta_{3} }_f=\mathcal{F}^{\alpha_{1} +\alpha_{2} ,\beta_{1} +\beta_{2} }_f=\mathcal{F}^{\alpha_{1} ,\beta_{1} }_f ( \mathcal{F}^{\alpha_{2} ,\beta_{2} }_f )  ,
	\end{equation}
	where $(\alpha_{1}, \beta_{1})$, $(\alpha_{2}, \beta_{2})$ and $(\alpha_{3}, \beta_{3})$ are real-valued pairs, and $\alpha_{1}+\alpha_{2}=\alpha_{3},\beta_{1}+\beta_{2}=\beta_{3}$.
\end{pro}
\begin{proof}
	By definition, we have
	\begin{equation*}
			\begin{aligned}
			\mathcal{F}^{\alpha_{3} ,\beta_{3} }_f &= \left< f, \psi^{\alpha_{3}} \otimes \phi^{\beta_{3}} \right> \\
			&\overset{(a)}{=} \left< f, \left( \psi^{\alpha_{1}} \otimes \phi^{\beta_{1}} \right) \cdot \left( \psi^{\alpha_{2}} \otimes \phi^{\beta_{2}} \right) \right> \\
			&\overset{(b)}{=} \left< f, \psi^{\alpha_{1}} \otimes \phi^{\beta_{1}} \right> \cdot \left< f, \psi^{\alpha_{2}} \otimes \phi^{\beta_{2}} \right> \\
			&= \mathcal{F}^{\alpha_{1} ,\beta_{1} }_f ( \mathcal{F}^{\alpha_{2} ,\beta_{2} }_f ),
		\end{aligned}
	\end{equation*}
where equality $(a)$ follows from the properties of tensor product multiplication, and equality $(b)$ follows from the linearity of the inner product.
\end{proof}

\begin{pro}
	(Commutativity) For two real-valued pairs $(\alpha_{1}, \beta_{1})$ and $(\alpha_{2}, \beta_{2})$, the HGFRFT is commutative such that
	\begin{equation}
		\mathcal{F}^{\alpha_{1} ,\beta_{1} }_f( \mathcal{F}^{\alpha_{2} ,\beta_{2} }_f)  =\mathcal{F}^{\alpha_{2} ,\beta_{2} }_f( \mathcal{F}^{\alpha_{1} ,\beta_{1} }_f)  ,
	\end{equation}
	and the HGFRFT is cross-commutative such that
	\begin{equation}
		\mathcal{F}^{\alpha_{1} ,\beta_{1} }_f( \mathcal{F}^{\alpha_{2} ,\beta_{2} }_f)  =\mathcal{F}^{\alpha_{1} ,\beta_{2} }_f( \mathcal{F}^{\alpha_{2} ,\beta_{1} }_f)  =\mathcal{F}^{\alpha_{2} ,\beta_{1} }_f( \mathcal{F}^{\alpha_{1} ,\beta_{2} }_f) .
	\end{equation}
\end{pro}
\begin{proof}
	This property follows the exponential additive property
	\begin{equation*}
		\mathcal{F}^{\alpha_{1} ,\beta_{1} }_f ( \mathcal{F}^{\alpha_{2} ,\beta_{2} }_f )  =\mathcal{F}^{\alpha_{1} +\alpha_{2} ,\beta_{1} +\beta_{2} }_f=\mathcal{F}^{\alpha_{2} ,\beta_{2} }_f ( \mathcal{F}^{\alpha_{1} ,\beta_{1} }_f).
	\end{equation*} 
	The cross-commutativity property also follows from exponential additivity
	\begin{equation*}
		\begin{aligned}\mathcal{F}^{\alpha_{1} ,\beta_{1} }_f ( \mathcal{F}^{\alpha_{2} ,\beta_{2} }_f )  =&\mathcal{F}^{\alpha_{1} +\alpha_{2} ,\beta_{1} +\beta_{2} }_f \\ =&\mathcal{F}^{\alpha_{1} ,\beta_{2} }_f ( \mathcal{F}^{\alpha_{2} ,\beta_{1} }_f)  \\ =&\mathcal{F}^{\alpha_{2} ,\beta_{1} }_f ( \mathcal{F}^{\alpha_{1} ,\beta_{2} }_f)  ,\end{aligned} 
	\end{equation*} 
	where the last equality is from commutativity.
\end{proof}

\begin{pro}
	(Invertibility) The invertibility of the HGFRFT means that the inverse HGFRFT can be realized by another HGFRFT whose parameter matrix is equal to the inverse of the forward transformation matrix, and the invertibility can be obtained through the abovementioned additivity. For an order pair $(\alpha, \beta)$, we have
	\begin{equation}
		\mathcal{F}^{-\alpha ,-\beta }_f( \mathcal{F}^{\alpha ,\beta }_f)  =\mathcal{F}^{0 ,0}_f=f.
	\end{equation}
\end{pro}
\begin{proof}
	This follows from the exponential additivity and reduction of the HGFRFT identity matrix.
\end{proof}

\begin{remark}
The inverse transform of the HGFRFT can be obtained through its invertibility, as follows
\begin{equation}
		f=( \mathcal{F}^{\alpha ,\beta }_{\hat{f} } )^{-1} \overset{(a)}{=} \mathcal{F}^{-\alpha ,-\beta }_ {\hat{f}}=\left<\hat{f},\psi^{-\alpha}\otimes \phi^{-\beta} \right> ,
\end{equation}
where the equality $(a)$ follows from the invertibility of the tensor product, that is, $(\psi^{\alpha}\otimes \phi^{\beta})^{-1}=(\psi^{-\alpha}\otimes \phi^{-\beta})$.
\end{remark}

\begin{pro}
	(Separability) The HGFRFT is separable on $\mathbb{C}^{N}$ and $\mathcal{H}$; in other words, it is separable on the GFRFT and the DFRFT. That is, for a given order $(\alpha, \beta)$, we have
	\begin{equation}
		\mathcal{F}^{\alpha ,\beta }_f=\mathcal{F}^{0,\beta }_f( \mathcal{F}^{\alpha ,0}_f)  =\mathcal{F}^{\alpha ,0}_f( \mathcal{F}^{0,\beta }_f)  .\label{Separable}
	\end{equation}
\end{pro}
\begin{proof}
The first equality follows from exponential additivity, while the second follows from commutativity. By incorporating Eq. \eqref{FHG}, we can also obtain $\mathcal{F}^{0 ,\beta }_f= \mathcal{G}^{\beta}_f$ and $\mathcal{F}^{\alpha ,0 }_f= \mathcal{H}^{\alpha}_f$. In particular, using reduction from the identity properties of the GFRFT and DFRFT and the definition of the HGFRFT, we obtain Eq. \eqref{Separable} as
	\begin{equation*}
		\begin{aligned}	\mathcal{F}^{\alpha ,\beta }_{f}=&\left< f,\left( \psi^{\alpha } \otimes \mathbf{1}\right)  \left( \mathbf{1}\otimes \phi^{\beta } \right)  \right> \\ 
		=&\left< f,\left( \psi^{\alpha } \otimes \mathbf{1}\right)\right> \cdot\left< f, \left( \mathbf{1}\otimes \phi^{\beta } \right)  \right> \\ 
		=&\mathcal{F}^{\alpha,0}_{f}(\mathcal{F}^{0,\beta}_f)=\mathcal{F}^{0,\beta}_f (\mathcal{F}^{\alpha,0}_{f}) .\end{aligned}
	\end{equation*}
	This clearly demonstrates the separability of the HGFRFT.
\end{proof}

\begin{pro}
	(Reducibility) When $\alpha=\beta=1$, the HGFT becomes a special case of the HGFRFT.
\end{pro}
\begin{proof}
The ordinary transform properties can be simplified to
\begin{equation}
\mathcal{F}^{1,1}_f=\left< f, \psi^{1}\otimes \phi^{1}\right>=\mathcal{F}_{f}(\psi\otimes \phi).
\end{equation}
This also follows the definition of the HGFRFT and the simplification of the general transform properties of the DFRFT and the GFRFT.
\end{proof}

\begin{pro}
	(Unitary) The HGFRFT is a unitary transform, meaning that it preserves the inner product between functions. For any signal $f$ and $g$, and for any $(\alpha,\beta)$
	\begin{equation}
		\langle \mathcal{F}^{\alpha,\beta}_f, \mathcal{F}^{\alpha,\beta}_g \rangle = \langle f, g \rangle.\label{unitary}
	\end{equation}
\end{pro}
\begin{proof}
Let $\psi^{\alpha}$ and $\phi^{\beta}$ be appropriately chosen basis functions, and $\psi^{\alpha} \otimes \phi^{\beta}$ be the tensor product basis functions. To demonstrate that the HGFRFT preserves the inner product, as stated in Eq. \eqref{unitary}, consider
\[
\langle \mathcal{F}^{\alpha,\beta}_f, \mathcal{F}^{\alpha,\beta}_g \rangle = \left< \left< f, \psi^{\alpha} \otimes \phi^{\beta} \right>, \left< g, \psi^{\alpha} \otimes \phi^{\beta} \right> \right>
\]
Given that $\psi^{\alpha} \otimes \phi^{\beta}$ forms an orthonormal basis
\[
\langle \mathcal{F}^{\alpha,\beta}_f, \mathcal{F}^{\alpha,\beta}_g \rangle = \left< f, \left< \psi^{\alpha} \otimes \phi^{\beta}, \psi^{\alpha} \otimes \phi^{\beta} \right> \cdot g \right> = \left< f, g \right>
\]
where $\left< \psi^{\alpha} \otimes \phi^{\beta}, \psi^{\alpha} \otimes \phi^{\beta} \right> = 1$ due to the normalization of the basis functions. This shows that the HGFRFT preserves the inner product between vectors, thus confirming that the HGFRFT is a unitary transformation.
\end{proof}

\begin{remark}
	When the underlying graph is a cyclic graph, the GFRFT simplifies to the DFRFT \cite{cycle}. Consequently, in the generalized GSP framework, one of the domains becomes the time domain, with the adjacency matrix represented as
	\[
	\mathbf{A} = \begin{bmatrix}
		& & & 1 \\
		1 & & & \\
		& \ddots & & \\
		& & 1 & 
	\end{bmatrix}.
	\]
\end{remark}

Then, we have the following proposition.
\begin{proposition}
	If the adjacency matrix $\mathbf{A}$ is similarly constructed as compact self-adjoint operators, then $\mathcal{F}^{\alpha,\beta}_f$ reduces to the two-dimensional DFRFT of orders $\alpha$ and $\beta$ in Hilbert space. \label{pro1}
\end{proposition}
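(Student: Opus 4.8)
The plan is to exploit the Kronecker (tensor) factorization of the HGFRFT recorded in Eq.~\eqref{FHCN}, together with the separability property of Eq.~\eqref{Separable}, and to argue that under the stated hypothesis \emph{both} factors of this product degenerate into one-dimensional DFRFTs, so that their Kronecker product is by definition the two-dimensional DFRFT. First I would recall that $\mathcal{F}^{\alpha,\beta}_{f}=(\mathbf{F}^{\alpha}_{\mathcal{H}}\otimes_{K}\mathbf{F}^{\beta}_{\mathcal{G}})f$, and that by separability this factors through the two partial transforms $\mathcal{H}^{\alpha}_{f}=\mathbf{F}^{\alpha}_{\mathcal{H}}f$ and $\mathcal{G}^{\beta}_{f}=\mathbf{F}^{\beta}_{\mathcal{G}}f$. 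The first factor requires no argument: by its very definition in Eq.~\eqref{DFRFT}, $\mathbf{F}^{\alpha}_{\mathcal{H}}$ is already the $\alpha$th-order DFRFT on $\mathcal{H}$, built from the orthonormal eigenbasis $\Psi$ of the compact self-adjoint operator $\mathbf{B}$ (cf. Assumption~\ref{A2}).

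The substance of the proof lies in the second factor. I would make precise the hypothesis that $\mathbf{A}$ is ``similarly constructed as a compact self-adjoint operator'' to mean that $\mathbf{A}$ is set up on the finite-dimensional space $\mathbb{C}^{N}$ by exactly the spectral recipe that $\mathbf{B}$ realizes on $\mathcal{H}$: a self-adjoint operator furnished with an orthonormal eigenbasis $\Phi$, so that the GFT matrix $\mathbf{V}^{-1}=\mathbf{V}^{\mathrm{H}}$ is unitary and diagonalizable. Under this assumption the GFRFT $\mathbf{F}^{\beta}_{\mathcal{G}}=\mathbf{Q}\mathbf{\Lambda}^{\beta}\mathbf{Q}^{-1}$ of Eq.~\eqref{GFRFT} is generated by fractionally powering a unitary matrix through precisely the construction that defines the DFRFT in Eq.~\eqref{DFRFT}. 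Aligning the two spectral decompositions then shows that $\mathbf{F}^{\beta}_{\mathcal{G}}$ coincides with the $\beta$th-order DFRFT on the finite-dimensional Hilbert space $\mathbb{C}^{N}$; this generalizes the preceding remark, where the cyclic graph forces $\mathbf{V}$ to be the DFT matrix and $\mathbf{F}^{\beta}_{\mathcal{G}}$ to be the ordinary DFRFT.

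Finally I would assemble the two pieces. Having identified $\mathbf{F}^{\alpha}_{\mathcal{H}}$ and $\mathbf{F}^{\beta}_{\mathcal{G}}$ as one-dimensional DFRFTs of orders $\alpha$ and $\beta$, I invoke the standard fact that the two-dimensional DFRFT of orders $(\alpha,\beta)$ is exactly the Kronecker product of two one-dimensional DFRFTs of those orders. Comparing with Eq.~\eqref{FHCN} yields $\mathcal{F}^{\alpha,\beta}_{f}=\mathbf{F}^{\alpha}_{\mathcal{H}}\otimes_{K}\mathbf{F}^{\beta}_{\mathcal{G}}$ acting on $S(\mathcal{H},\mathcal{G})\cong\mathcal{H}\otimes\mathbb{C}^{N}$, which is the asserted two-dimensional DFRFT in Hilbert space.

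The main obstacle is the middle step: pinning down what ``similarly constructed as a compact self-adjoint operator'' demands of $\mathbf{A}$, and verifying that the GFRFT spectral recipe of Eq.~\eqref{GFRFT} and the DFRFT definition of Eq.~\eqref{DFRFT} genuinely produce the same operator on $\mathbb{C}^{N}$, i.e. that the eigenbasis of $\mathbf{A}$ together with the fractional powering of the unitary GFT matrix reproduces the eigenstructure underlying the DFRFT. Once these two decompositions are matched, everything else is bookkeeping that follows from the separability property already established and from the definition of the two-dimensional DFRFT as a Kronecker product.
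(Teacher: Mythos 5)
Your proposal is correct and follows essentially the same route as the paper: the paper likewise observes that when $\mathbf{A}$ is a compact self-adjoint operator its eigenvectors furnish an orthonormal basis $\Psi'$ playing the same role as $\Psi$, so that $\mathcal{F}^{\alpha,\beta}_{f}=\left<f,\psi^{\alpha}\otimes(\psi')^{\beta}\right>$ is by construction the two-dimensional DFRFT. Your version merely restates this in the Kronecker-product form of Eq.~\eqref{FHCN} and makes explicit the step of matching the GFRFT spectral recipe with the DFRFT definition, which the paper leaves implicit.
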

\begin{proof}
	For $\mathcal{F}^{\alpha,\beta}_f = \left< f, \psi^{\alpha} \otimes \phi^{\beta} \right>$, when $ \mathbf{A}$ is a compact self-adjoint operator, let $\Psi'$ be an orthonormal basis of $ \mathcal{H}$ consisting of the eigenvectors of $\mathbf{A}$. Then, $(\Psi')^{\beta} = \{ (\psi')^{\beta}_j \}_{j \geq 1}$, and consequently $\mathcal{F}^{\alpha,\beta}_f = \left< f, \psi^{\alpha} \otimes (\psi')^{\beta} \right>$, which corresponds to the two-dimensional DFRFT in Hilbert space.
\end{proof}
\begin{corollary}
	Since the HGFRFT satisfies the fundamental properties of the GFRFT, if $ \mathcal{F}^{\alpha,\beta}_{f}$ corresponds to the same graph structure as $ \mathcal{G} $ in $ \mathcal{H} $ (i.e., $ \mathbf{B} = \mathbf{A} $), then $ \mathcal{F}^{\alpha,\beta}_{f} $ is a two-dimensional GFRFT of orders $ \alpha $ and $ \beta $ in Hilbert space.
\end{corollary}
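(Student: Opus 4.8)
The plan is to treat this as an immediate consequence of the separability property of the HGFRFT combined with a reinterpretation of the partial transform $\mathcal{H}^{\alpha}_f$ once the Hilbert-space operator satisfies $\mathbf{B} = \mathbf{A}$. The argument parallels the proof of Proposition \ref{pro1}: there the common operator is a generic compact self-adjoint operator, yielding a DFRFT on the $\mathcal{H}$ factor; here the operator is specifically the graph adjacency matrix, so the fractional transform it induces on that factor is a GFRFT instead.

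First I would invoke the separable decomposition from Eq. \eqref{Separable}, namely $\mathcal{F}^{\alpha,\beta}_f = \mathcal{F}^{\alpha,0}_f(\mathcal{F}^{0,\beta}_f)$, together with the identifications $\mathcal{F}^{\alpha,0}_f = \mathcal{H}^{\alpha}_f$ and $\mathcal{F}^{0,\beta}_f = \mathcal{G}^{\beta}_f$ established in the proof of that property. This reduces the claim to characterizing the two partial transforms separately.

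Next I would analyze $\mathcal{H}^{\alpha}_f$ under the hypothesis $\mathbf{B} = \mathbf{A}$. By Assumption \ref{A3}, $\Psi$ is the orthonormal eigenbasis of $\mathbf{B}$; substituting $\mathbf{B} = \mathbf{A}$ makes $\Psi$ coincide with the eigenbasis of a graph shift operator. Therefore the conjugated, rotated basis $(\psi^{\ast}_i)^{\alpha}$ and the operator $\mathbf{F}^{\alpha}_{\mathcal{H}}$ are produced by exactly the spectral-decomposition recipe $\mathbf{Q}\mathbf{\Lambda}^{\alpha}\mathbf{Q}^{-1}$ that defines the GFRFT in Eq. \eqref{GFRFT}. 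Hence $\mathcal{H}^{\alpha}_f$ is a GFRFT of order $\alpha$ rather than a DFRFT, while $\mathcal{G}^{\beta}_f$ is already a GFRFT of order $\beta$ by Eq. \eqref{Gbeta}. The separable composition then exhibits $\mathcal{F}^{\alpha,\beta}_f$ as a genuine two-dimensional GFRFT with orders $\alpha$ and $\beta$, represented in the finite-dimensional case by the Kronecker product $\mathbf{F}^{\alpha}_{\mathcal{G}}\otimes_{K}\mathbf{F}^{\beta}_{\mathcal{G}}$.

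The only subtlety, rather than a genuine obstacle, is confirming that the two GFRFTs act on independent tensor factors so that their composition assembles into a single two-dimensional operator without cross-interference; this is guaranteed by the separability and commutativity properties already proved. No new analytic estimate is needed---the substance of the corollary is the structural identification of $\mathcal{H}^{\alpha}_f$ as a GFRFT, after which the result follows directly from Proposition \ref{pro1} and the established properties.
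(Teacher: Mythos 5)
Your proposal is correct and rests on the same key observation as the paper's proof: once $\mathbf{B}=\mathbf{A}$, the orthonormal basis $\Psi$ on the $\mathcal{H}$ factor is the eigenbasis of the graph shift operator, so the fractional transform it induces is a GFRFT rather than a DFRFT, and the tensor-product structure then yields a two-dimensional GFRFT exactly as in Proposition \ref{pro1}. The separability detour you take via $\mathcal{F}^{\alpha,\beta}_f=\mathcal{F}^{\alpha,0}_f(\mathcal{F}^{0,\beta}_f)$ is harmless extra scaffolding; the paper reaches the same conclusion by directly rewriting the basis as $(\phi')^{\alpha}\otimes\phi^{\beta}$.
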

\begin{proof}
	Following the proof of Proposition \ref{pro1}, $ \mathcal{H} $ is equivalent to $ \mathcal{G} $. Therefore, $ \mathcal{F}^{\alpha,\beta}_{f} = \left< f, (\phi')^{\alpha} \otimes \phi^{\beta} \right> $, where $ (\Phi')^{\alpha} = \{ (\phi')^{\alpha}_i \}_{1 \leq i \leq N} $ is an orthonormal basis of $ \mathbb{C}^{N} $ consisting of the eigenvectors of $ \mathbf{B} $.
\end{proof}

The HGFRFT exhibits properties similar to the HGFT, such as nestability, associativity, invertibility, separability, and unitarity. Additionally, it possesses new characteristics, including zero rotation, additivity, commutativity, and reducibility. In the following, we will illustrate the application of HGFRFT within the generalized GSP framework through examples.

\subsection{Examples}\label{sec3.1}
In this subsection, we present several examples to demonstrate the implementation of our proposed HGFRFT. Specifically, we explore its application in traditional GFRFT contexts, within finite closed intervals, and across both finite-dimensional and infinite-dimensional Hilbert spaces.
\begin{example}
Traditional GFRFT
\end{example}
In this example, we consider $\mathcal{H} = \mathbb{C}$, where $f \in S\left( \mathcal{H}, \mathcal{G}\right)$ represents a traditional graph signal with each vertex $v \in \mathcal{V}$ assigned a complex value $f(v) \in \mathbb{C}$. Here, $f$ is a complex-valued signal defined on the vertex set of graph $\mathcal{G}$. By substituting the GFT matrix $\mathbf{V}^{-1}$ with $\mathbf{Q\Lambda}^{\beta}\mathbf{Q}^{-1}$, we integrate the FRFT into the GSP framework \cite{GFRFT}. Traditional GFRFT examples, such as the path graph and ring graph, are illustrated in Fig. \ref{fig3} of Section \ref{sec6.1}.

\begin{example}
GFRFT in Finite Closed Intervals
\end{example} 
We consider $\mathcal{H} = L^{2}\left( [a,b] \right)$, representing the space of complex-valued $L^{2}$ functions defined on a finite closed interval $[a, b]$ \cite{JFT}. Setting $a = 0$ and $b = 2\pi$, the graph signal $f$ maps $L^{2}$ functions to the vertices of graph $\mathcal{G}$. Thus, $f$ maps from the interval $[0, 2\pi]$ to the vertex set $\mathcal{V}$. For any $v \in \mathcal{V}$, the $\mathcal{H}^{\alpha}$ transform of $f(v)$ is its DFRFT. Similarly, the $\mathcal{G}^{\beta}$ transform of $f(x, \cdot)$ represents the GFRFT of the graph signal. Interpreting $[a, b]$ as time, HGFRFT extends to the joint time-vertex fractional Fourier transform (JFRFT) \cite{JFRFT}. Examples in the time-vertex graph fractional domain are provided in Section \ref{sec6.2}.

\begin{example}
	GFRFT in Finite-Dimensional Hilbert Space
\end{example} 
Consider $\mathcal{H} = L^2(\mathcal{G}^{\prime})$, where $\mathcal{G}^{\prime} = (\mathcal{V}^{\prime}, \mathcal{E}^{\prime})$ is a secondary finite graph with a discrete measure. This space $\mathcal{H}$ contains finite complex graph signals defined on $\mathcal{G}^{\prime}$. To construct the graph structure of the Cartesian product $\mathcal{G}^{\prime} \times \mathcal{G}$, let $(u_1, v_1)$ and $(u_2, v_2)$ be vertex pairs in $\mathcal{V}^{\prime} \times \mathcal{V}$. An edge exists between $(u_1, v_1)$ and $(u_2, v_2)$ if $v_1 = v_2$ (with edge weights from $\mathcal{E}^{\prime}$) or $u_1 = u_2$ (with edge weights from $\mathcal{E}$).

In this setup, $S(\mathcal{H}, \mathcal{G}) = L^2(\mathcal{G}^{\prime} \times \mathcal{G})$, meaning signals in $S(\mathcal{H}, \mathcal{G})$ are defined on the composite graph $\mathcal{G}^{\prime} \times \mathcal{G}$. The HGFRFT then represents the GFRFT of signals on this composite graph. Hence, $S(\mathcal{H}, \mathcal{G})$ can be viewed as a traditional graph signal on the merged graph $\mathcal{G}^{\prime} \times \mathcal{G}$. Details and sampling simulations of this example are presented in Section \ref{sec6.1}.

\begin{example}
	GFRFT in Infinite-Dimensional Hilbert Space \label{examp4}
\end{example} 
Consider continuous signals over the complex domain, where $\mathcal{H} = L^2(\mathbb{C})$ denotes the space of square-integrable functions on the complex plane \cite{Hilbert}. This space is infinite-dimensional, as each point $z \in \mathbb{C}$ can carry information.

Let $f(z)$ be a signal in $L^2(\mathbb{C})$. For example, $f(z) = \exp(-|z|^2)$ is a Gaussian function whose FT is also a Gaussian. To apply the HGFRFT, use basis functions $\psi^\alpha$ and $\phi^\beta$. The HGFRFT is defined as
\[
\hat{f}(z, \lambda) = \int_{-\infty}^{\infty} \int_{\mathcal{V}} f(z') \psi^\alpha(z - z') \phi^\beta(\lambda - \lambda') \, dz \, d\lambda',
\]
where $\psi^\alpha(z - z')$ is the $\mathcal{H}^{\alpha}_{f}$-transform basis function, $\phi^\beta(\lambda - \lambda')$ is the $\mathcal{G}^{\beta}_{f}$-transform basis function, $f(z')$ is the signal defined on the complex plane, and $\lambda$ is the frequency domain variable for the graph signal. This integral form combines the continuous and graph-based components of the signal processing framework.

By mapping the graph $\mathcal{G}$ to the complex plane, the GFRFT can be applied to each vertex of $\mathcal{G}$, allowing analysis of the signal's frequency representation within the graph structure. This approach integrates complex domain signal processing with graph signal processing, enhancing analytical capabilities. The specific examples are provided in Section \ref{sec6.3} and \ref{sec6.4}.

\section{Filtering}
\label{Filter}
Filters fulfill a variety of roles. They are instrumental in eliminating noise from signals, revealing the inherent connections between datasets, converting signals to domains that facilitate simpler analysis, and beyond. In this section, inspired by \cite{HGFT}, we delve into the application of filtering within the HGFRFT framework for generalized graphs. Our approach to filtering in the generalized fractional graph domain parallels the conventional principles of generalized GSP and signal processing over $\mathbb{C}$. However, it also uncovers the more complex structure of the general Hilbert space $\mathcal{H}$, introducing new dimensions and features to our theoretical understanding and practical applications.

The HGFRFT is linearly invertible, hence to verify that the continuous map $\mathbf{L}$ is a filter, we verify that for two graph signals $f, g \in S\left( \mathcal{H}, \mathcal{G}\right) $,
\[
	\mathbf{L}\left( a\mathcal{F}^{\alpha, \beta}_{f} + b\mathcal{F}^{\alpha, \beta}_{g}\right)  = a\mathbf{L}\left( \mathcal{F}^{\alpha, \beta}_{f}\right)  + b\mathbf{L}\left( \mathcal{F}^{\alpha, \beta}_{g}\right) ,
\]
where for all $a,b \in \mathbb{C}$, and since $S\left( \mathcal{G}, \mathcal{H}\right)$ is a Hilbert space, any filter $\mathbf{L}$ is continuous because it is bounded. From isomorphism, we equivalently consider any filter $\mathbf{L}$ on $S\left( \mathcal{H}, \mathcal{G}\right)$ to be a filter on $\mathcal{H} \otimes \mathbb{C}^{N}$.

\subsection{Shift Invariant Filters in the Fractional Domain}
The filters in the fractional domain are similar to the shift invariant filters in generalized graphs. The two operators $\mathbf{A}$ and $\mathbf{B}$ in Section \ref{3.1} have the Kronecker product $\mathbf{B} \otimes_{K} \mathbf{A}$, which is in the Hilbert space $S\left( \mathcal{H}, \mathcal{G}\right) \cong \mathcal{H} \otimes \mathbb{C}^{N}$. Since all operators on the finite dimensional space $\mathbb{C}^{N}$ are compact, $\mathbf{A}$ and $\mathbf{B}$ are both compact and self-adjoint, and $\mathbf{B} \otimes_{K} \mathbf{A}$ is also compact. The lemma for a shift invariant filter is as follows.

\begin{lem}
	Shift invariant and weakly shift invariant.
	\label{lem1}
	\begin{enumerate}
		\item A filter $\mathbf{L}$ is called shift invariant when $\mathbf{A} \circ \mathbf{L} = \mathbf{L} \circ \mathbf{A}$ and $\mathbf{B} \circ \mathbf{L} = \mathbf{L} \circ \mathbf{B}$ are both true for $\mathbf{A}$ and $\mathbf{B}$.
		\item A filter $\mathbf{L}$ is called weakly shift invariant when $\left( \mathbf{B} \otimes_{K} \mathbf{A}\right) \circ \mathbf{L} = \mathbf{L} \circ \left( \mathbf{B} \otimes_{K} \mathbf{A}\right) $ holds.
	\end{enumerate}
\end{lem}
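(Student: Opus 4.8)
The plan is to read the two items of Lemma~\ref{lem1} as a single implication: every shift invariant filter (item~1) is weakly shift invariant (item~2), while the converse fails in general, so that item~2 is genuinely the weaker of the two conditions. The first thing I would do is fix the domains on which the commutation relations are read. Since $\mathbf{L}$ acts on $S(\mathcal{H},\mathcal{G}) \cong \mathcal{H} \otimes \mathbb{C}^{N}$, whereas $\mathbf{A}$ and $\mathbf{B}$ are originally defined on $\mathbb{C}^{N}$ and $\mathcal{H}$ separately, I would interpret the operators appearing in item~1 as their canonical lifts $\mathbf{I}_{\mathcal{H}} \otimes_{K} \mathbf{A}$ and $\mathbf{B} \otimes_{K} \mathbf{I}_{N}$ to the tensor product space. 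This identification is what makes the two statements $\mathbf{A}\circ\mathbf{L}=\mathbf{L}\circ\mathbf{A}$ and $\mathbf{B}\circ\mathbf{L}=\mathbf{L}\circ\mathbf{B}$ meaningful as operator identities on $\mathcal{H}\otimes\mathbb{C}^{N}$.

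Next I would invoke the mixed-product property of the Kronecker product to factor the weak shift operator,
\begin{equation*}
\mathbf{B}\otimes_{K}\mathbf{A}=\left(\mathbf{B}\otimes_{K}\mathbf{I}_{N}\right)\left(\mathbf{I}_{\mathcal{H}}\otimes_{K}\mathbf{A}\right)=\left(\mathbf{I}_{\mathcal{H}}\otimes_{K}\mathbf{A}\right)\left(\mathbf{B}\otimes_{K}\mathbf{I}_{N}\right),
\end{equation*}
so that $\mathbf{B}\otimes_{K}\mathbf{A}$ is a commuting product of the two lifted operators. The core step is then the elementary algebraic fact that a map commuting with two operators commutes with their product: writing $X:=\mathbf{B}\otimes_{K}\mathbf{I}_{N}$ and $Y:=\mathbf{I}_{\mathcal{H}}\otimes_{K}\mathbf{A}$ and assuming $\mathbf{L}X=X\mathbf{L}$ and $\mathbf{L}Y=Y\mathbf{L}$, the chain $\mathbf{L}(XY)=X\mathbf{L}Y=XY\mathbf{L}$ yields exactly $(\mathbf{B}\otimes_{K}\mathbf{A})\circ\mathbf{L}=\mathbf{L}\circ(\mathbf{B}\otimes_{K}\mathbf{A})$, which is item~2. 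Linearity and boundedness of $\mathbf{L}$, noted just before the lemma since $S(\mathcal{H},\mathcal{G})$ is a Hilbert space, guarantee that $\mathbf{L}$ is continuous, so all these compositions are well defined.

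The part needing care, and the main obstacle, is the infinite-dimensional factor $\mathcal{H}$: unlike the finite matrix setting, I must confirm that $\mathbf{B}\otimes_{K}\mathbf{I}_{N}$, $\mathbf{I}_{\mathcal{H}}\otimes_{K}\mathbf{A}$, and $\mathbf{B}\otimes_{K}\mathbf{A}$ are well-defined \emph{bounded} operators on $\mathcal{H}\otimes\mathbb{C}^{N}$ and that the mixed-product identity holds at the operator level rather than only formally. Here I would lean on the discussion preceding the lemma: $\mathbf{A}$ is compact and self-adjoint on $\mathbb{C}^{N}$, $\mathbf{B}$ is compact and self-adjoint on $\mathcal{H}$, and the Kronecker product of compact operators is compact, so the spectral theorem applies and each operator is bounded; the mixed-product property then extends from elementary tensors $x\otimes v$ to all of $\mathcal{H}\otimes\mathbb{C}^{N}$ by density and continuity. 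Finally I would remark that weak shift invariance does not recover item~1, since commuting with the single product $\mathbf{B}\otimes_{K}\mathbf{A}$ constrains $\mathbf{L}$ only on the joint spectrum and not on the two factors separately; this asymmetry justifies the term \emph{weakly} and completes the lemma.
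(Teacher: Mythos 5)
Your proposal is sound, but note that the paper itself offers no proof of Lemma~1 at all: despite being labelled a lemma, both items are pure definitions (``a filter $\mathbf{L}$ is \emph{called} shift invariant when\dots''), and the only substantive claim in the vicinity --- that shift invariance implies weak shift invariance --- is asserted without argument as a ``premise'' at the opening of the proof of Theorem~2. What you have written is therefore a correct proof of exactly that unproved implication. Your two clarifications are both worthwhile and go beyond the paper: first, the commutation relations in item~1 only make sense once $\mathbf{A}$ and $\mathbf{B}$ are read as their canonical lifts $\mathbf{I}_{\mathcal{H}}\otimes_{K}\mathbf{A}$ and $\mathbf{B}\otimes_{K}\mathbf{I}_{N}$ to $\mathcal{H}\otimes\mathbb{C}^{N}$, a point the paper leaves implicit (though it does use $\mathbf{B}\otimes_{K}\mathbf{I}$ and $\mathbf{I}\otimes_{K}\mathbf{A}$ later, in the proof of Theorem~2); second, the factorization $\mathbf{B}\otimes_{K}\mathbf{A}=(\mathbf{B}\otimes_{K}\mathbf{I}_{N})(\mathbf{I}_{\mathcal{H}}\otimes_{K}\mathbf{A})$ together with the elementary chain $\mathbf{L}(XY)=X\mathbf{L}Y=XY\mathbf{L}$ is precisely the missing argument, and your care about boundedness of the tensor factors in the infinite-dimensional case is appropriate (boundedness of $\mathbf{B}\otimes_{K}\mathbf{I}_{N}$ already follows from $\mathbf{B}$ being bounded; compactness is not needed for this step). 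Your closing remark that commuting with the single product constrains $\mathbf{L}$ only on the joint spectrum matches the paper's observation that weakly shift invariant filters need not be shift invariant, and Theorem~2(2) is where the paper supplies the partial converse under self-adjointness. In short: no gap in your argument, but be aware you proved an implication the paper only names.
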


In general, weakly shift invariant filters are not necessarily shift invariant. According to the above lemma, we discuss the relationship between weakly shift invariant filters and shift invariant filters in the fractional domain.

\begin{thm}
	\label{thm2}
	Suppose $\mathbf{L}$ is a filter on $S\left( \mathcal{H}, \mathcal{G}\right)$.
	\begin{enumerate}
		\item If $\Psi^{\alpha} \otimes \Phi^{\beta}$ consists of eigenvectors of $\mathbf{L}$, then $\mathbf{L}$ is shift invariant.
		\item  Suppose $\mathbf{L}$ is self-adjoint; then, $\mathbf{L}$ is weakly shift invariant if and only if $\mathbf{L}$ is shift invariant.
	\end{enumerate}
\end{thm}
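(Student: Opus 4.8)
The plan is to work in the concrete model $S(\mathcal{H},\mathcal{G})\cong\mathcal{H}\otimes\mathbb{C}^{N}$, where a filter $\mathbf{L}$ is a bounded operator, the two shift operators act as $\mathbf{B}\otimes_{K}\mathbf{I}$ and $\mathbf{I}\otimes_{K}\mathbf{A}$, and the weak generator is their product $\mathbf{B}\otimes_{K}\mathbf{A}$. The whole argument rests on one structural observation: the fractional joint basis $\Psi^{\alpha}\otimes\Phi^{\beta}$ is the common orthonormal eigenbasis that simultaneously diagonalizes the two factor operators, so that shift invariance is exactly the statement that $\mathbf{L}$ is diagonal in this basis. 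I would first record this reduction. Since $\Psi^{\alpha}=\{\psi^{\alpha}_{i}\}$ and $\Phi^{\beta}=\{\phi^{\beta}_{j}\}$ are the orthonormal bases adapted to $\mathbf{B}$ and $\mathbf{A}$ (Assumptions \ref{A2}--\ref{A3} together with the construction of $\mathbf{F}^{\alpha}_{\mathcal{H}}$ and $\mathbf{F}^{\beta}_{\mathcal{G}}$), the tensor products $\psi^{\alpha}_{i}\otimes\phi^{\beta}_{j}$ are simultaneous eigenvectors of $\mathbf{B}\otimes_{K}\mathbf{I}$ and $\mathbf{I}\otimes_{K}\mathbf{A}$, and hence of $\mathbf{B}\otimes_{K}\mathbf{A}$.

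For Part 1, suppose every $\psi^{\alpha}_{i}\otimes\phi^{\beta}_{j}$ is an eigenvector of $\mathbf{L}$. Then $\mathbf{L}$, $\mathbf{B}\otimes_{K}\mathbf{I}$ and $\mathbf{I}\otimes_{K}\mathbf{A}$ are all diagonal in the single orthonormal basis $\Psi^{\alpha}\otimes\Phi^{\beta}$, and operators diagonal in a common orthonormal basis commute pairwise. Transporting these commutation relations back through the isomorphism $\varphi$ of Eq.~\eqref{varphi} yields $\mathbf{B}\circ\mathbf{L}=\mathbf{L}\circ\mathbf{B}$ and $\mathbf{A}\circ\mathbf{L}=\mathbf{L}\circ\mathbf{A}$, which is precisely shift invariance in the sense of Lemma \ref{lem1}. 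I would keep this direction short, as it is pure linear algebra once the eigenbasis identification is in place.

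For Part 2, the forward implication is immediate: if $\mathbf{L}$ commutes with $\mathbf{B}$ and with $\mathbf{A}$ separately, then it commutes with $\mathbf{B}\otimes_{K}\mathbf{A}$, so shift invariance implies weak shift invariance without using self-adjointness. The substance is the converse. Here I would use that $\mathbf{L}$ is self-adjoint and that $\mathbf{B}\otimes_{K}\mathbf{A}$ is self-adjoint (since $\mathbf{A}$ and $\mathbf{B}$ are), so two commuting self-adjoint operators are simultaneously diagonalizable; the objective is to show their common eigenbasis can be chosen of product form $\psi^{\alpha}_{i}\otimes\phi^{\beta}_{j}$, which then reduces the claim to Part 1. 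On each eigenspace of $\mathbf{B}\otimes_{K}\mathbf{A}$ the restriction of $\mathbf{L}$ is self-adjoint and may be diagonalized within that eigenspace, and the task is to verify that a product eigenbasis of $\mathbf{L}$ still survives inside the span of the tensors whose eigenvalue products coincide.

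The main obstacle is exactly the degeneracy of the product spectrum in Part 2: when two distinct index pairs satisfy $\lambda_{i}\mu_{j}=\lambda_{i'}\mu_{j'}$, where $\lambda$ and $\mu$ range over the eigenvalues of $\mathbf{B}$ and $\mathbf{A}$, the corresponding eigenspace of $\mathbf{B}\otimes_{K}\mathbf{A}$ is strictly larger than a single tensor line, and commuting with $\mathbf{B}\otimes_{K}\mathbf{A}$ alone does not force $\mathbf{L}$ to be diagonal with respect to the factor operators. The crux is therefore to rule out, or to control, these coincidences---leveraging the injectivity of $\mathbf{B}$ and the structure of the fractional spectra---so that a product eigenbasis of $\mathbf{L}$ persists, after which commutation with each factor, and hence shift invariance via Part 1, follows. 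I expect this eigenvalue-separation and multiplicity analysis, rather than any of the algebraic manipulations, to be where the real work lies.
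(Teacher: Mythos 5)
Your handling of Part~1 and of the forward implication in Part~2 coincides with the paper's own argument: the vectors of $\Psi^{\alpha}\otimes\Phi^{\beta}$ are simultaneous eigenvectors of $\mathbf{B}\otimes_{K}\mathbf{I}$ and $\mathbf{I}\otimes_{K}\mathbf{A}$, an operator diagonal in that common basis commutes with both factors, and commuting with each factor separately trivially yields commutation with $\mathbf{B}\otimes_{K}\mathbf{A}$. Those steps are correct and essentially identical to what the paper does.

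The converse in Part~2 is where your proposal stops, and you are right that this is where all the work lies --- but be aware that the paper does not do that work either: its entire argument is the unjustified sentence that ``the basis $\Psi^{\alpha}\otimes\Phi^{\beta}$ contains the eigenvector of $\mathbf{L}$,'' which is precisely the claim needing proof. More importantly, the degeneracy you flag is not a removable technicality: under the paper's stated hypotheses the implication is actually false. Suppose two index pairs $(i,j)\neq(i',j')$ satisfy $\lambda_{i}\mu_{j}=\lambda_{i'}\mu_{j'}$ with $\lambda_{i}\neq\lambda_{i'}$, where $\lambda$ and $\mu$ denote the eigenvalues of $\mathbf{B}$ and $\mathbf{A}$ (nothing in Assumptions 1--3 rules this out; e.g.\ $\lambda_{i}=-\lambda_{i'}$ and $\mu_{j}=-\mu_{j'}$ occurs already for a single-edge graph paired with a two-dimensional $\mathbf{B}=\operatorname{diag}(1,-1)$, which is compact, self-adjoint, and injective). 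Let $\mathbf{L}$ swap $\psi_{i}\otimes\phi_{j}$ with $\psi_{i'}\otimes\phi_{j'}$ and act as the identity on the orthogonal complement. Then $\mathbf{L}$ is self-adjoint and preserves every eigenspace of $\mathbf{B}\otimes_{K}\mathbf{A}$, hence is weakly shift invariant; yet $\mathbf{L}\,(\mathbf{B}\otimes_{K}\mathbf{I})(\psi_{i}\otimes\phi_{j})=\lambda_{i}\,\psi_{i'}\otimes\phi_{j'}$ while $(\mathbf{B}\otimes_{K}\mathbf{I})\,\mathbf{L}(\psi_{i}\otimes\phi_{j})=\lambda_{i'}\,\psi_{i'}\otimes\phi_{j'}$, so $\mathbf{L}$ is not shift invariant. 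Consequently the ``eigenvalue-separation and multiplicity analysis'' you defer cannot succeed in the stated generality; the correct repair is to add the hypothesis that the products $\lambda_{i}\mu_{j}$ are pairwise distinct, which makes every eigenspace of $\mathbf{B}\otimes_{K}\mathbf{A}$ the line spanned by a single $\psi^{\alpha}_{i}\otimes\phi^{\beta}_{j}$; then any commuting operator (self-adjointness is not even needed) is diagonal in the product basis and Part~1 finishes the proof. Your instinct to isolate this as the crux is sound; the proposal is incomplete only at the point where the theorem itself needs an extra assumption.
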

\begin{proof} Given the premise that if $\mathbf{L}$ is shift invariant, then it is also weakly shift invariant.
	
\begin{enumerate}
\item Since $\Psi^{\alpha} \otimes \Phi^{\beta}$ is also a basis, its vectors are also eigenvectors of $\mathbf{B} \otimes_{K} \mathbf{I}$ and $\mathbf{I} \otimes_{K} \mathbf{A}$, and the shift invariance of $\mathbf{L}$ follows Lemma \ref{lem1}.
\item Suppose $\mathbf{L}$ is weakly shift invariant. The basis $\Psi^{\alpha} \otimes \Phi^{\beta}$ contains the eigenvector of $\mathbf{L}$. By 1), $\mathbf{L}$ is shift invariant, and the necessary information is proved. 
\end{enumerate}
\end{proof}

\subsection{Fractional Order Convolution Filters}
Let a $g \in S\left(\mathcal{H}, \mathcal{G}\right)$. For each $f \in S\left(\mathcal{H}, \mathcal{G}\right)$, their convolution $g \ast f$ is the multiplication of the transform domain and is defined as
\begin{equation}
	\begin{aligned}\mathcal{F}^{\alpha ,\beta }_{g\ast f} =&\mathcal{F}^{\alpha ,\beta }_{g} \cdot \mathcal{F}^{\alpha ,\beta }_{f} ,\\ g\ast f=&\sum_{\psi^{\alpha} \otimes \phi^{\beta}} \mathcal{F}^{\alpha ,\beta }_{g} \mathcal{F}^{\alpha ,\beta }_{f} \cdot \psi^{\alpha}\otimes \phi^{\beta} ,\end{aligned}
\end{equation}
where $\mathcal{F}^{\alpha ,\beta}_{g\ast f}$ is an element of $S\left(\mathcal{H}, \mathcal{G}\right)$ and $g \in S\left(\mathcal{H}, \mathcal{G}\right)$, thus $\sum_{\psi^{\alpha} \otimes \phi^{\beta}} \left| \mathcal{F}^{\alpha ,\beta}_{g} \right|^{2}  <\infty $.

For a convolution filter $g\ast$, $g\ast: S\left( \mathcal{H} ,\mathcal{G} \right)  \rightarrow S\left( \mathcal{H} ,\mathcal{G} \right)$ is a bounded map bounded by $\sup_{\psi^{\alpha} \otimes \phi^{\beta}} \left| \mathcal{F}^{\alpha ,\beta}_{g} \right|  <\infty $. It is easy to prove that this satisfies
\begin{equation}
	g\ast (cf+h)=cg\ast f+g\ast h,\  \text{for} \  c\in \mathbb{C},\  h\in S\left( \mathcal{H} ,\mathcal{G} \right).
\end{equation}
When $\mathcal{H} = \mathbb{C}$, the concept of convolution filters is consistent with traditional GSP.

When $f =\psi^{\alpha}\otimes \phi^{\beta}$, by definition $g \ast f = \mathcal{F}^{\alpha ,\beta}_{g}\cdot \psi^{\alpha}\otimes \phi^{\beta}$. Thus $\psi^{\alpha}\otimes \phi^{\beta}$ is the eigenvector of $g\ast$ with eigenvalue $\mathcal{F}^{\alpha ,\beta}_{g}$. By Theorem \ref{thm2}. 1), $g\ast$ is a shift invariant operator. Furthermore, since $\sum_{\psi^{\alpha}\otimes \phi^{\beta}} \left| \mathcal{F}^{\alpha ,\beta}_{g} \right|^{2} <\infty$, $g\ast$ is also the Hilbert‒Schmidt operator \cite{Hilbert}. It is also compact and the limit of finite rank filters.

If $\mathcal{H}$ is infinite dimensional, it is noncompact and thus not a convolution filter. This is different from traditional GSP, for which when $\mathbf{A}$ has no repeated eigenvalues, all shift-invariant filters are convolution filters, because in traditional GSP, the shift invariant filter is a polynomial of $\mathbf{A}$ \cite{GFTadjacency1} and $\mathcal{H} = \mathbb{C}$ is finite dimensional.

\subsection{$\alpha, \beta$-Bandlimited Signals and Bandpass Filters}
For $\psi^{\alpha}$ and $\phi^{\beta}$, we use $\mathbf{\Lambda_{B}}$ and $\mathbf{\Lambda_{A}}$ to denote their corresponding eigenvalues. We can obtain the following definition.
\begin{defn}
	\label{defn2}
	For $f \in S\left( \mathcal{H}, \mathcal{G}\right) $, the frequency range of $f$ is defined as
	\[
\left\{ (\mathbf{\Lambda}_{\mathbf{B}}^{-1} ,\mathbf{\Lambda_{A} } )\in \mathbb{R} \times \mathbb{R} :\mathcal{F}^{\alpha ,\beta }_{f} \neq 0\right\}  ,
	\]
where $\mathbf{\Lambda_{B} }$ and $\mathbf{\Lambda_{A} }$ are diagonal matrices composed of the eigenvalues of matrices $\mathbf{B}$ and $\mathbf{A}$, respectively.
	
\end{defn}
We use $\mathbf{\Lambda}_{\mathbf{B}}^{-1}$ in Definition \ref{defn2}, which is more convenient when handling bandlimitedness. 

A signal $f \in S\left( \mathcal{H} ,\mathcal{G} \right)$ is said to be $\alpha, \beta$-bandlimited if its frequency range is a bounded subset of $\mathbb{R}\times \mathbb{R}$. $S_{K}\left( \mathcal{H} ,\mathcal{G} \right)$ represents a set of signals whose frequency range is within $K$, where $K \subset \mathbb{R} \times \mathbb{ R}$. If $\mathcal{G}$ is a point and $\mathcal{H} = L^{2}([a, b])$, the notion of bandwidth limitation is consistent with its classic counterpart in the fractional Fourier series setting.

\begin{lem}
	\label{lem2}
	For a bandlimited signal $f \in S\left( \mathcal{H} ,\mathcal{G} \right)$, and $K \subset \mathbb{R} \times \mathbb{R}$,
	\begin{enumerate}
		\item $f$ is bandlimited if and only if its frequency range is a finite set. 
		\item If $K$ is bounded, then $S_{K}\left( \mathcal{H} ,\mathcal{G} \right)$ is a finite-dimensional subspace of $S\left( \mathcal{H} ,\mathcal{G} \right)$.
	\end{enumerate}
\end{lem}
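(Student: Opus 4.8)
The plan is to exploit the asymmetry between the two factor spaces: $\mathbf{A}$ acts on the finite-dimensional $\mathbb{C}^{N}$ and hence contributes at most $N$ distinct eigenvalues to any frequency pair, whereas $\mathbf{B}$ is the only possible source of infinitely many spectral values. First I would invoke the spectral theorem for $\mathbf{B}$ (cf. Assumption \ref{A2}): since $\mathbf{B}$ is compact, self-adjoint, and injective, its eigenvalues $\{\lambda_i\}_{i\geq 1}$ are all nonzero and can accumulate only at $0$, so the reciprocals $\lambda_i^{-1}$ form a sequence diverging to $\infty$, with each distinct value attained only finitely often (nonzero eigenvalues of a compact operator have finite multiplicity). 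The entries of $\mathbf{\Lambda_{A}}$, by contrast, are confined to the finite eigenvalue set of $\mathbf{A}$. Consequently every frequency pair occurring in the range of an $f$ lies in the product $\{\lambda_i^{-1}\}_{i\geq 1} \times \{\text{eigenvalues of } \mathbf{A}\}$, in which only the first coordinate can escape to infinity.

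For part 1, the implication ``finite $\Rightarrow$ bandlimited'' is immediate, since a finite subset of $\mathbb{R}\times\mathbb{R}$ is bounded. For the converse I would argue that if the frequency range is bounded, then its projection onto the $\mathbf{\Lambda_{B}}^{-1}$ axis is a bounded subset of the divergent sequence $\{\lambda_i^{-1}\}$, which can contain only finitely many distinct terms; pairing these with the finitely many admissible values of $\mathbf{\Lambda_{A}}$ yields a finite frequency range. For part 2, applying the same divergence argument to the fixed bounded set $K$ shows that $K \cap \bigl(\{\lambda_i^{-1}\}_{i\geq 1} \times \{\text{eigenvalues of } \mathbf{A}\}\bigr)$ is a finite collection of eigenvalue pairs, indexed by a finite set $J$ of basis elements $\psi^{\alpha}_{i} \otimes \phi^{\beta}_{j}$. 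Any $f \in S_{K}(\mathcal{H},\mathcal{G})$ has $\mathcal{F}^{\alpha,\beta}_{f} = 0$ off $K$, so its inverse HGFRFT expansion $f = \sum_{\psi^{\alpha}\otimes\phi^{\beta}} \mathcal{F}^{\alpha,\beta}_{f} \cdot \psi^{\alpha}\otimes\phi^{\beta}$ collapses to a finite sum over $J$. This exhibits every such $f$ as a linear combination of finitely many basis vectors, so $S_{K}(\mathcal{H},\mathcal{G}) \subseteq \mathrm{span}\{\psi^{\alpha}_{i}\otimes\phi^{\beta}_{j} : (i,j)\in J\}$; linearity of the HGFRFT makes $S_{K}(\mathcal{H},\mathcal{G})$ a subspace, and its dimension is at most $|J| < \infty$.

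The main obstacle is concentrated in the single spectral fact that the $\lambda_i^{-1}$ diverge: this is what converts the topological hypothesis of boundedness into the combinatorial conclusion of finiteness, and it is precisely why injectivity of $\mathbf{B}$ (guaranteeing $\lambda_i \neq 0$, so the reciprocals are defined) is indispensable. A secondary point requiring care is repeated eigenvalues, since several basis indices may share one frequency pair; this only enlarges $|J|$ by a bounded factor, at most the product of the (finite) multiplicities in $\mathbf{A}$ and $\mathbf{B}$, so the finite-dimensionality conclusion is unaffected.
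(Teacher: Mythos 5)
Your proof is correct. The paper itself states Lemma \ref{lem2} without proof (it is inherited from the HGFT framework of Ji and Tay), so there is no in-paper argument to compare against, but your reasoning is exactly the standard one that the cited framework uses and that the surrounding text presupposes: compactness of $\mathbf{B}$ forces its nonzero eigenvalues to accumulate only at $0$ with finite multiplicities, injectivity excludes $0$ from the spectrum so the reciprocals $\lambda_i^{-1}$ are defined and diverge, and $\mathbf{A}$ contributes only finitely many values, so boundedness of the frequency range (or of $K$) collapses to finiteness, and the inverse HGFRFT expansion then exhibits $S_{K}(\mathcal{H},\mathcal{G})$ as the span of finitely many basis vectors $\psi^{\alpha}_{i}\otimes\phi^{\beta}_{j}$. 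You also correctly flag the two points where the argument could silently fail --- the indispensability of injectivity for Definition \ref{defn2} to make sense, and the finite-multiplicity issue when several basis indices share a frequency pair --- so nothing is missing.
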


For each $f \in S\left( \mathcal{H} ,\mathcal{G} \right)$ and $K \subset \mathbb{R} \times \mathbb{R}$, we define an $\alpha, \beta$-bandpass filter as the projection
\begin{equation}
	P_{K}\left( f\right)  =\sum_{\left(\mathbf{\Lambda}_{\mathbf{B}}^{-1},\mathbf{\Lambda_{A}} \right)  \in K} \mathcal{F}^{\alpha,\beta}_{f}\cdot \psi^{\alpha}\otimes \phi^{\beta}.
\end{equation}

We can also write this projection as follows
\begin{equation}
P_{K}\left( f\right)  = \left< \left< f, \psi^{\alpha} \otimes \phi^{\beta} \right> \cdot \delta_{K}, (\psi^{-\alpha} \otimes \phi^{-\beta}) \right>,
\end{equation}
where \(\delta_{K}\) is the frequency selection vector used to select the joint frequency component. If the frequency domain of the signal falls within the $\alpha,\beta$-bandpass, $\delta_{K}=1$; otherwise, $0$.

In the HGFRFT, $P_{K}$ is simply multiplied by the eigenfunction of $K$, and if $K$ is unbounded, then the bandpass filter is not a convolution. Therefore, similar to the properties of $P_{K}$ in generalized graphs, the following theorem exists.

\begin{thm}
	$P_{K}$ is shift invariant; it is a convolution filter if and only if $K$ is bounded for any $K \subset \mathbb{R} \times \mathbb{R}$.
\end{thm}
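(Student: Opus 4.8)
The plan is to handle the two assertions separately, leaning on Theorem~\ref{thm2} for shift invariance and on the compactness characterization of convolution filters together with Lemma~\ref{lem2} for the bounded-$K$ equivalence.

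For shift invariance, I would first read off from the definition of $P_K$ that each basis element is mapped either to itself (when its eigenvalue pair $(\mathbf{\Lambda}_{\mathbf{B}}^{-1},\mathbf{\Lambda}_{\mathbf{A}})$ lies in $K$) or to zero (otherwise), so that $P_K(\psi^{\alpha}\otimes\phi^{\beta}) = \delta_K \cdot \psi^{\alpha}\otimes\phi^{\beta}$. Thus every $\psi^{\alpha}\otimes\phi^{\beta}$ is an eigenvector of $P_K$ with eigenvalue $\delta_K\in\{0,1\}$, and since $\Psi^{\alpha}\otimes\Phi^{\beta}$ therefore consists of eigenvectors of $P_K$, Theorem~\ref{thm2}.1) immediately gives that $P_K$ is shift invariant, regardless of whether $K$ is bounded.

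For the convolution-filter equivalence, the key fact established earlier is that a convolution operator $g\ast$ has the $\psi^{\alpha}\otimes\phi^{\beta}$ as eigenvectors with eigenvalues $\mathcal{F}^{\alpha,\beta}_{g}$, belongs to $S(\mathcal{H},\mathcal{G})$ exactly when $\sum_{\psi^{\alpha}\otimes\phi^{\beta}}|\mathcal{F}^{\alpha,\beta}_{g}|^{2}<\infty$, and in that case is Hilbert--Schmidt and hence compact. For the ``if'' direction I would suppose $K$ is bounded; by Lemma~\ref{lem2} the subspace $S_K(\mathcal{H},\mathcal{G})$ is then finite-dimensional, so only finitely many basis elements carry eigenvalue pairs in $K$. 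Constructing $g$ through $\mathcal{F}^{\alpha,\beta}_{g}=\delta_K$, the sum $\sum|\delta_K|^{2}$ equals the finite number of in-band basis elements, whence $g\in S(\mathcal{H},\mathcal{G})$ and $P_K=g\ast$, so $P_K$ is a convolution filter. For the ``only if'' direction I would argue by contraposition: if $K$ is unbounded, then, because $\mathbf{B}$ is compact its eigenvalues accumulate only at $0$ and $\mathbf{\Lambda}_{\mathbf{B}}^{-1}$ is unbounded, so an unbounded $K$ selects infinitely many basis elements and $P_K$ is the orthogonal projection onto an infinite-dimensional subspace; such a projection is not compact, while every convolution filter is compact, so $P_K$ cannot be a convolution filter.

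I expect the main obstacle to be the ``only if'' direction, specifically making rigorous that an unbounded $K$ forces an infinite-dimensional in-band subspace. This hinges on interpreting the $\mathbf{\Lambda}_{\mathbf{B}}^{-1}$ coordinate correctly: boundedness of $K$ must be translated, via the spectral decay of the compact operator $\mathbf{B}$, into finiteness of the in-band index set (the content of Lemma~\ref{lem2}), after which the standard fact that a projection onto an infinite-dimensional closed subspace is noncompact closes the argument.
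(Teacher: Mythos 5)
Your proof is correct and follows essentially the same route as the paper's: shift invariance via the eigenvector criterion of Theorem~\ref{thm2}.1), and the convolution equivalence via Lemma~\ref{lem2} together with the fact that a convolution filter is Hilbert--Schmidt (hence compact) while the orthogonal projection onto an infinite-dimensional subspace is not. The one caveat --- which you correctly flag as the main obstacle, and which the paper's own terse proof shares --- is that in the ``only if'' direction one must read ``$K$ unbounded'' as ``$K$ meets the discrete joint spectrum $\left\{ \left(\mathbf{\Lambda}_{\mathbf{B}}^{-1},\mathbf{\Lambda}_{\mathbf{A}}\right)\right\}$ in an infinite set,'' since an unbounded $K$ that avoids all but finitely many of the actual eigenvalue pairs would still give a finite-rank, hence convolution, projection.
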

\begin{proof}
$\psi^{\alpha}\otimes \phi^{\beta}$ is the eigenvector of $P_{K}$, it is shift invariant by Theorem \ref{thm2}. 1). Furthermore, it is a convolution filter if and only if the eigenfunction of $K$ is in the discrete set $\left\{ \left(\mathbf{\Lambda}_{\mathbf{B}}^{-1}, \mathbf{\Lambda_{A} } \right)  \right\}  $. In other words, $K$ is finite by Lemma \ref{lem2}. 1), and $K$ is finite if and only if it is bounded.
\end{proof}

Building on the discussions in this section, we have introduced several practical scenarios involving finite-dimensional subspaces of $S(\mathcal{H}, \mathcal{G})$ to set the stage for the sampling discourse in the subsequent section. This forthcoming discussion will elaborate on employing a set of points within $\Omega \times \mathcal{V}$ to characterize these subspaces.

\section{Sampling}
In this section, we assume $\Omega$ to be a compact subset of $\mathbb{C}^{N}$ whose interior is nonempty and connected, and $\mathcal{H} = L^{2}\left( \Omega \right)$ is equipped with the usual Lebesgue measure \cite{Functional}. Furthermore, we assume that the eigenvector $\Psi^{\alpha}$ is piecewise smooth on $\Omega$, and each $f \in S\left( \mathcal{H} ,\mathcal {G} \right) \cong \mathcal{H}\otimes \mathbb{C}^{N}$ can be viewed as a function on  $\Omega \times \mathcal{V}$.

\subsection{Graph Fractional Sampling in Hilbert Space}
In this subsection, we propose a theory of graph fractional sampling in Hilbert space based on the GFRFT sampling \cite{GFRFTsampling}. Joint sampling in the Hilbert space and vertex domains can be challenging, especially in scenarios such as sensor networks, social networks, and radar networks. Different graph samples may be required at different time vertices.

Due to the inherent complexity of sampling in infinite-dimensional Hilbert spaces, it is often necessary to discretize or approximate the Hilbert space appropriately. A common approach is to represent the infinite-dimensional Hilbert space in a finite-dimensional form using an orthonormal basis. Subsequently, the sampling operator is applied to the finite-dimensional representation of the Hilbert space signal.

Suppose $f \in \text{span}(\overline{\Psi}^{\alpha} \otimes \overline{\Phi}^{\beta})$ is bandlimited with $\overline{\Psi}^{\alpha}$ and $\overline{\Phi}^{\beta}$ being finite subsets of $\Psi^{\alpha}$ and $\Phi^{\beta}$ respectively. Select a sampling set $\mathcal{W} \subset \Omega \times \mathcal{V}$ such that $f$ is uniquely determined by $\mathcal{W}$. Define the sampling graph signal $\tilde{f}(\mathcal{W}) \in \mathbb{C}^{|\mathcal{W}|}$, with $\left| \mathcal{W}\right|  \geq |  \overline{\Psi}^{\alpha  }|  \cdot | \overline{\Phi}^{\beta  } |$, such that $\tilde{f} = \mathbf{D}f$. The sampling matrix is defined as
\begin{equation}
	\mathrm{\mathbf{D}}_{i,j}  = \left\{ \begin{array}{rl}
		1, & j=\mathcal{W}_i, \\
		0, & \mathrm{otherwise}.
	\end{array} \right.
\end{equation}

We then recover $f$ from $\tilde{f}$ using the recovery operator $\mathbf{R}$, which is a linear mapping from $\mathbb{C}^{|\mathcal{W}|}$ to $\mathbb{C}^{|\Psi^{\alpha}| \cdot |\Phi^{\beta}|}$. Prior to this, we analyzed the bandwidth, establishing connections between $\mathcal{H}$, $\mathcal{G}$, and $S(\mathcal{H}, \mathcal{G})$. According to the projected bandwidth in Definition \ref{defn2} and Lemma \ref{lem2}, a signal $f$ is termed a synchronous bandlimited signal if the projected bandwidths satisfy $|\overline{\Psi}^{\alpha}| < |\Psi^{\alpha}|$ and $|\overline{\Phi}^{\beta}| < |\Phi^{\beta}|$. Clearly, the relationship between the projected and general bandwidths is given by $\max (|\overline{\Psi}^{\alpha}|, |\overline{\Phi}^{\beta}|) \leq K \leq |\overline{\Psi}^{\alpha}| \cdot |\overline{\Phi}^{\beta}|$. The condition for perfect reconstruction of $f $ from $\tilde{f}$ is detailed in Theorem \ref{thm3}.
\begin{thm}\label{thm3}
	For all $\alpha, \beta$-bandlimited graph signals $f$ with bandwidth $K$, where $K = |\overline{\Psi}^{\alpha}| \cdot |\overline{\Phi}^{\beta}|$ and $\mathrm{rank}\{\mathbf{D}(\overline{\Psi}^{\alpha} \otimes \overline{\Phi}^{\beta})\} = K$ (with $K < |\Psi^{\alpha}| \cdot |\Phi^{\beta}|$), perfect recovery of $f$ is achievable. Specifically, if $f \in \text{span}(\overline{\Psi}^{\alpha} \otimes \overline{\Phi}^{\beta})$, then $f$ can be perfectly recovered as $f = \mathbf{R} \tilde{f}$ by choosing the reconstruction operator $\mathbf{R} = (\overline{\Psi}^{\alpha} \otimes \overline{\Phi}^{\beta}) \{\mathbf{D}(\overline{\Psi}^{\alpha} \otimes \overline{\Phi}^{\beta})\}^{\dag}$. Here, $\overline{\Psi}^{\alpha} \otimes \overline{\Phi}^{\beta}$ represents a finite-dimensional approximation of $\Psi^{\alpha} \otimes \Phi^{\beta}$.
\end{thm}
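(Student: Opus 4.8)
The plan is to reduce the statement to a finite-dimensional linear-algebra argument by exploiting the bandlimited structure, and then to show that the stated reconstruction operator is a left inverse of the sampling map on the relevant subspace. First I would invoke Lemma \ref{lem2}: since $f$ is $\alpha,\beta$-bandlimited with frequency range contained in the finite index set determined by $\overline{\Psi}^{\alpha}$ and $\overline{\Phi}^{\beta}$, the signal lies in the finite-dimensional subspace $S_{K}(\mathcal{H},\mathcal{G})$ of dimension $K=|\overline{\Psi}^{\alpha}|\cdot|\overline{\Phi}^{\beta}|$. Writing the synthesis matrix $\Theta := \overline{\Psi}^{\alpha}\otimes\overline{\Phi}^{\beta}$, whose columns are the bandlimited basis elements $\psi^{\alpha}_{i}\otimes\phi^{\beta}_{j}$, I can express $f=\Theta c$ for a unique coefficient vector $c\in\mathbb{C}^{K}$, because by Assumption \ref{A3} and the preservation of orthonormality under conjugation and fractional rotation these columns are linearly independent.

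Next I would apply the sampling operator to obtain $\tilde{f}=\mathbf{D}f=\mathbf{D}\Theta c$, and abbreviate $M:=\mathbf{D}\Theta=\mathbf{D}(\overline{\Psi}^{\alpha}\otimes\overline{\Phi}^{\beta})$, a matrix of size $|\mathcal{W}|\times K$. The central hypothesis $\mathrm{rank}(M)=K$ forces $|\mathcal{W}|\ge K$ and makes $M$ of full column rank, hence left-invertible. The key step is then the identity $M^{\dag}M=\mathbf{I}_{K}$: for a full-column-rank matrix the Moore--Penrose pseudoinverse is $M^{\dag}=(M^{\mathrm{H}}M)^{-1}M^{\mathrm{H}}$, where $M^{\mathrm{H}}M$ is invertible precisely because the columns of $M$ are independent, and a direct computation gives $M^{\dag}M=(M^{\mathrm{H}}M)^{-1}(M^{\mathrm{H}}M)=\mathbf{I}_{K}$. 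Consequently $M^{\dag}\tilde{f}=M^{\dag}Mc=c$, so the coefficient vector is recovered exactly from the samples.

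Finally, I would reconstruct the signal by synthesis: $f=\Theta c=\Theta M^{\dag}\tilde{f}=\mathbf{R}\tilde{f}$, which identifies the reconstruction operator as $\mathbf{R}=\Theta M^{\dag}=(\overline{\Psi}^{\alpha}\otimes\overline{\Phi}^{\beta})\{\mathbf{D}(\overline{\Psi}^{\alpha}\otimes\overline{\Phi}^{\beta})\}^{\dag}$, exactly as claimed, and establishes that recovery is perfect for every $f\in\mathrm{span}(\overline{\Psi}^{\alpha}\otimes\overline{\Phi}^{\beta})$.

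I expect the main obstacle to be not the pseudoinverse algebra—which is routine once full column rank is in hand—but the careful bookkeeping of the finite-dimensional reduction: one must justify via Lemma \ref{lem2} that the bandlimited $f$ genuinely has only finitely many nonzero HGFRFT coefficients, so that $\Theta$ is an honest finite matrix and the Kronecker/tensor structure of $\overline{\Psi}^{\alpha}\otimes\overline{\Phi}^{\beta}$ is compatible with the sampling operator $\mathbf{D}$ acting on $\Omega\times\mathcal{V}$. A secondary point to verify is that the rank condition $\mathrm{rank}\{\mathbf{D}(\overline{\Psi}^{\alpha}\otimes\overline{\Phi}^{\beta})\}=K$ is exactly the non-degeneracy ensuring $\mathbf{D}$ is injective on $S_{K}(\mathcal{H},\mathcal{G})$, so that no two distinct bandlimited signals produce identical samples.
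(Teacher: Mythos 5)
Your proposal is correct and follows essentially the same route as the paper: both reduce to the finite-dimensional synthesis matrix $\Theta=\overline{\Psi}^{\alpha}\otimes\overline{\Phi}^{\beta}$, use the rank-$K$ hypothesis to make $\mathbf{D}\Theta$ full column rank, and exploit the Moore--Penrose pseudoinverse to recover $f$ on $\mathrm{span}(\Theta)$. If anything, your explicit computation of $\{\mathbf{D}\Theta\}^{\dag}\mathbf{D}\Theta=\mathbf{I}_{K}$ is slightly more direct than the paper's argument via the idempotency of $\mathbf{RD}$, which uses the same identity implicitly.
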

\begin{proof}
	Let $\mathbf{P} = \{\mathbf{D}(\overline{\Psi}^{\alpha} \otimes \overline{\Phi}^{\beta})\}^{\dag}$. Given that $\mathrm{rank}\{\mathbf{D}(\overline{\Psi}^{\alpha} \otimes \overline{\Phi}^{\beta})\} = K$ and $\mathrm{rank}\{\mathbf{PD}(\overline{\Psi}^{\alpha} \otimes \overline{\Phi}^{\beta})\} = K$, it follows that $\mathrm{rank}(\mathbf{P}) = K$. Therefore, $\mathbf{R} \in \text{span}(\overline{\Psi}^{\alpha} \otimes \overline{\Phi}^{\beta})$. Moreover, since $\mathbf{RDRD} = (\overline{\Psi}^{\alpha} \otimes \overline{\Phi}^{\beta}) \mathbf{PD}(\overline{\Psi}^{\alpha} \otimes \overline{\Phi}^{\beta}) \mathbf{PD} = (\overline{\Psi}^{\alpha} \otimes \overline{\Phi}^{\beta}) \mathbf{PD} = \mathbf{RD}$, $\mathbf{RD}$ is a projection operator. Consequently, $\mathbf{R} \tilde{f}$ serves as an approximation of $f \in \text{span}(\overline{\Psi}^{\alpha} \otimes \overline{\Phi}^{\beta})$. When $f$ is in $\text{span}(\overline{\Psi}^{\alpha} \otimes \overline{\Phi}^{\beta})$, $\mathbf{R} \tilde{f} = f$, thereby achieving perfect recovery.
\end{proof}

When $|\mathcal{W}| < K$, it follows that $\mathrm{rank}\{\mathbf{PD}(\overline{\Psi}^{\alpha} \otimes \overline{\Phi}^{\beta})\} \leq \mathrm{rank}(\mathbf{P}) \leq |\mathcal{W}| < K$. Thus, $\mathbf{PD}(\overline{\Psi}^{\alpha} \otimes \overline{\Phi}^{\beta})$ cannot be an identity matrix, making perfect recovery of the original signal impossible. If $|\mathcal{S}| = |\mathcal{F}|$, for $\mathbf{PD}(\overline{\Psi}^{\alpha} \otimes \overline{\Phi}^{\beta})$ to be the identity matrix, $\mathbf{P}$ must be the inverse of $\mathbf{D}(\overline{\Psi}^{\alpha} \otimes \overline{\Phi}^{\beta})$. When $|\mathcal{W}| > K$, $\mathbf{P}$ acts as the pseudo-inverse of $\mathbf{D}(\overline{\Psi}^{\alpha} \otimes \overline{\Phi}^{\beta})$, and perfect recovery is achievable if $|\mathcal{W}| \geq K$. For simplicity, we focus on the case where $|\mathcal{W}| = K$.

\subsection{Optimal Sampling in the Hilbert Space}
Because we aim to choose the best set that minimizes the influence of noise, and there are multiple choices of $K = |\overline{\Psi}^{\alpha}| \cdot |\overline{\Phi}^{\beta}|$ linearly independent rows in $\overline{\Psi}^{\alpha} \otimes \overline{\Phi}^{\beta}$, the noise $n$ introduced by sampling is considered
\[
\tilde{f}=\mathbf{D}f+n.
\]
Therefore, the recovered signal $f_{\text{rec}}$ will be
\[
f_{\text{rec}} =\mathbf{R} \tilde{f}=\mathbf{RD}f+\mathbf{R} n
=f+\mathbf{R} n.
\]
By incorporating the original signal $f$, the bound of the recovery error $\epsilon$ is
\[
\begin{array}{rl}
	||f_{\text{rec}}-f||_2 &= ||\mathbf{R}n||_2 = ||(\overline{\Psi}^{\alpha} \otimes \overline{\Phi}^{\beta}) \{\mathbf{D}(\overline{\Psi}^{\alpha} \otimes \overline{\Phi}^{\beta})\}^{\dag}n||_2 \\
	&\leq ||\overline{\Psi}^{\alpha} \otimes \overline{\Phi}^{\beta}||_2 ||\{\mathbf{D}(\overline{\Psi}^{\alpha} \otimes \overline{\Phi}^{\beta})\}^{\dag}||_2||n||_2.
\end{array}
\]

Since $||\overline{\Psi}^{\alpha} \otimes \overline{\Phi}^{\beta}||_2$ and $||n||_2$ are fixed, we aim to minimize $||\{\mathbf{D}(\overline{\Psi}^{\alpha} \otimes \overline{\Phi}^{\beta})\}^{\dag}||_2$. For each qualified $\mathbf{D}$, we maximize the $\mathbf{D}(\overline{\Psi}^{\alpha} \otimes \overline{\Phi}^{\beta})$ minimum singular value $\sigma_{\min}$
\begin{equation}
	\mathbf{D}^{\text{opt}}=\arg\max_\mathbf{D}~\sigma_\mathrm{min}\{\mathbf{D}(\overline{\Psi}^{\alpha} \otimes \overline{\Phi}^{\beta})\}.\label{Psiopt}
\end{equation}

Because Eq. \eqref{Psiopt} achieves the maximum robustness to noise, it is called the optimal sampling operator. The optimization problem can be solved using the greedy algorithm \cite{GFTsampling,GFRFTsampling,GLCTsampling}, as shown in Algorithm \ref{alg1}.

\begin{algorithm}[H]
		\footnotesize
	\caption{\footnotesize{Optimal Sampling Operator via Greedy Algorithm}}\label{alg1}
	\begin{algorithmic}
		\STATE 
	\STATE \textbf{Input}
		\STATE \hspace{1cm}$\overline{\Psi}^{\alpha} \otimes \overline{\Phi}^{\beta}$: the finite-dimensional approximation of $\Psi^{\alpha} \otimes \Phi^{\beta}$
		\STATE \hspace{1cm}$|\mathcal{W}|$: the number of samples 
		\STATE \hspace{1cm}$|\Psi^{\alpha}| \cdot |\Phi^{\beta}|$: the number of rows in $\overline{\Psi}^{\alpha} \otimes \overline{\Phi}^{\beta}$
	\STATE \textbf{Output}
		\STATE \hspace{1cm}$\mathcal{W}$ sampling set 
	\STATE	\textbf{Function} 
		\STATE 
		\STATE \hspace{1cm}$\begin{array}{l}
			\textbf{for~}i = 1:|\mathcal{W}| \\
			\quad w=\arg\max_{j\in \{1:|\Psi^{\alpha}| \cdot |\Phi^{\beta}|\}-\mathcal{W}} \sigma_\mathrm{min} \left((\overline{\Psi}^{\alpha} \otimes \overline{\Phi}^{\beta})_{\mathcal{W}+\{j\}}\right) \\
			\quad \mathcal{W} = \mathcal{W} +\{w\} \\
			\textbf{end} \\
			\textbf{return}~ \mathcal{W}
		\end{array}$
	\end{algorithmic}
\end{algorithm}

Once the optimal sampling operator is determined, signal reconstruction can proceed. For all sampling experiments, the parameters $(\alpha, \beta)$ are treated as hyperparameters and selected via grid search \cite{JFRFT, JFRFTwiener}. Since HGFRFT is additive and continuous with respect to the transformation orders, we initially perform the search with larger step sizes to cover a broader range, followed by a finer search around the optimal values. The detailed procedure is outlined in Algorithm \ref{alg2} and applied in Section \ref{sec6.3.2}.

\begin{algorithm}[H]
		\footnotesize
	\caption{\footnotesize{Grid Search for Optimal $(\alpha, \beta)$ Parameters}}\label{alg2}
	\begin{algorithmic}
		\STATE \textbf{Input}
		\STATE \hspace{1cm} \(\alpha_{\text{range}}, \beta_{\text{range}}\): initial coarse ranges for \(\alpha\) and \(\beta\)
		\STATE \hspace{1cm} \(n\): noise vector, \(\tilde{f}\): noisy signal
		\STATE \textbf{Output}
		\STATE \hspace{1cm} \(\alpha^*, \beta^*\): optimal parameters
		\STATE \textbf{Initialize} \(\epsilon^* \leftarrow \infty\), \(\alpha^*, \beta^* \leftarrow 0, 0\)
		\STATE \textbf{for} \(\alpha \in \alpha_{\text{range}}\) \textbf{do}
		\STATE \hspace{2cm} \textbf{for} \(\beta \in \beta_{\text{range}}\) \textbf{do}
		\STATE \hspace{3cm}  \textbf{Compute error} \(\epsilon(\alpha, \beta) = \|\mathbf{R} n\|_2\)
		\STATE \hspace{3cm}  \textbf{if} \(\epsilon(\alpha, \beta) < \epsilon^*\) \textbf{then}
		\STATE \hspace{4cm} \(\epsilon^* \leftarrow \epsilon(\alpha, \beta)\)
		\STATE \hspace{4cm} \(\alpha^*, \beta^* \leftarrow \alpha, \beta\)
		\STATE \hspace{3cm} \textbf{end if}
		\STATE \hspace{2cm} \textbf{end for}
		\STATE \hspace{1cm} \textbf{end for}
		\STATE \textbf{Refine search around} \((\alpha^*, \beta^*)\) with smaller step sizes
		\STATE \textbf{Repeat the grid search for refined range}
		\STATE \textbf{Return} \(\alpha^*, \beta^*\)
	\end{algorithmic}
\end{algorithm}

\section{Numerical Results and Applications}
The HGFRFT, akin to the HGFT and the time-vertex Fourier transform, has shown its versatility across a spectrum of problem classes, as evidenced by various datasets. In this section, we leverage the HGFRFT for conducting simulation experiments on a product graph and to model the susceptible-exposed-infected-recovereds (SEIRS) epidemic in Europe \cite{JFT}. Additionally, we employ fractional domain signal processing techniques in Hilbert space to handle radar signals and digital images, comparing these results with those obtained through HGFT. The findings underscore the effectiveness of HGFRFT in signal denoising, sample recovery, and localization tasks. All experiments\footnote{Code Available in: \url{https://github.com/Zhangyubit/HGFRFT}} presented in this paper were conducted with the assistance of the GSP toolbox \cite{GSPBOX}.

\subsection{Simulation on Product Graph Sampling} \label{sec6.1}
In this section, we conduct simulations for both a 4-node ring graph within $\mathbb{C}^{N}$ and a 4-node path graph in $\mathcal{H}$. We choose $\alpha=0.7$ and $\beta=0.5$. For our experiments, we define a graph signal with a bandwidth of $2$ in both spaces. Here, we introduce a graph chirp signal, analogous to the time-frequency analysis signal $x(t) = e^{i(lt + kt^2)}$, where $l$ and $k$ are constants. In the Hilbert space-vertex domain, the signal $f$ is formulated as 
\begin{equation}
f = \mathbf{f}_1 + 0.5\mathbf{f}_2 + 2\mathbf{f}_3, \label{signal}
\end{equation}
where $\mathbf{f}_i$ denotes the $i$th column in the operator matrix $\Psi^{\alpha} \otimes \Phi^{\beta}$. Fig. \ref{fig2} presents the configurations of the original graphs. 
\begin{figure*}[ht]
	\begin{center}
		\begin{minipage}[t]{0.3\linewidth}
			\centering
			\includegraphics[width=\linewidth]{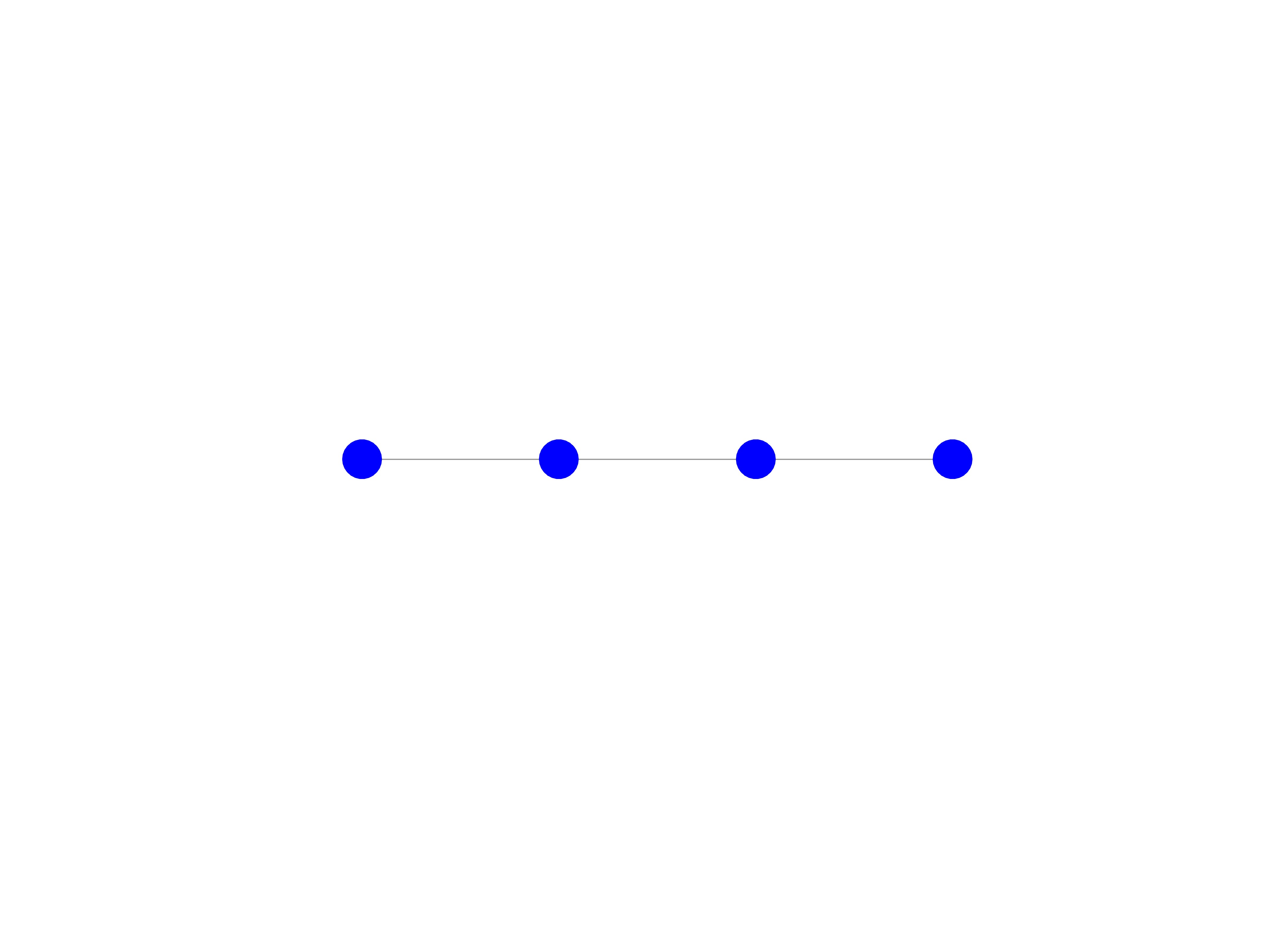}
			\parbox{1.5cm}{\tiny (a) Path graph.}
		\end{minipage}
		\begin{minipage}[t]{0.3\linewidth}
			\centering
			\includegraphics[width=\linewidth]{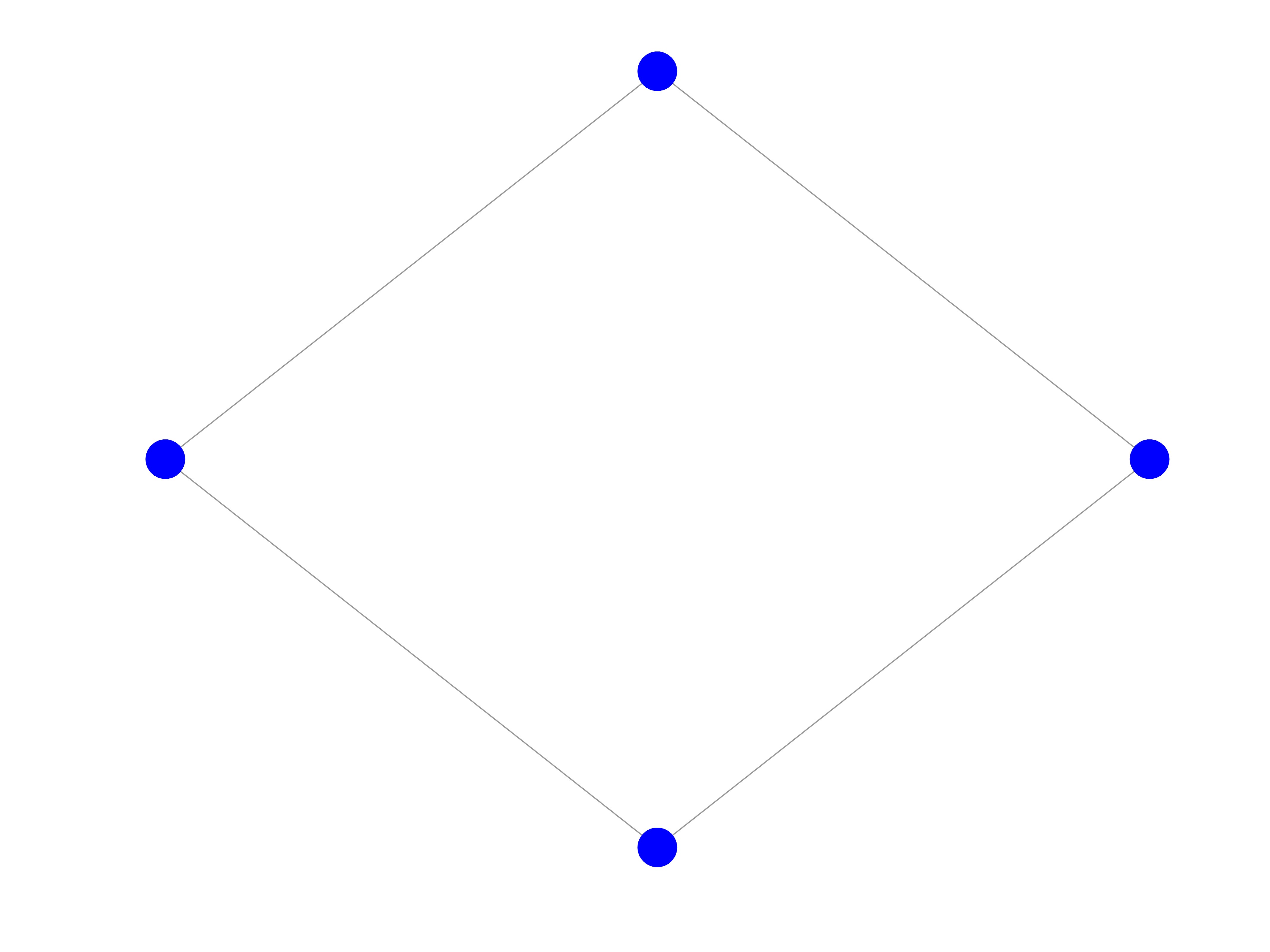}
			\parbox{1.5cm}{\tiny (b) Ring graph.}
		\end{minipage}
		\begin{minipage}[t]{0.3\linewidth}
			\centering
			\includegraphics[width=\linewidth]{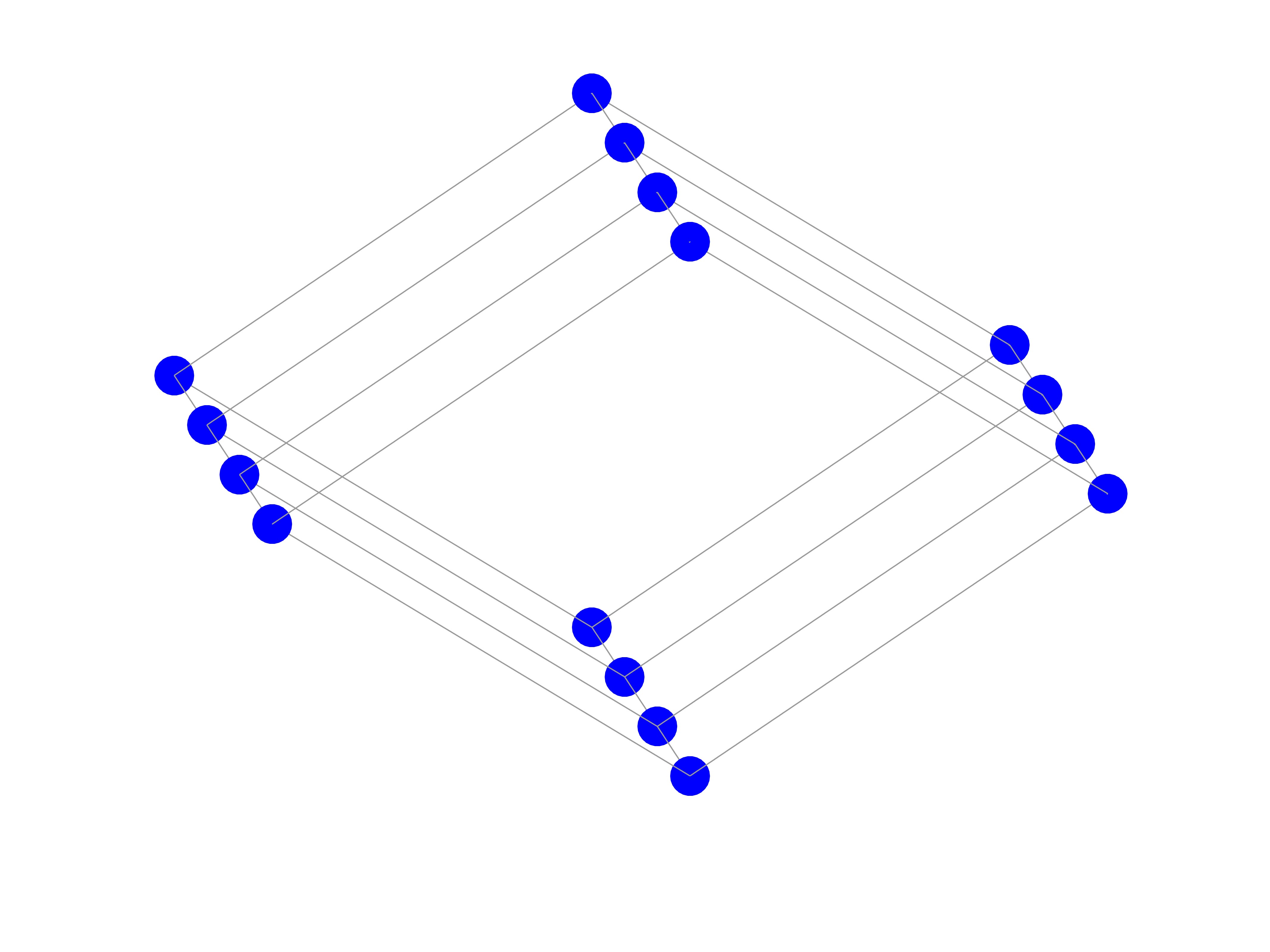}
			\parbox{1.5cm}{\tiny (c) Product graph.}
		\end{minipage}
	\end{center}
	\caption{Simulation graph: 4-node path graph, 4-node ring graph and 16-node product graph.}
	\vspace*{-3pt}
	\label{fig2}
\end{figure*}

Although signals on these graphs can exhibit complex structures, previous considerations have not addressed complex-valued signals, such as linearly chirped signals, allowing us to examine the sampling effects in both frameworks. Fig. \ref{fig3} illustrates graph signals in the fractional domain within conventional GSP. Panels (a) and (b) represent the decomposition of the signal as described in Eq. \eqref{signal}, corresponding to the operators $\Psi^{\alpha}$ and $\Phi^{\beta}$, respectively.
\begin{figure}[h!]
	\begin{center}
		\begin{minipage}[t]{0.45\linewidth}
			\centering
			\includegraphics[width=\linewidth]{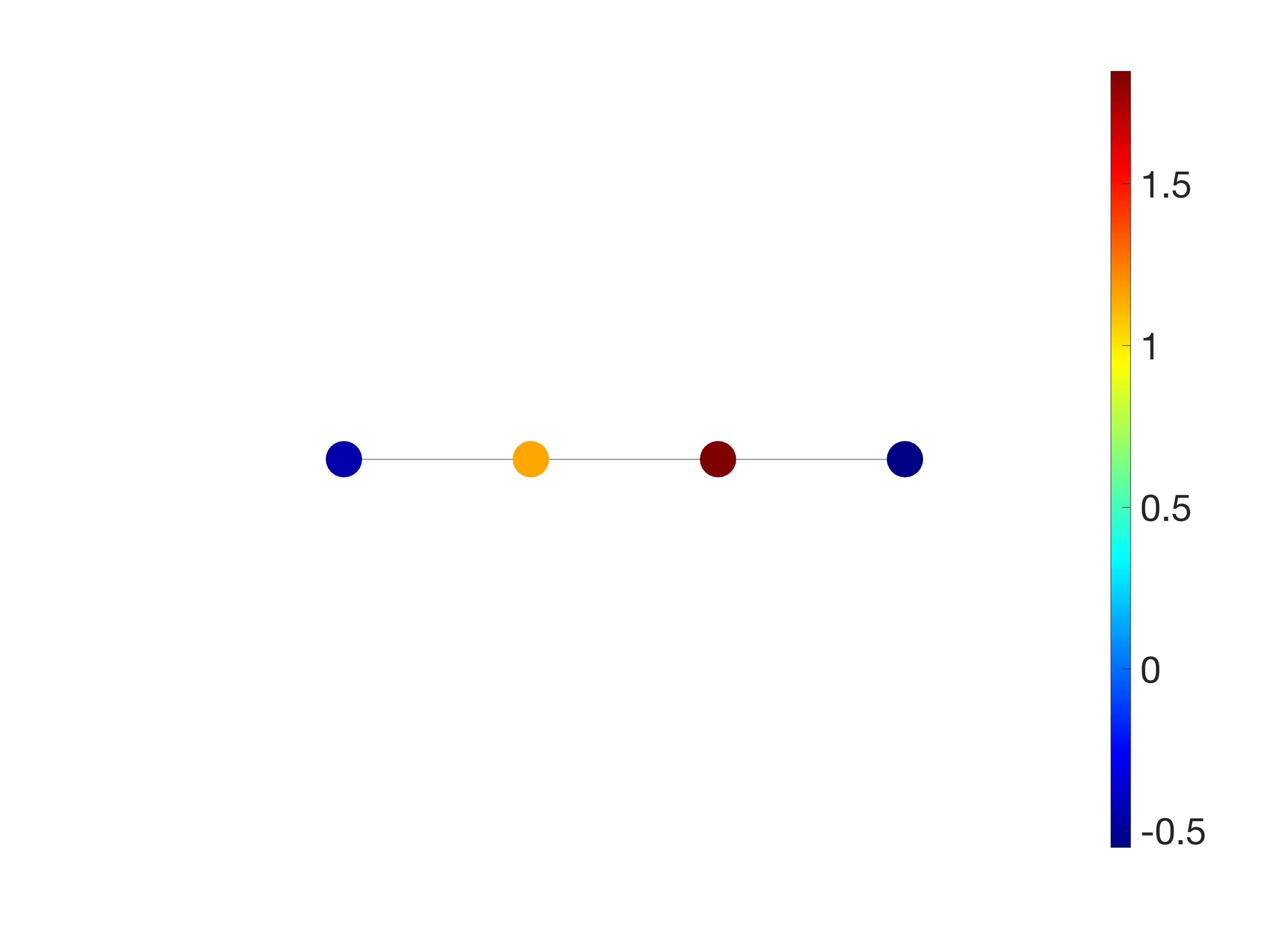}
			\parbox{3cm}{\tiny (a) The path graph of the signal is $\Psi^{\alpha}_1 + 0.5\Psi^{\alpha}_2 + 2\Psi^{\alpha}_3$.}
		\end{minipage}
		\begin{minipage}[t]{0.45\linewidth}
			\centering
			\includegraphics[width=\linewidth]{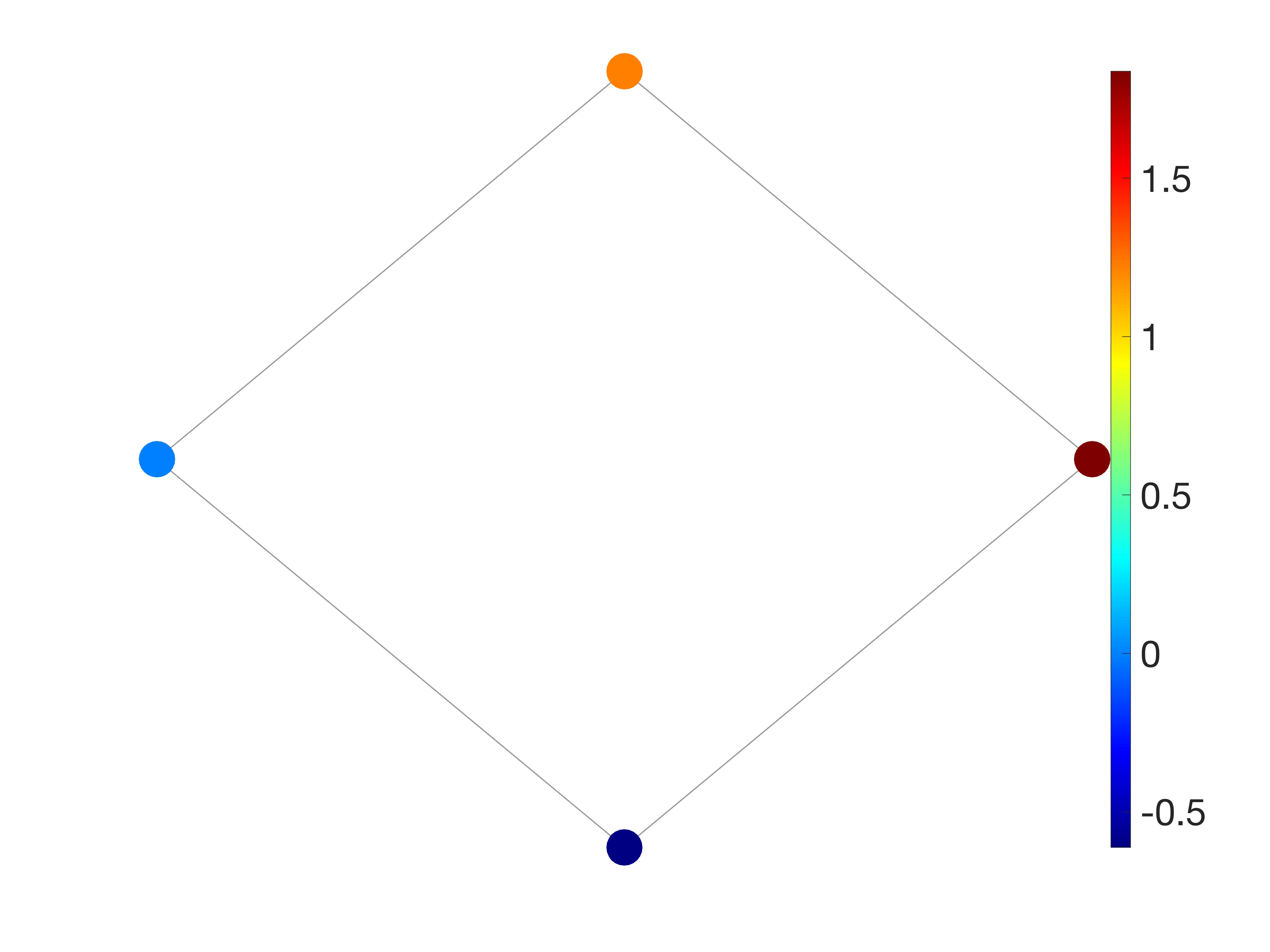}
			\parbox{3cm}{\tiny (b) The ring graph of the signal is $\Phi^{\beta}_1 + 0.5\Phi^{\beta}_2 + 2\Phi^{\beta}_3$.}
		\end{minipage}
	\end{center}
	\caption{The fractional graph signal in traditional GSP.}
	\vspace*{-3pt}
	\label{fig3}
\end{figure}

We employ two optimal sampling operators based on $\alpha=0.7$ and $\beta=0.5$, namely the HGFT and HGFRFT, to sample and recover the graph signal. The recovery accuracy is quantified using the error metric
\begin{equation}
	\epsilon = ||f_{\text{rec}} - f||_2. \label{error}
\end{equation} 
Fig. \ref{fig4} depicts the graph signals on the product graph, while Fig. \ref{fig5}(a) and Fig. \ref{fig5}(b) illustrate the sampling nodes using HGFT and HGFRFT, respectively. Signal recovery results are presented in Fig. \ref{fig6}(a), where HGFT sampling yields a recovery error of $\epsilon = 0.8361$, and Fig. \ref{fig6}(b), where HGFRFT achieves near-perfect recovery with $\epsilon = 5.8055 \times 10^{-17}$. The inability of HGFT to perfectly reconstruct the graph signal is attributed to the signal's generalized GFT domain bandwidth, which is not strictly bounded, despite being limited to $\alpha = 0.7$ and $\beta = 0.5$. Fig. \ref{fig7}(a) demonstrates that when varying both $\alpha$ and $\beta$, sampling with $\alpha=0.7$ leads to noticeably smaller recovery errors. Similarly, Fig. \ref{fig7}(b) reveals that once the optimal $\alpha = 0.7$ is determined, the recovery error is minimized when $\beta = 0.5$.
\begin{figure}[h!]
	\begin{center}
		\begin{minipage}[t]{0.45\linewidth}
			\centering
			\includegraphics[width=\linewidth]{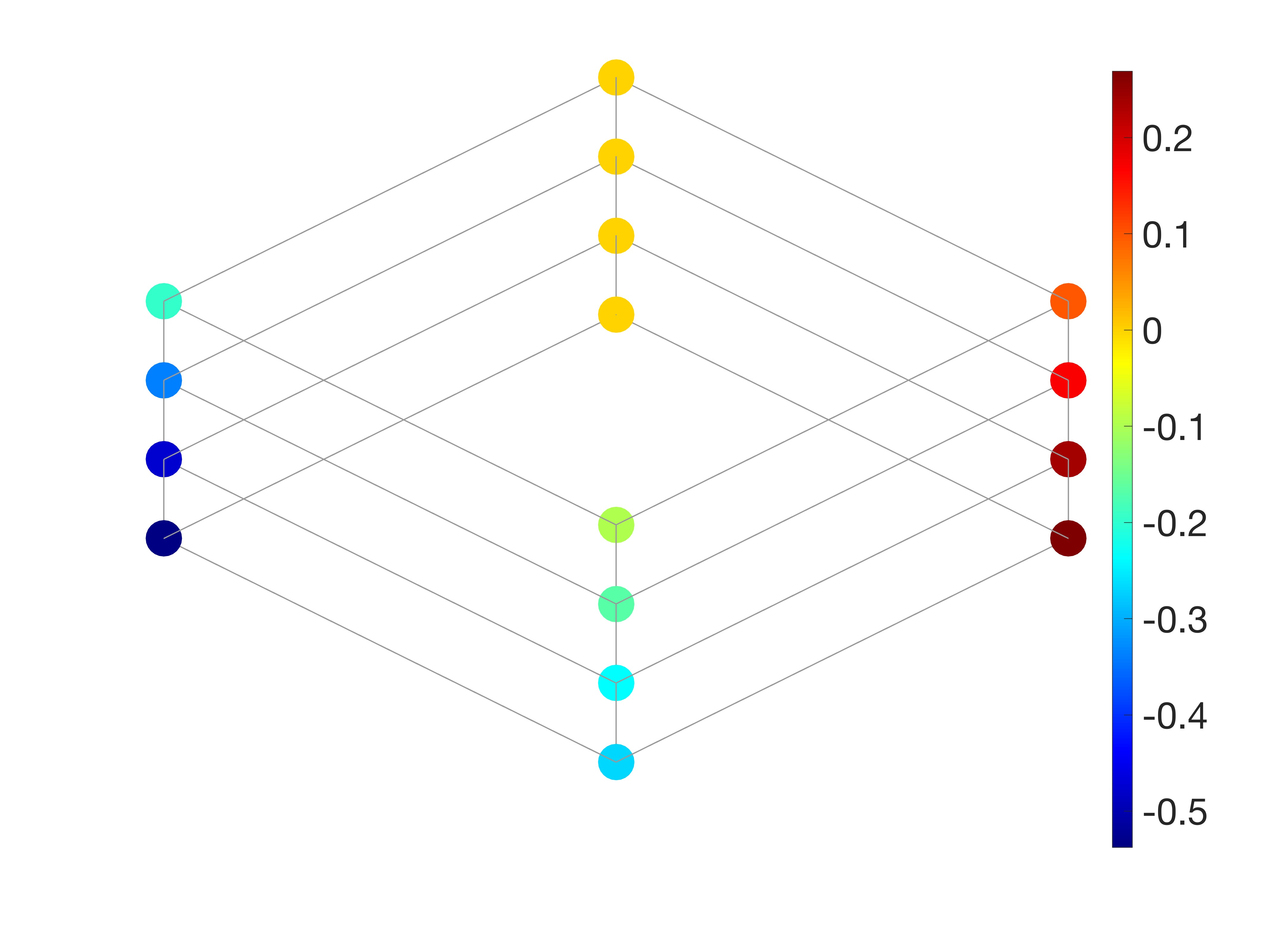}
			\parbox{2.5cm}{\tiny (a) The 1st basis vector $\mathbf{f}_1$.}
		\end{minipage}
		\begin{minipage}[t]{0.45\linewidth}
			\centering
			\includegraphics[width=\linewidth]{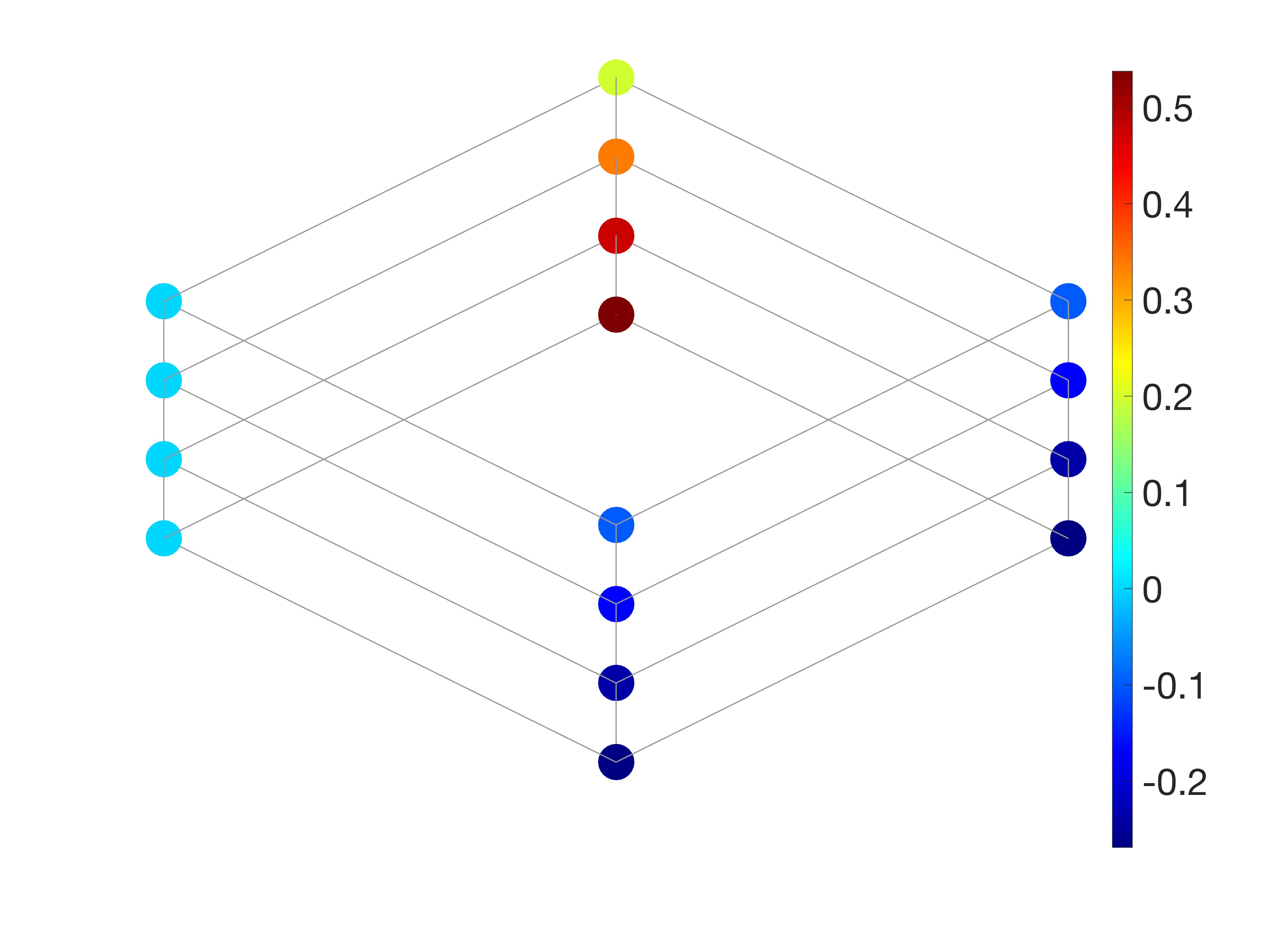}
			\parbox{2.5cm}{\tiny (b) The 2nd basis vector $\mathbf{f}_2$.}
		\end{minipage}
		\begin{minipage}[t]{0.45\linewidth}
			\centering
			\includegraphics[width=\linewidth]{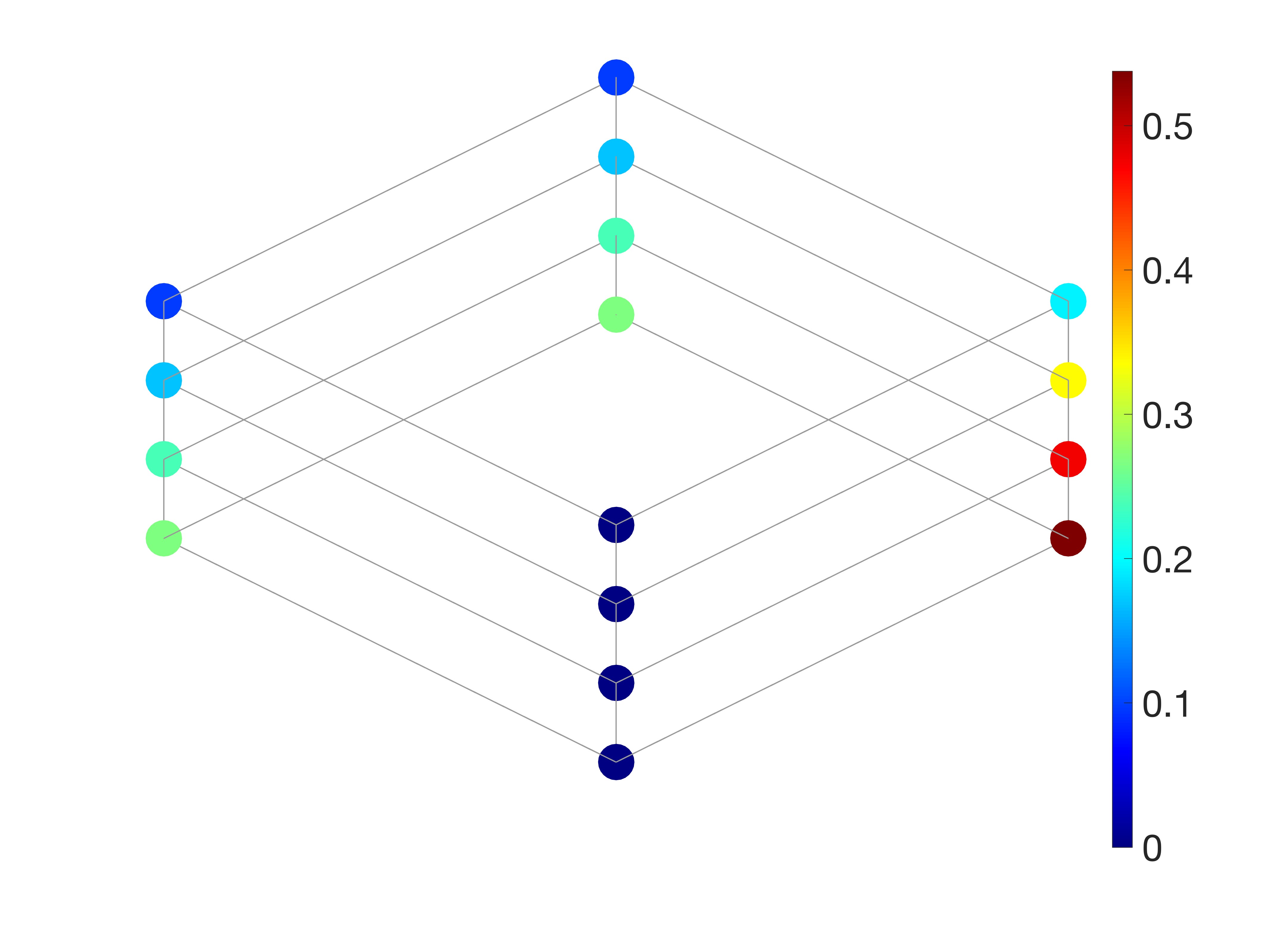}
			\parbox{2.5cm}{\tiny (c) The 3rd basis vector $\mathbf{f}_3$.}
		\end{minipage}
		\begin{minipage}[t]{0.45\linewidth}
			\centering
			\includegraphics[width=\linewidth]{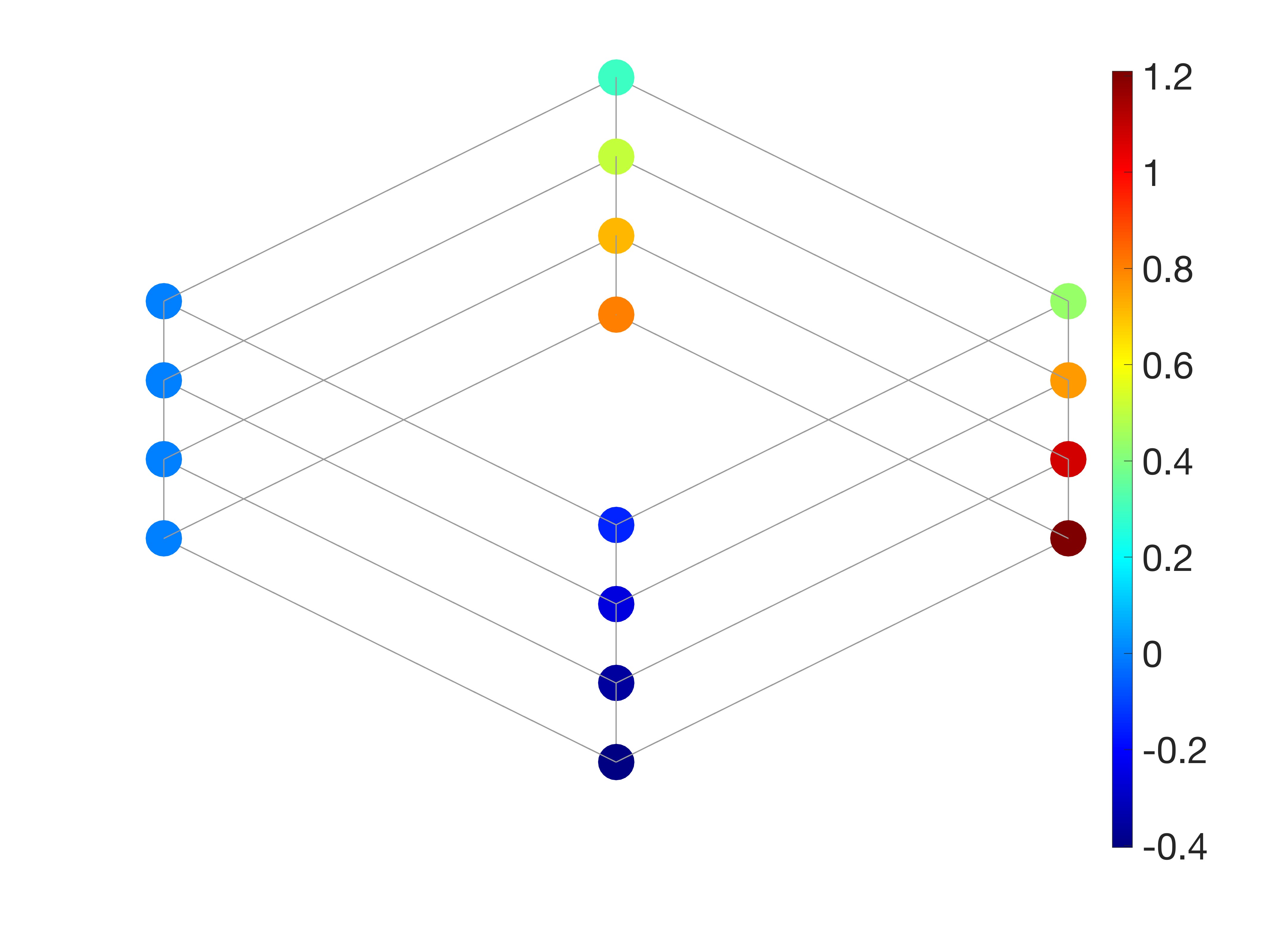}
			\parbox{2.4cm}{\tiny (d) Original graph signal.}
		\end{minipage}
	\end{center}
	\caption{Graph signals on the product graph.}
	\vspace*{-3pt}
	\label{fig4}
\end{figure}
\begin{figure}[h!]
	\begin{center}
		\begin{minipage}[t]{0.45\linewidth}
			\centering
			\includegraphics[width=\linewidth]{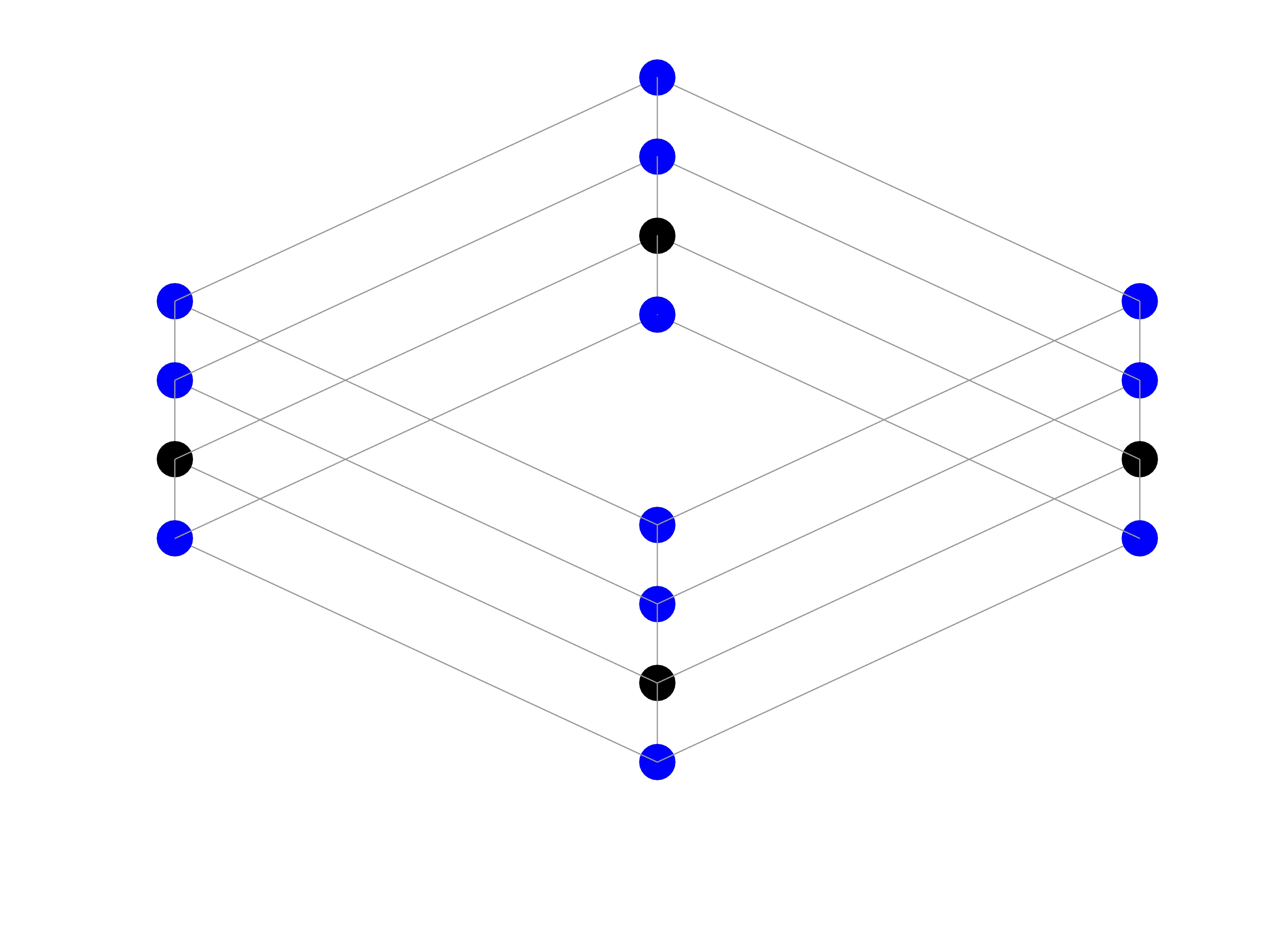}
			\parbox{2.5cm}{\tiny (a) Samples from HGFT sampling.}
		\end{minipage}
		\begin{minipage}[t]{0.45\linewidth}
			\centering
			\includegraphics[width=\linewidth]{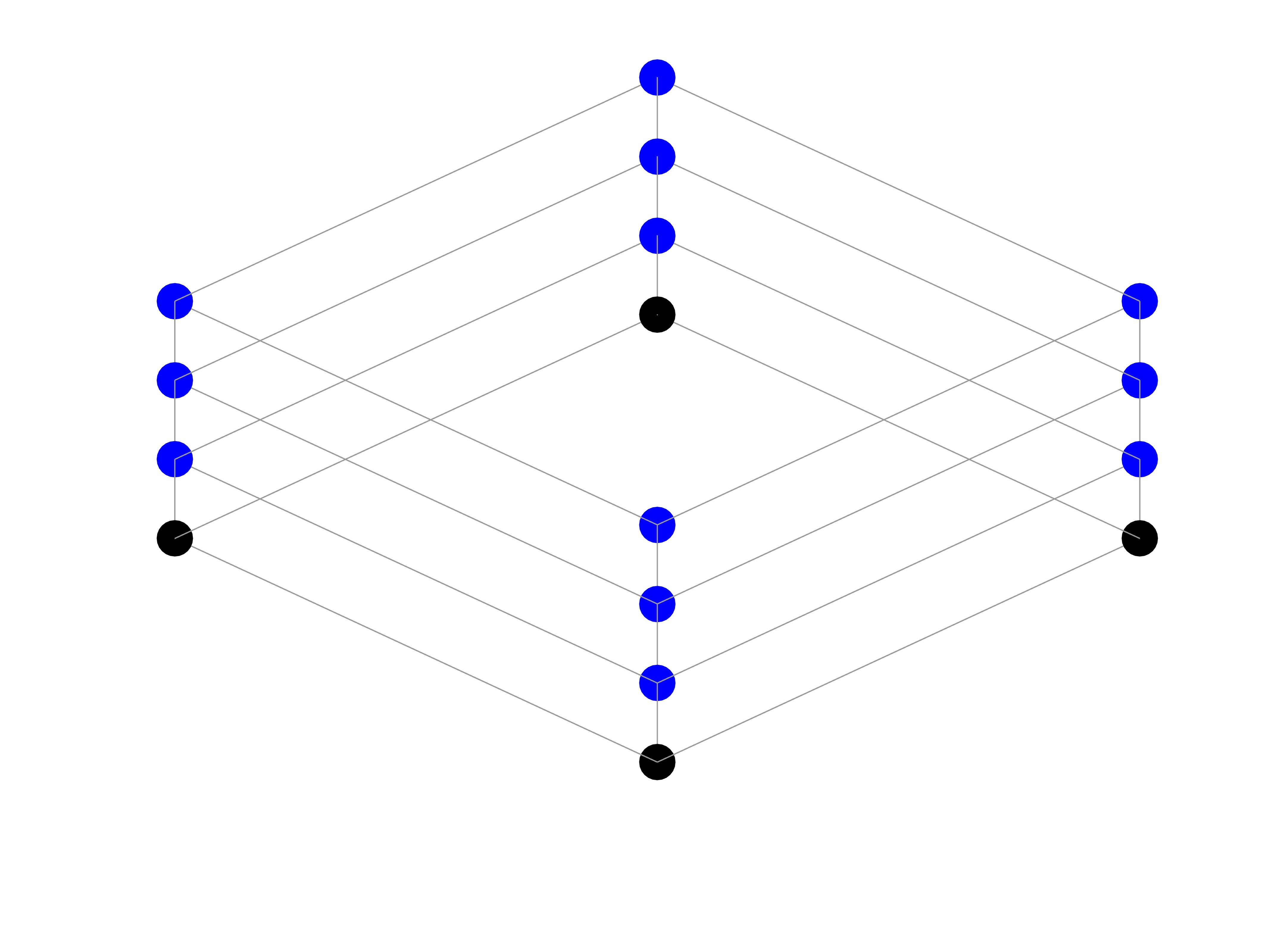}
			\parbox{2.7cm}{\tiny (b) Samples from HGFRFT sampling.}
		\end{minipage}
	\end{center}
	\caption{Sampling node locations (shown in black).}
	\vspace*{-3pt}
		\label{fig5}
\end{figure}

\begin{figure}[h!]
	\begin{center}
		\begin{minipage}[t]{0.45\linewidth}
			\centering
			\includegraphics[width=\linewidth]{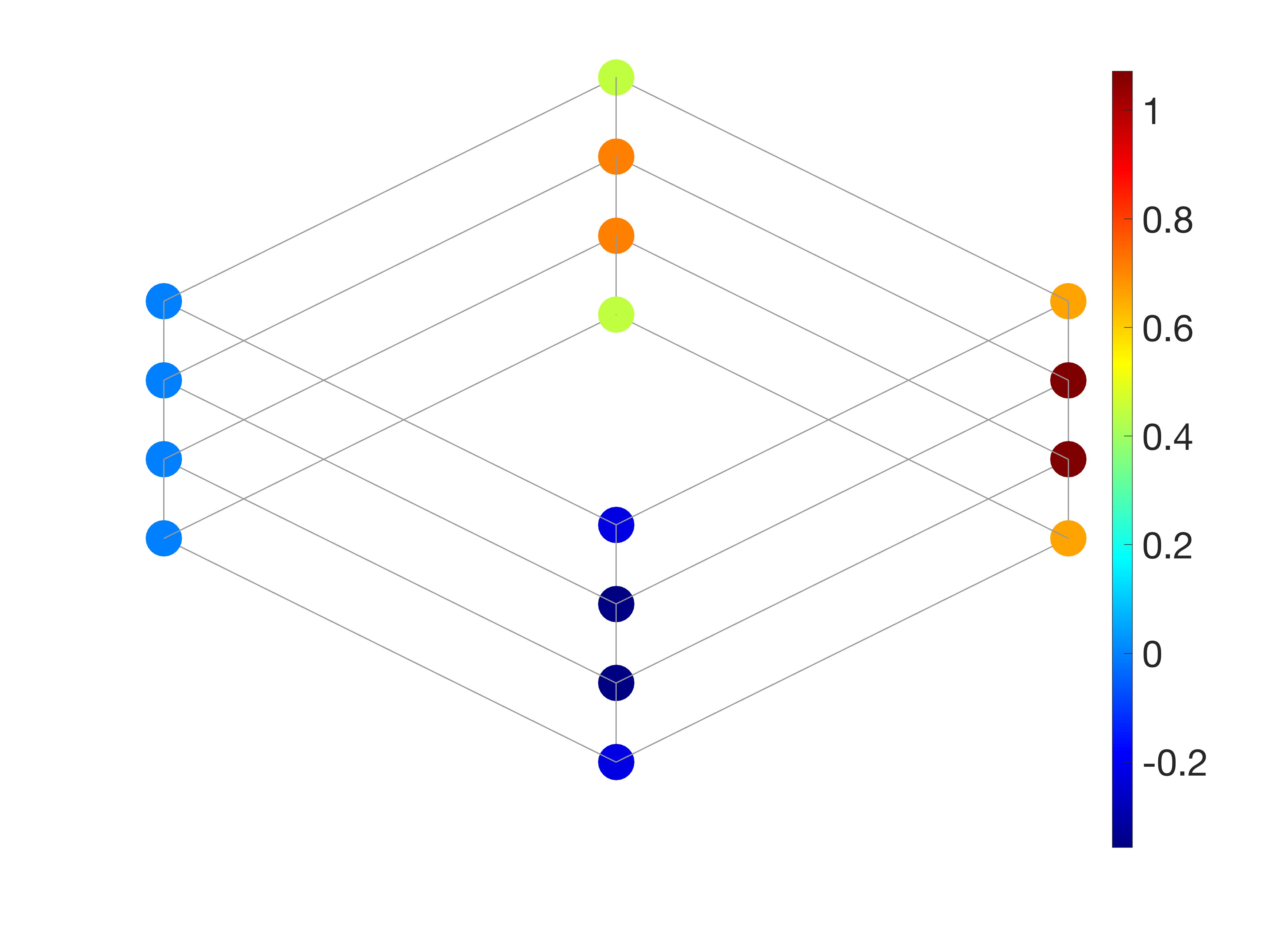}
			\parbox{3cm}{\tiny (a) Recovery from HGFT sampling.}
		\end{minipage}
		\begin{minipage}[t]{0.45\linewidth}
			\centering
			\includegraphics[width=\linewidth]{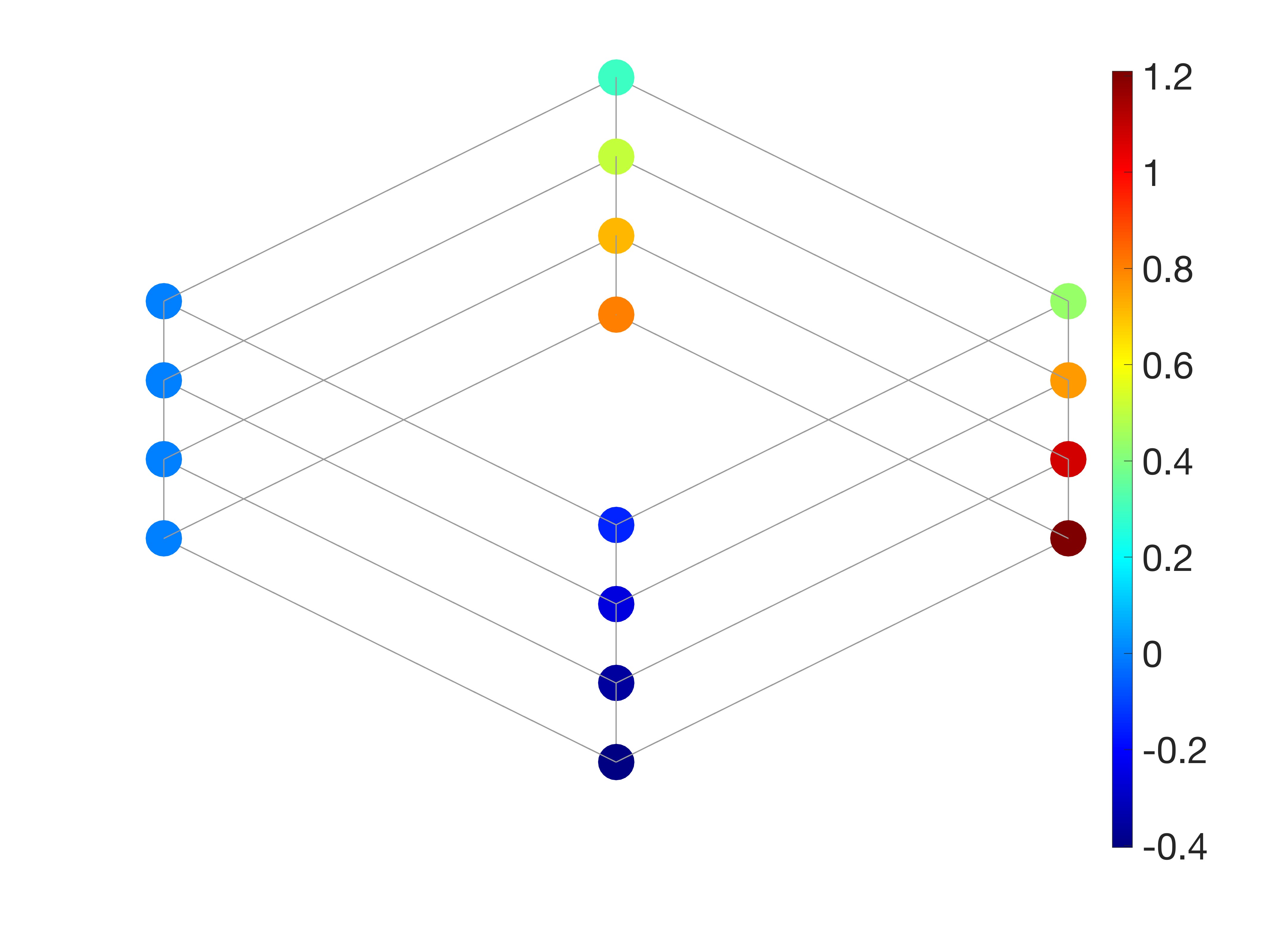}
			\parbox{3cm}{\tiny (b) Recovery from HGFRFT sampling.}
		\end{minipage}
	\end{center}
	\caption{Recovered signals on the product graph.}
	\vspace*{-3pt}
		\label{fig6}
\end{figure}

\begin{figure}[h!]
	\begin{center}
		\begin{minipage}[t]{0.45\linewidth}
			\centering
			\includegraphics[width=\linewidth]{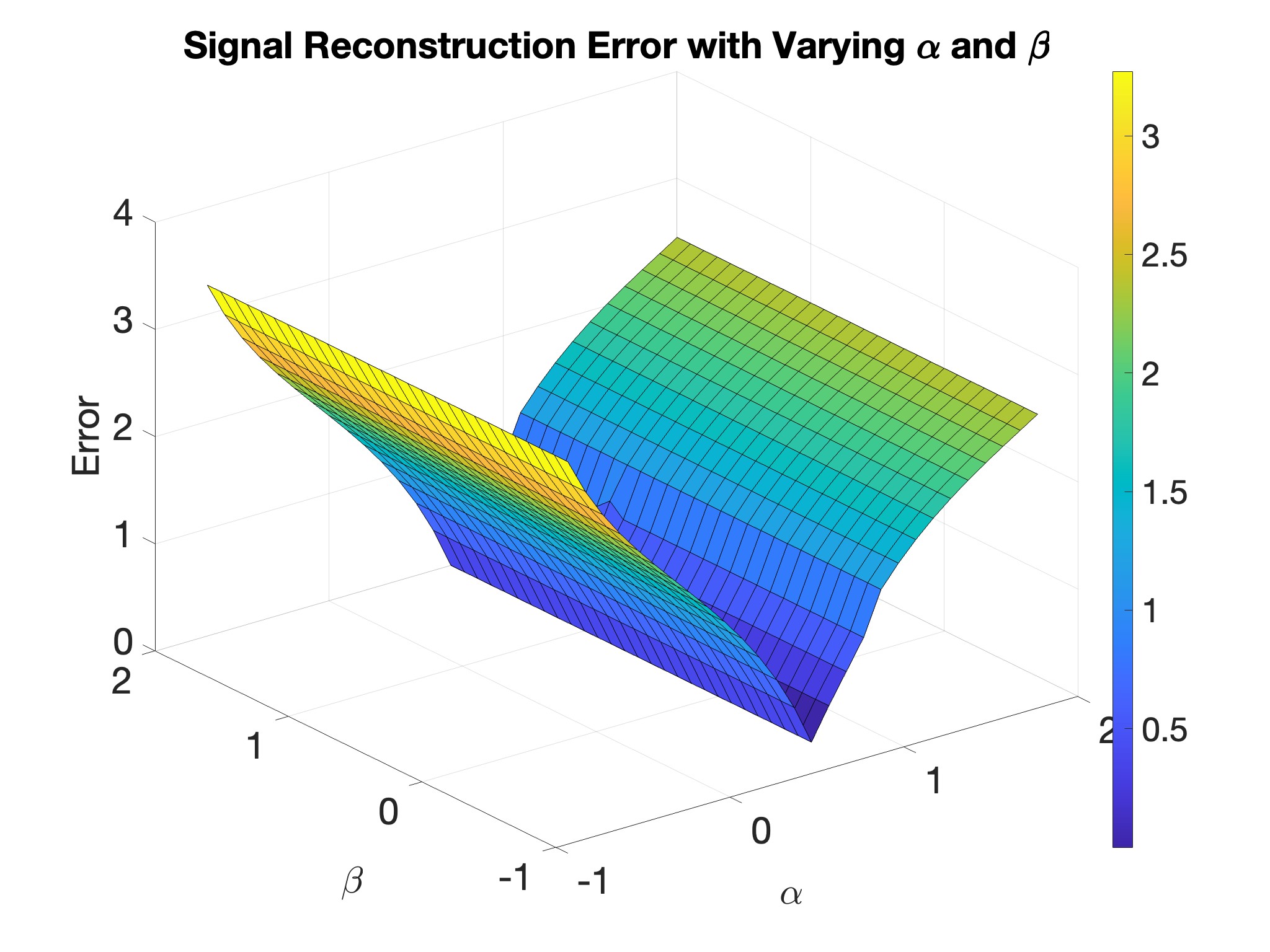}
			\parbox{2cm}{\tiny (a) $\epsilon$ vs. $\alpha$ and $\beta$.}
		\end{minipage}
		\begin{minipage}[t]{0.45\linewidth}
			\centering
			\includegraphics[width=\linewidth]{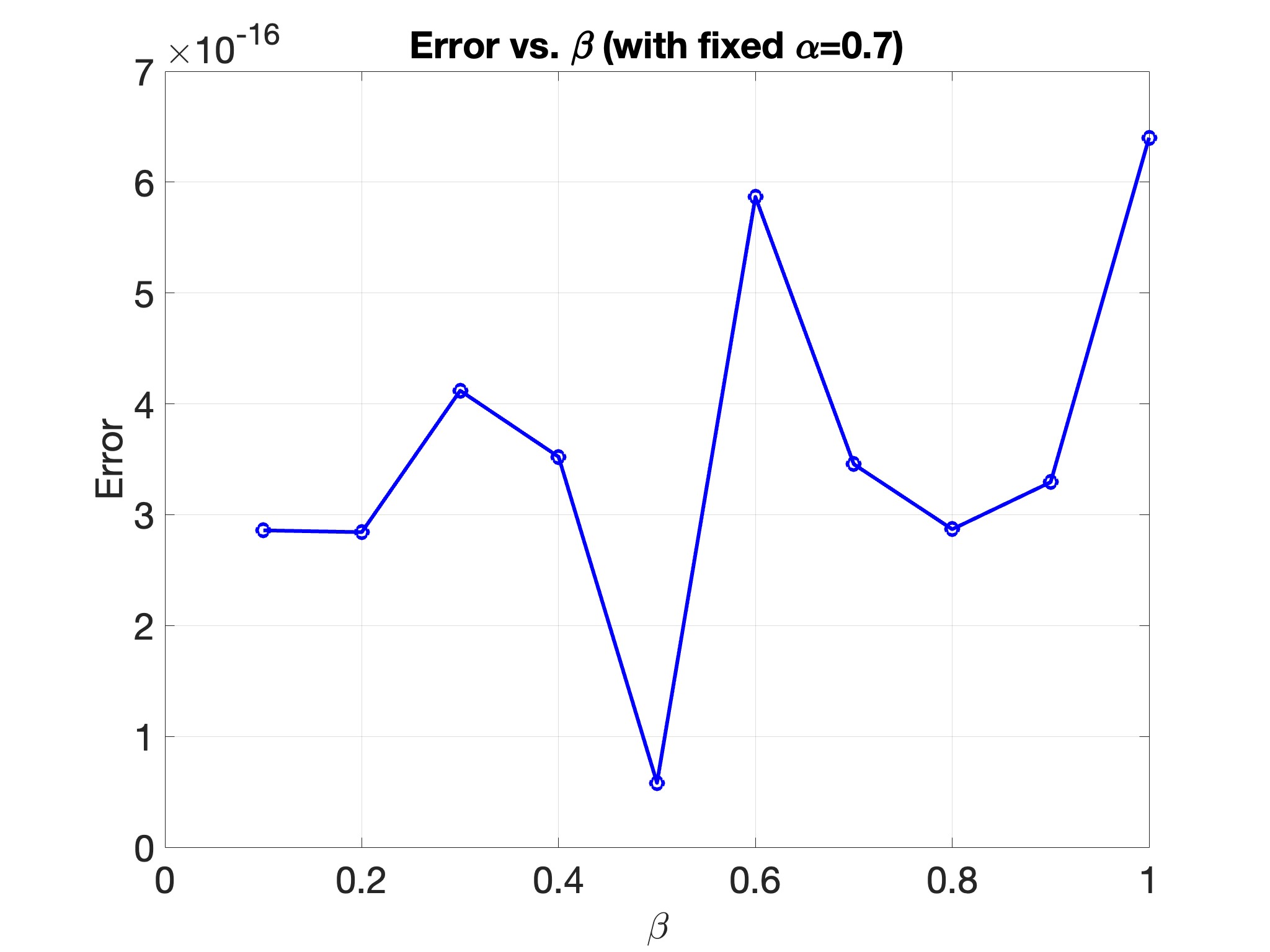}
			\parbox{2.5cm}{\tiny (b) $\epsilon$ vs. $\beta$ (with fixed $\alpha$=0.7).}
		\end{minipage}
	\end{center}
	\caption{Error value $\epsilon$ for different $\alpha$ and $\beta$.}
	\vspace*{-3pt}
	\label{fig7}
\end{figure}

\subsection{Graph Frequency Analysis on Epidemic Models}\label{sec6.2}
In this section, we illustrate how the HGFRFT can be applied to the complex dynamics of networks through epidemic models \cite{epidemics}, offering new methods and perspectives for the evolution of nonlinear, discrete, and uncertain models of infectious disease spread. In this graph, each node represents a city with a fixed population, and the edges are based on the terrestrial positions and flight connections between major European cities. We simulate the spread of an epidemic in $N = 704$ European cities using a model: the SEIRS model \cite{JFT}. The model is parameterized by the probability of infection transmission, the infectious period, the incubation period, and the immunity period.

\subsubsection{Construction and Simulation of The SEIRS Model}
This model explores the behavior of heat diffusion and wave equations in discrete settings by solving linear partial differential equations \cite{Diffusion}. In discrete setups, the heat diffusion equation is represented as $f_t - f_{t-1} = -s\mathcal{L} f_t$, where $f_t$ denotes the state at time $t$, $s$ is the heat diffusion rate indicating the speed of heat diffusion, and $\mathcal{L}$ is the graph Laplacian matrix that illustrates the connectivity between vertices in the graph. The solution to the heat equation can be obtained by applying $(\mathbf{I} – s\mathcal{L})^{t-1}$ to the initial condition $f_1$. When a signal undergoes the HGFRFT, the transformed signal is represented as
\[
\mathbf{Y}(l,k)=(\Phi^{-\beta}f_{1})(l)\cdot \Psi^{\alpha}(k,1), 
\]
where $\alpha$ and $\beta$ modulate the frequencies in the Hilbert space and graph, respectively. Here, $l$ indexes the graph frequencies, and $k$ indexes the Hilbert space frequencies. There is a unique structure in the Hilbert space-vertex spectral domain
\[
\hat{\mathbf{X}} \left( l,k\right) = \frac{1}{\sqrt{|\Psi^{\alpha}|}} \frac{a\left( \lambda_{l} ,\omega_{k}\right)^{\top} - 1}{a\left( \lambda_{l} ,\omega_{k}\right) - 1} \mathbf{Y}\left( l,k\right),
\]
where $a(\lambda_l, \omega_k) = (1 - s\lambda_l) e^{-j \omega_k}$, and $|\Psi^{\alpha}|$ denotes the dimension of the Hilbert space. This tends to preserve low-frequency information while filtering out high-frequency noise.

For the wave equation \cite{JFT}, it can be modeled through the discrete second-order differential equation $\mathbf{X}(\mathbf{B} - \mathrm{diag}(\mathbf{B1})) = s\mathcal{L}\mathbf{X}$, representing discrete waves propagating on the graph at a speed \(s > 0\). Hence, in the Hilbert space-vertex spectral domain, the solution is given by
\[
\hat{\mathbf{X}} \left( l,k\right)  =\sum_{t} \cos \left\{ t\cos^{-1} \left( 1-\frac{s\lambda_{l} }{2} \right)  \right\}  e^{-j\omega_{k} t}\cdot \mathbf{Y}\left( l,k\right),
\]
where $s < 4/\lambda_{\max}$ to maintain stability.

Using these equations, we derive the SEIRS model. Fig. \ref{fig8} shows the HGFRFT of the SEIRS model simulating an epidemic outbreak, illustrating the evolution of infection counts across the graph in the $(\omega, \lambda)$ plane, where brighter colors indicate higher energy levels. Figs. \ref{fig8}(a)-(j) demonstrate how the plane shifts with variations in the fractional pair $(\alpha, \beta)$. Lower $\beta$ values alter the graph's structure, concentrating energy around certain angular frequency axes, while $\beta = 1$ and reduced $\alpha$ increase energy across the image. The HGFRFT reveals spectral lines at regular intervals along the angular frequency axis, reflecting reinfection potential after temporary immunity. It also distinguishes between high and low infection probability scenarios, with higher outbreak probabilities corresponding to increased high-frequency energy, reflecting the impulsive nature of epidemics.
	\begin{figure*}[ht]
		\centering
		\begin{minipage}[t]{0.18\linewidth}
			\centering
			\includegraphics[width=\linewidth]{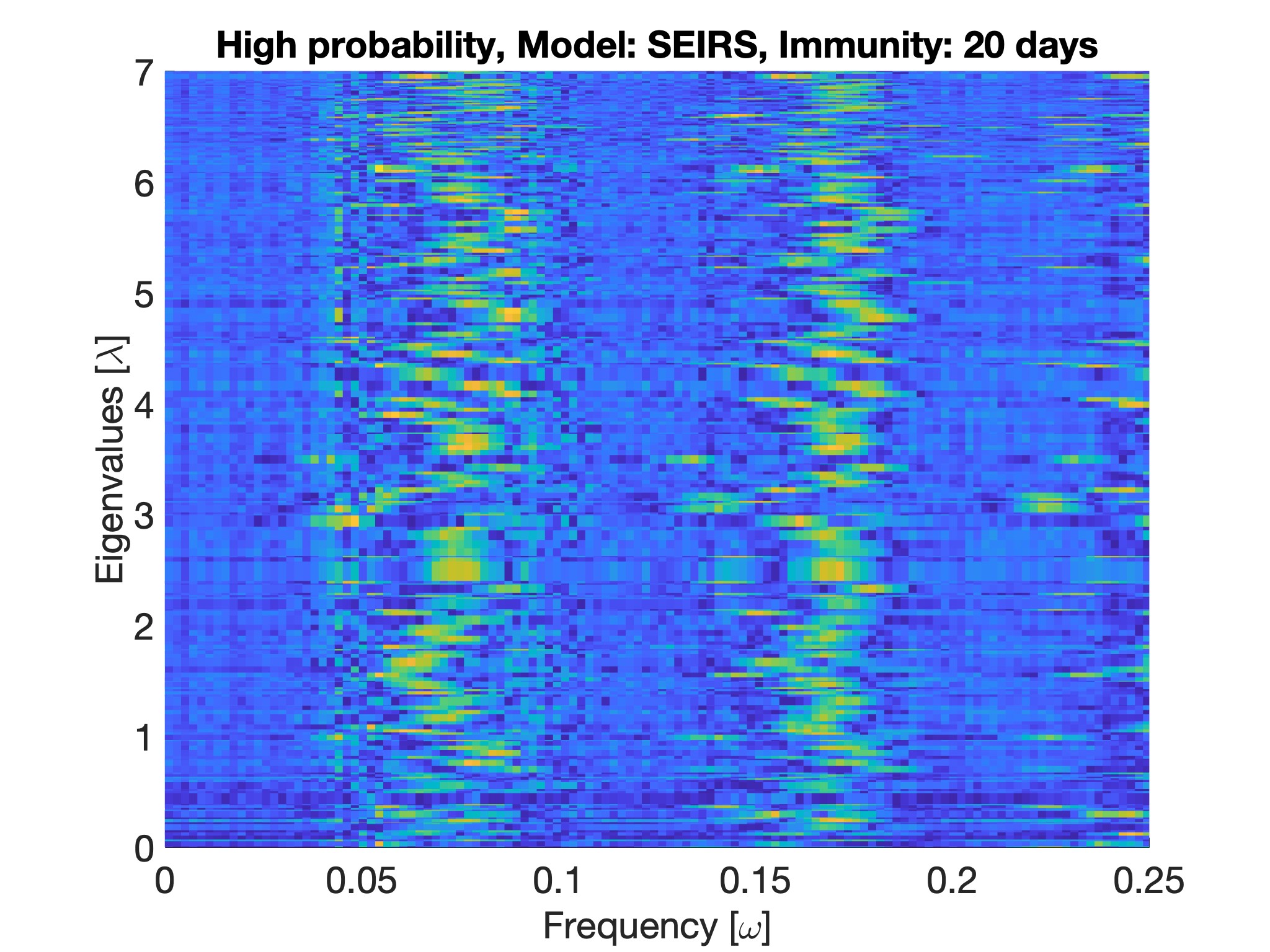}
			\vspace{-20pt} 
			\caption*{\tiny (a) High probability of $\alpha=0.1, \beta=0.1$.}
		\end{minipage}
		\hfill
		\begin{minipage}[t]{0.18\linewidth}
			\centering
			\includegraphics[width=\linewidth]{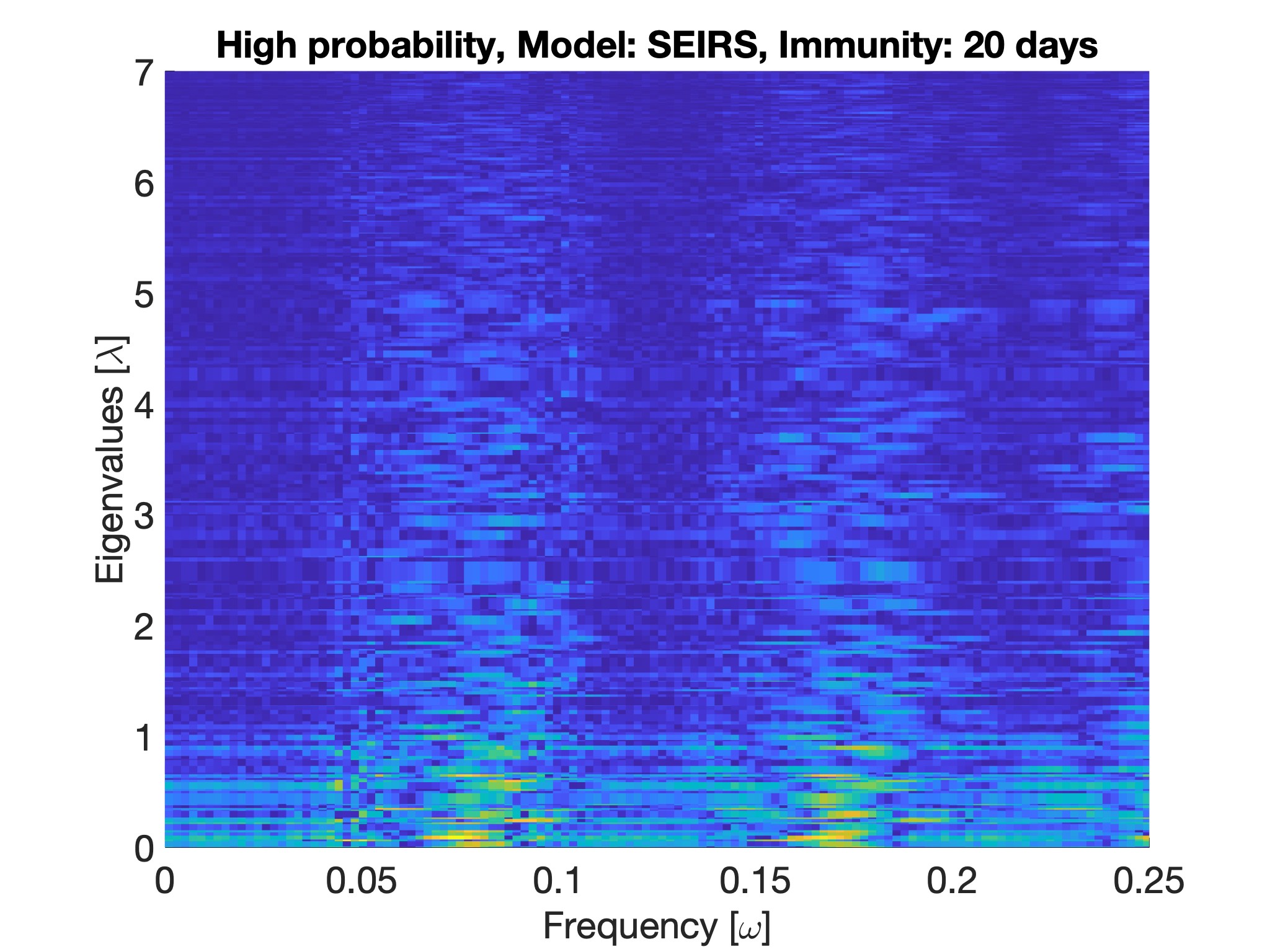}
			\vspace{-20pt} 
			\caption*{\tiny (b) High probability of $\alpha=1, \beta=0.1$.}
		\end{minipage}
		\hfill
		\begin{minipage}[t]{0.18\linewidth}
			\centering
			\includegraphics[width=\linewidth]{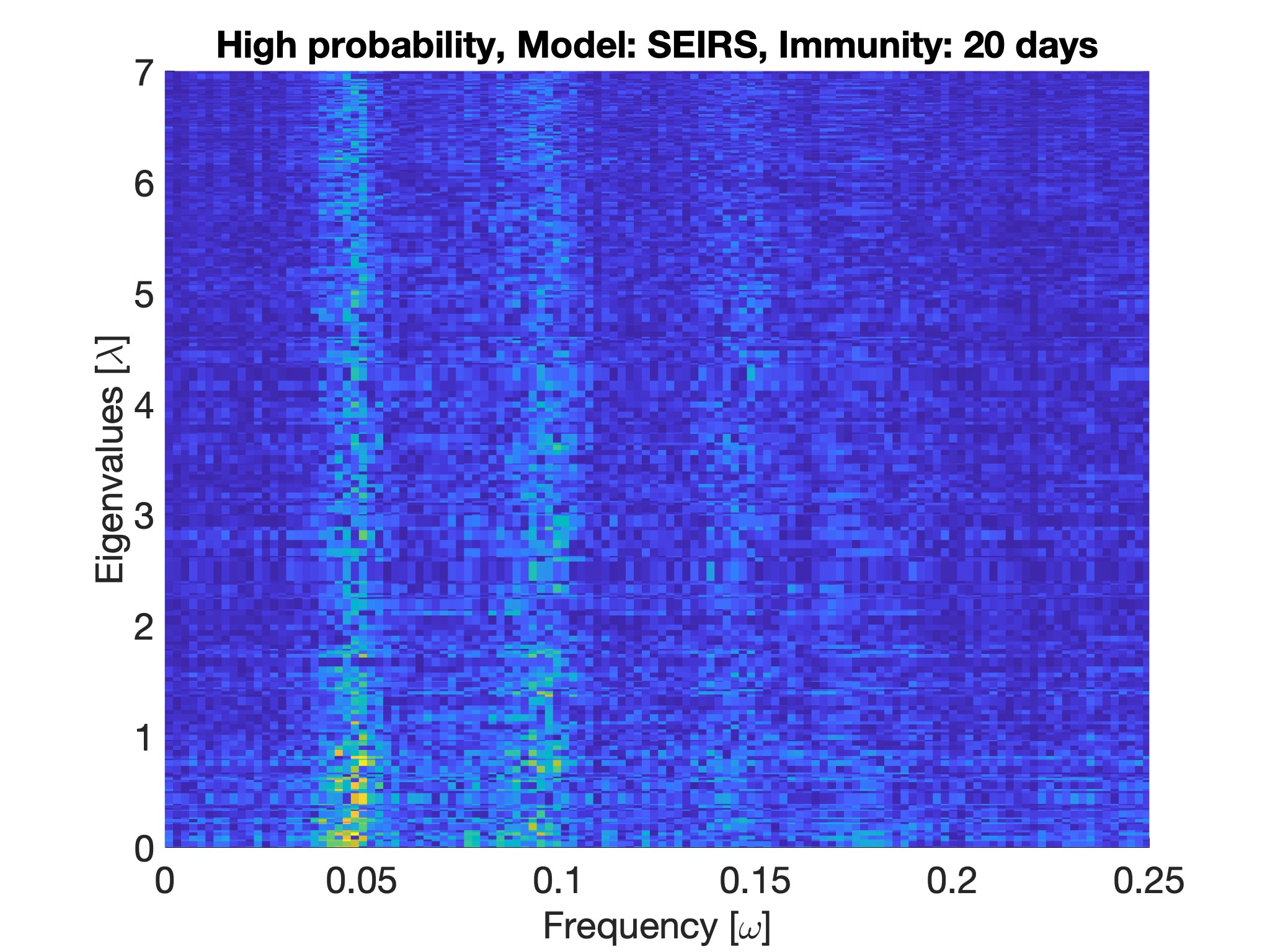}
			\vspace{-20pt} 
			\caption*{\tiny (c) High probability of $\alpha=0.5, \beta=0.5$.}
		\end{minipage}
		\hfill
		\begin{minipage}[t]{0.18\linewidth}
			\centering
			\includegraphics[width=\linewidth]{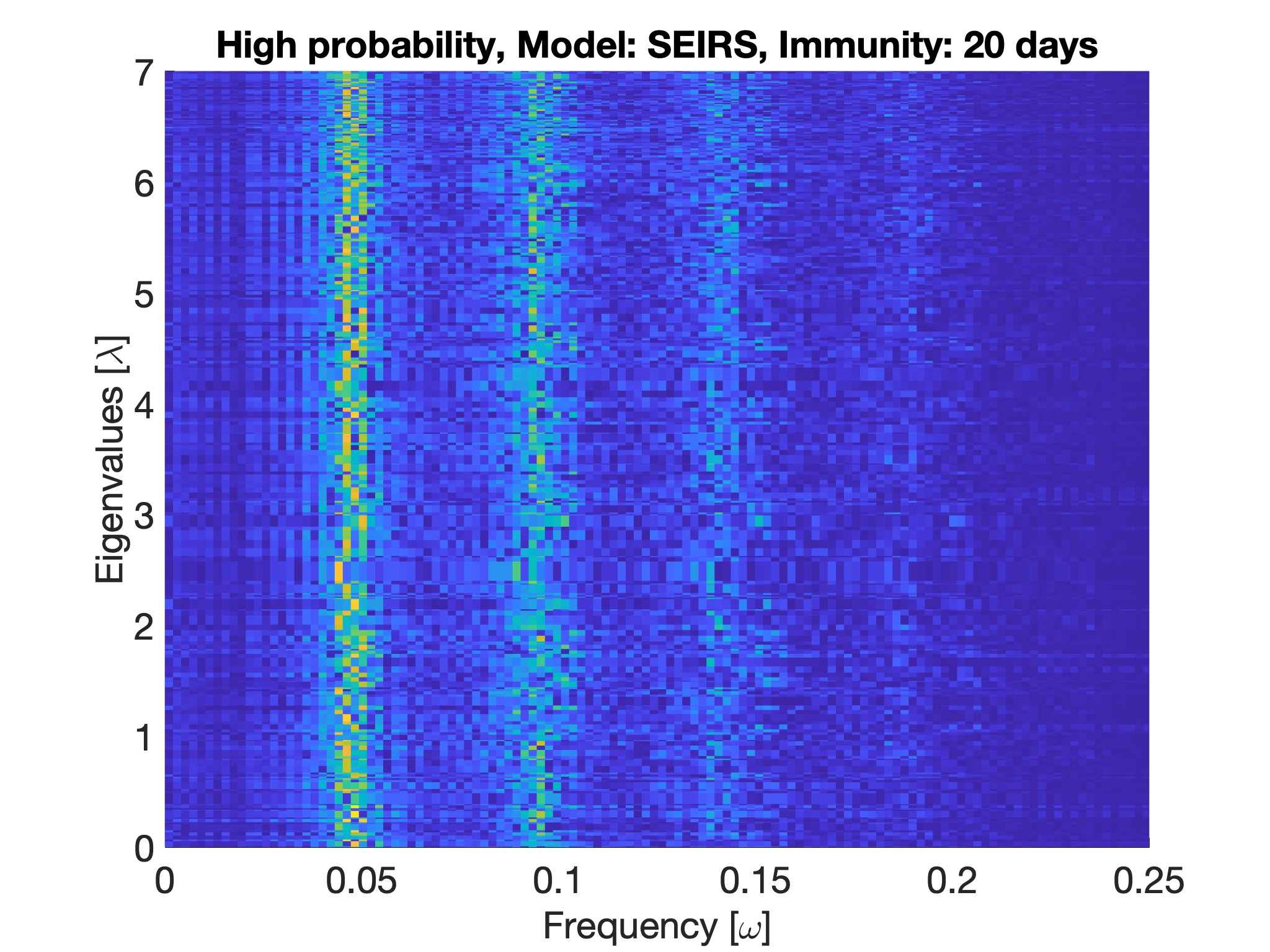}
			\vspace{-20pt} 
			\caption*{\tiny (d) High probability of $\alpha=0.1, \beta=1$.}
		\end{minipage}
		\hfill
		\begin{minipage}[t]{0.18\linewidth}
			\centering
			\includegraphics[width=\linewidth]{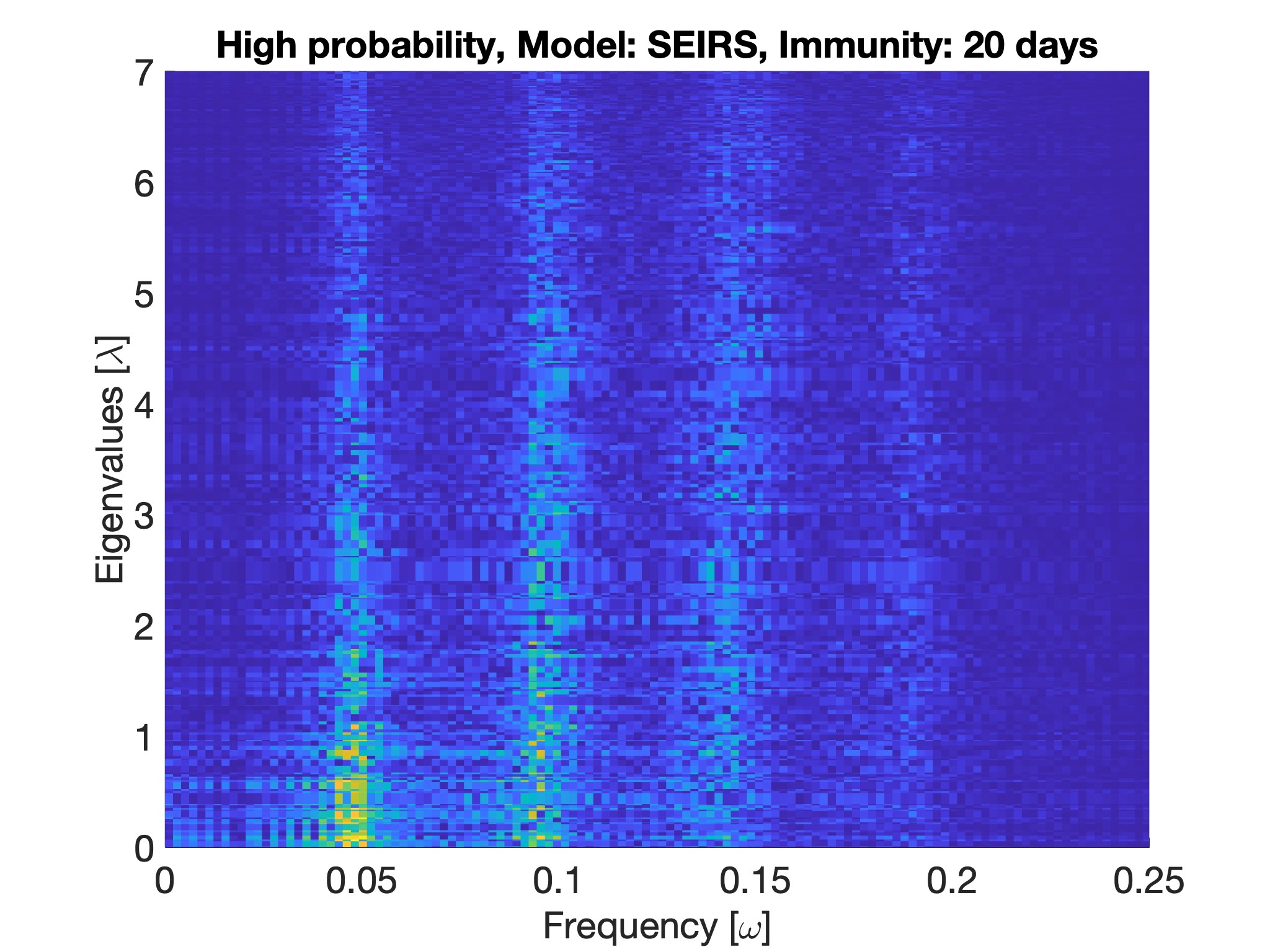}
			\vspace{-20pt} 
			\caption*{\tiny (e) High probability of $\alpha=1, \beta=1$.}
		\end{minipage}
		
		
		\begin{minipage}[t]{0.18\linewidth}
			\centering
			\includegraphics[width=\linewidth]{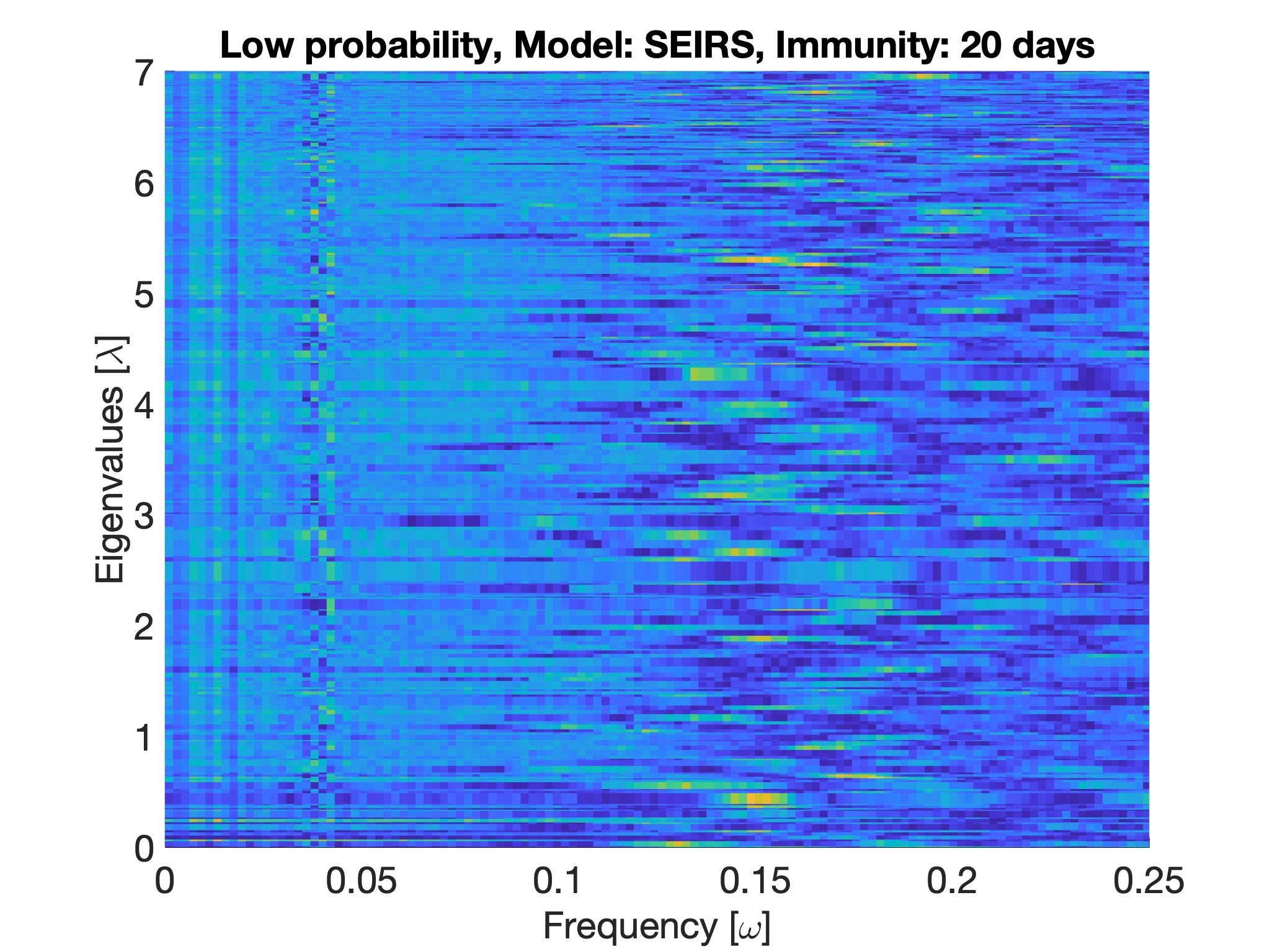}
			\vspace{-20pt} 
			\caption*{\tiny (f) Low probability of $\alpha=0.1, \beta=0.1$.}
		\end{minipage}
		\hfill
		\begin{minipage}[t]{0.18\linewidth}
			\centering
			\includegraphics[width=\linewidth]{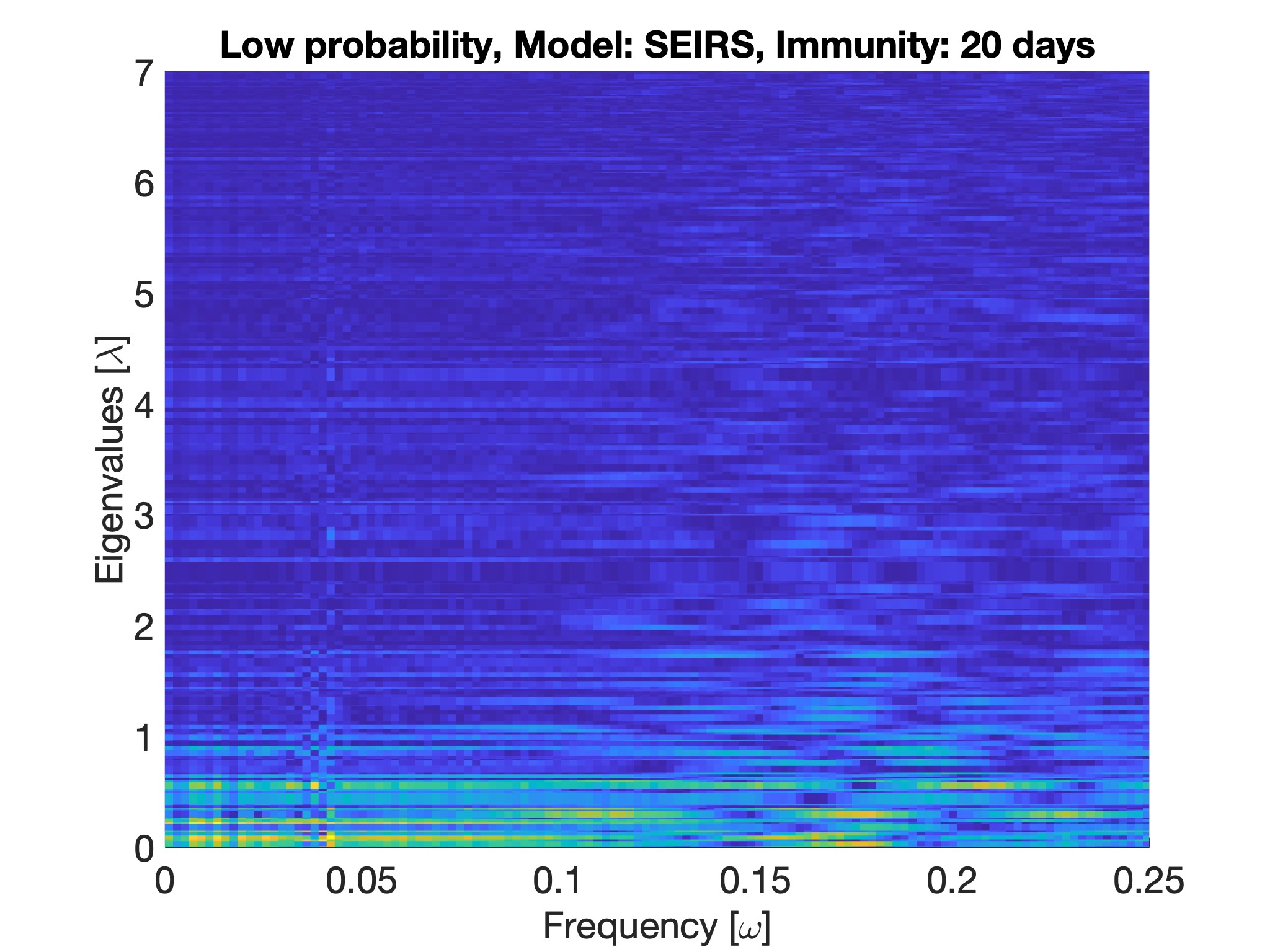}
			\vspace{-20pt} 
			\caption*{\tiny (g) Low probability of $\alpha=1, \beta=0.1$.}
		\end{minipage}
		\hfill
		\begin{minipage}[t]{0.18\linewidth}
			\centering
			\includegraphics[width=\linewidth]{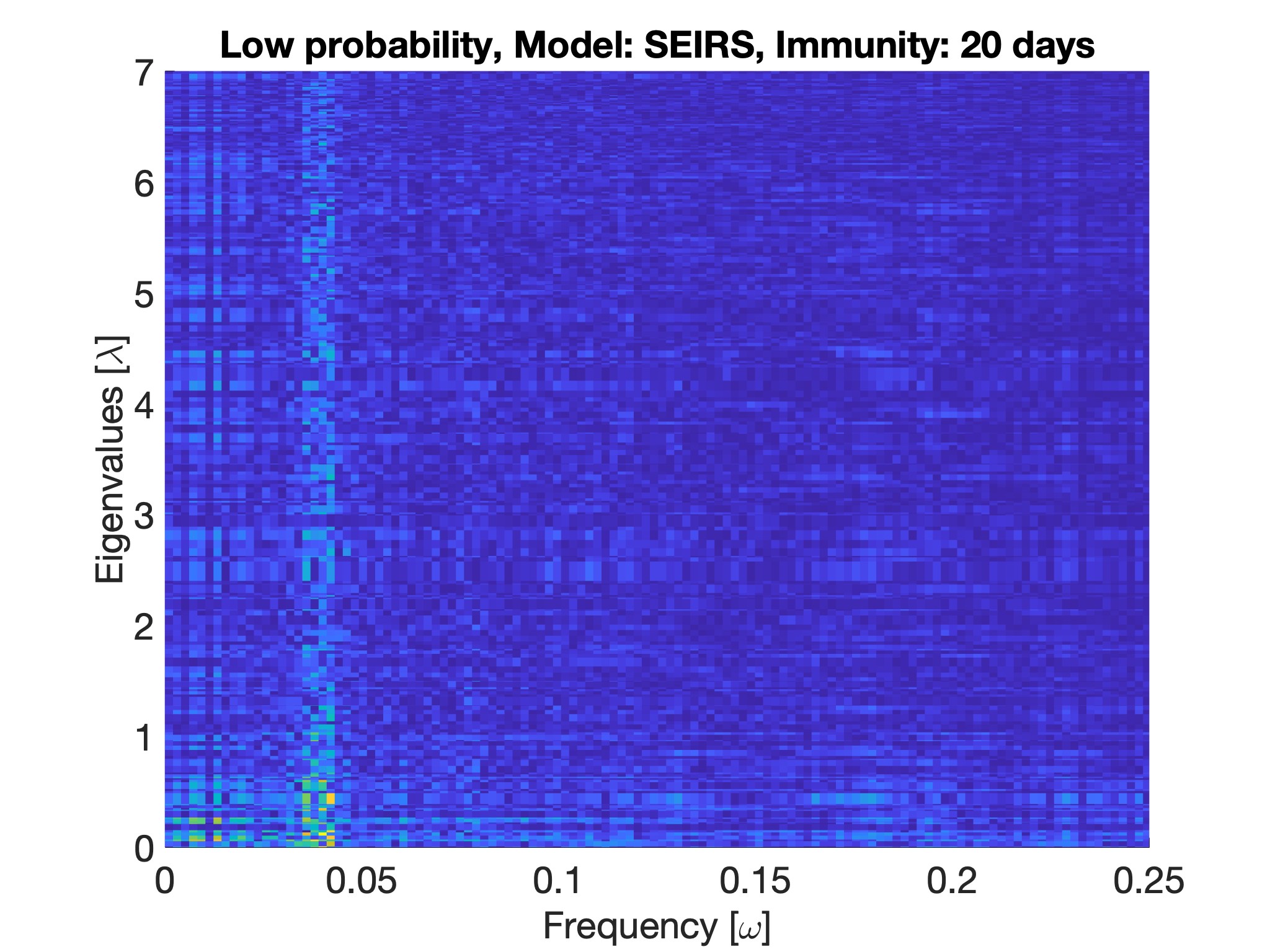}
			\vspace{-20pt} 
			\caption*{\tiny (h) Low probability of $\alpha=0.5, \beta=0.5$.}
		\end{minipage}
		\hfill
		\begin{minipage}[t]{0.18\linewidth}
			\centering
			\includegraphics[width=\linewidth]{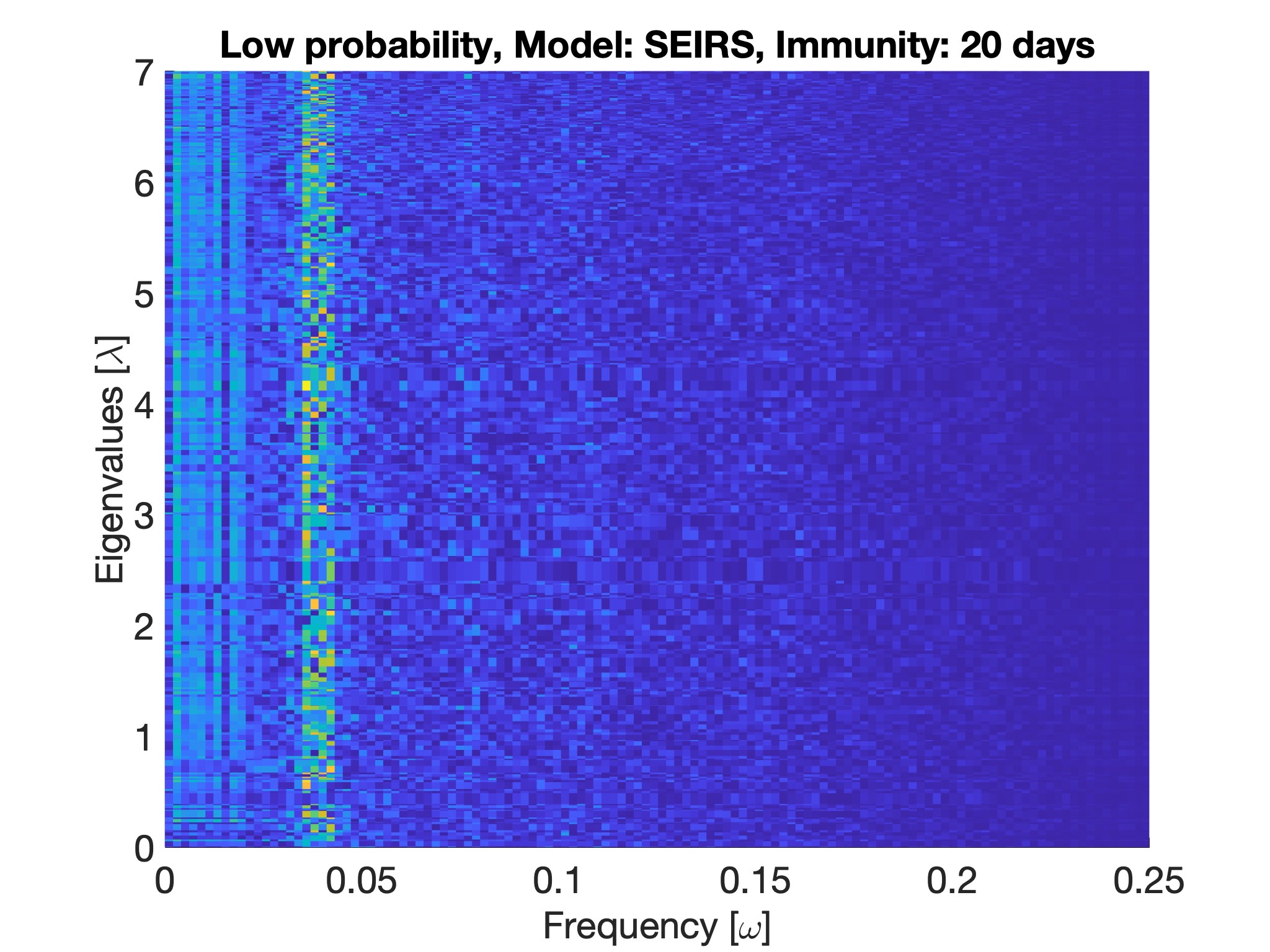}
			\vspace{-20pt} 
			\caption*{\tiny (i) Low probability of $\alpha=0.1, \beta=1$.}
		\end{minipage}
		\hfill
		\begin{minipage}[t]{0.18\linewidth}
			\centering
			\includegraphics[width=\linewidth]{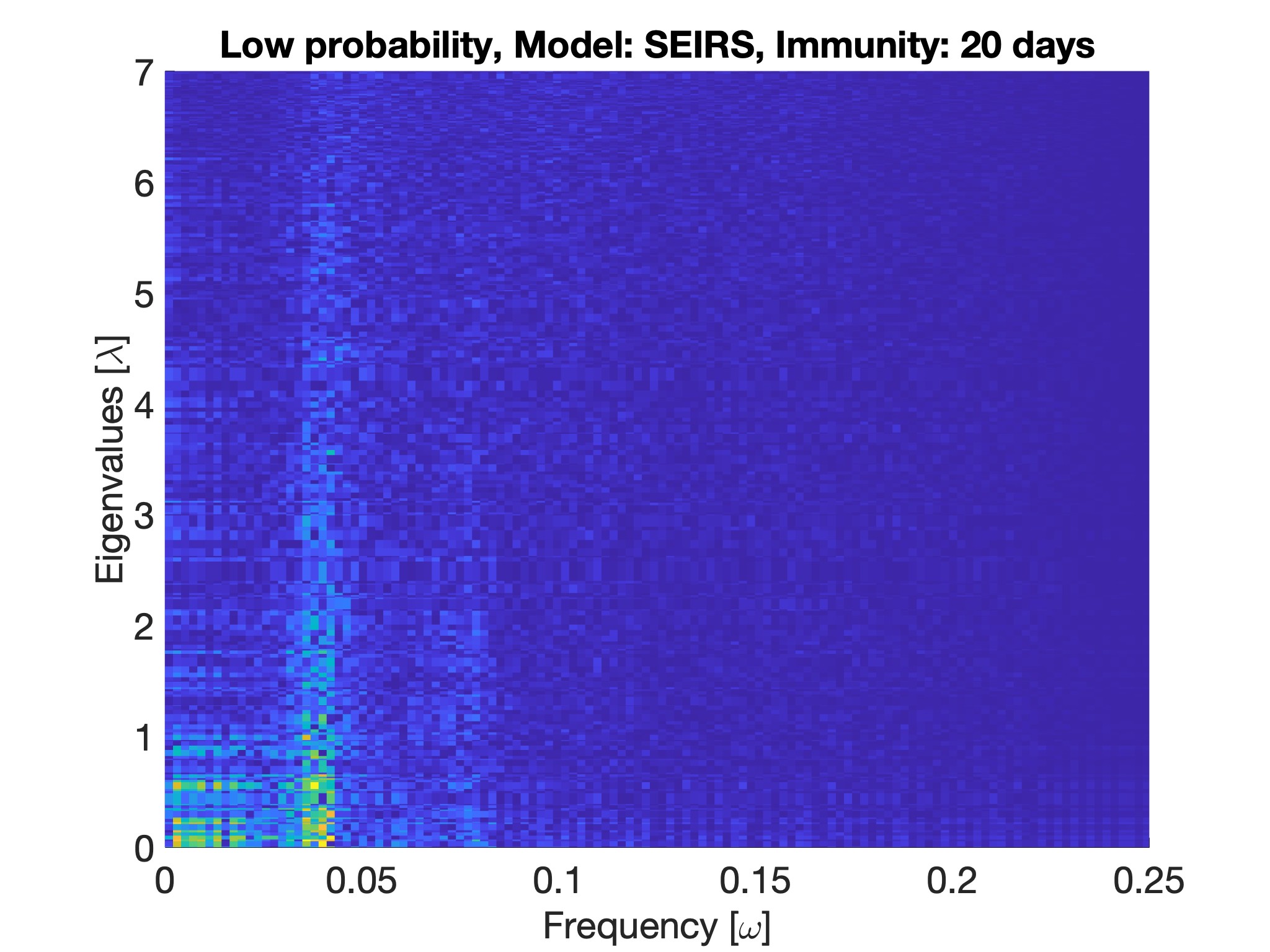}
			\vspace{-20pt} 
			\caption*{\tiny (j) Low probability of $\alpha=1, \beta=1$.}
		\end{minipage}
		\caption{The HGFRFT of the simulated epidemic breakouts for the SEIRS model.}
		\label{fig8}
	\end{figure*}
	
\subsubsection{Comparison of Energy Compactness}
In this study, we selected energy compactness as the primary metric to compare the HGFRFT, the DFRFT ($\mathcal{H}^{\alpha}_f$-transform), and the GFRFT ($\mathcal{G}^{\beta}_f$-transform) because it effectively measures the concentration of signal energy in the transform domain. Energy compactness quantifies how well the signal energy is concentrated in a small number of transform coefficients, which is crucial for applications such as compression, denoising, and feature extraction. By calculating the normalized error after discarding transform coefficients below a certain percentile threshold, we can assess the efficiency of each transform in compressing signal energy. Higher energy compactness indicates that the signal energy is concentrated in fewer coefficients, leading to improved processing performance.

As shown in Figs. \ref{fig9}(a)–(d), based on the SEIRS model dataset for European infection numbers, the results demonstrate that as $\alpha$ decreases, the behavior of DFRFT gradually converges to that of HGFRFT; similarly, as $\beta$ decreases, GFRFT approaches HGFRFT. This analysis highlights the advantages of HGFRFT in terms of energy concentration and flexibility.
	\begin{figure}[h]
		\centering
		\begin{minipage}[t]{0.47\linewidth}
			\centering
			\includegraphics[width=\linewidth]{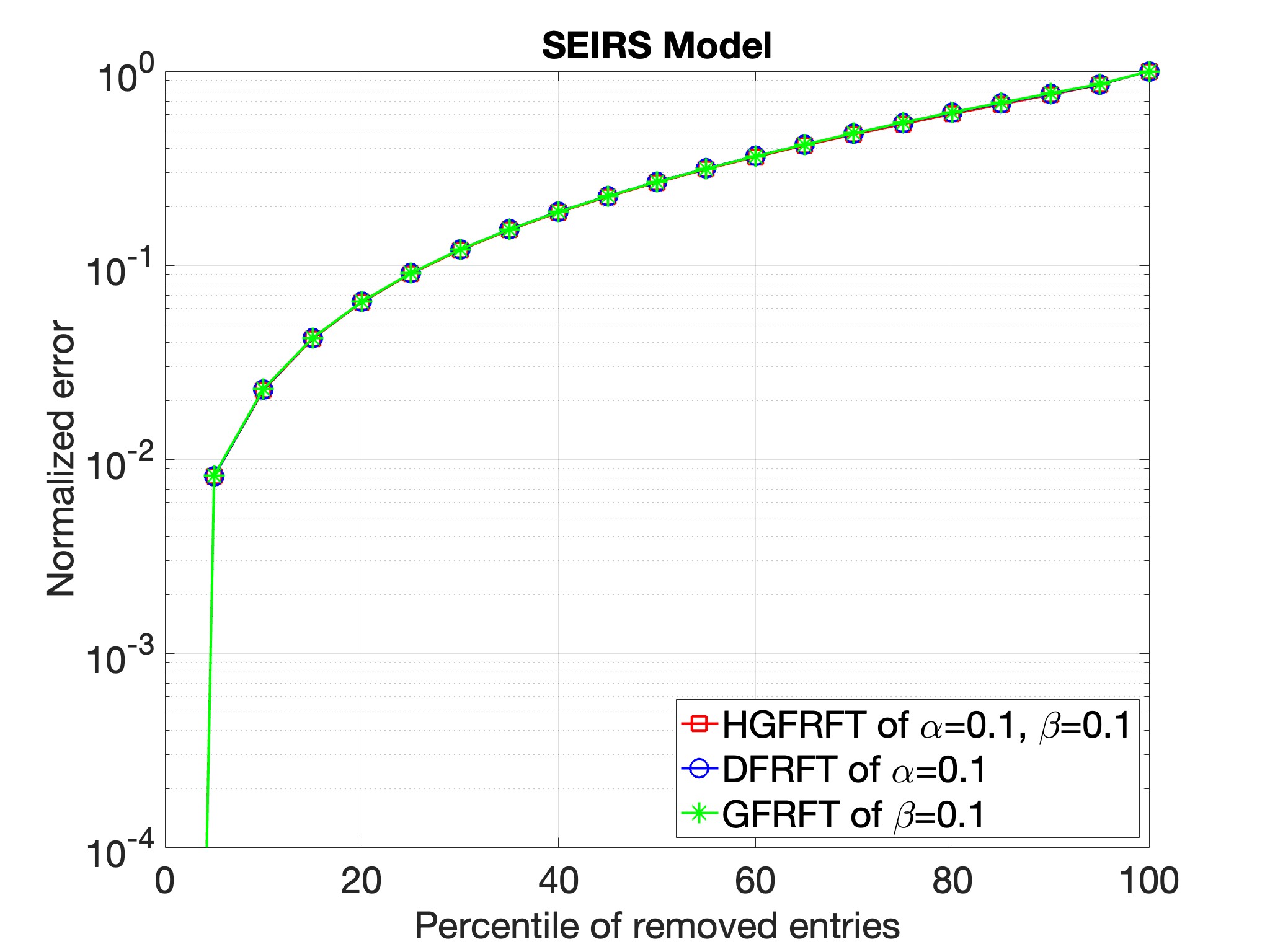}
			\vspace{-20pt} 
			\caption*{\tiny (a) The epidemic model of $\alpha=0.1, \beta=0.1$.}
		\end{minipage}
		\begin{minipage}[t]{0.47\linewidth}
			\centering
			\includegraphics[width=\linewidth]{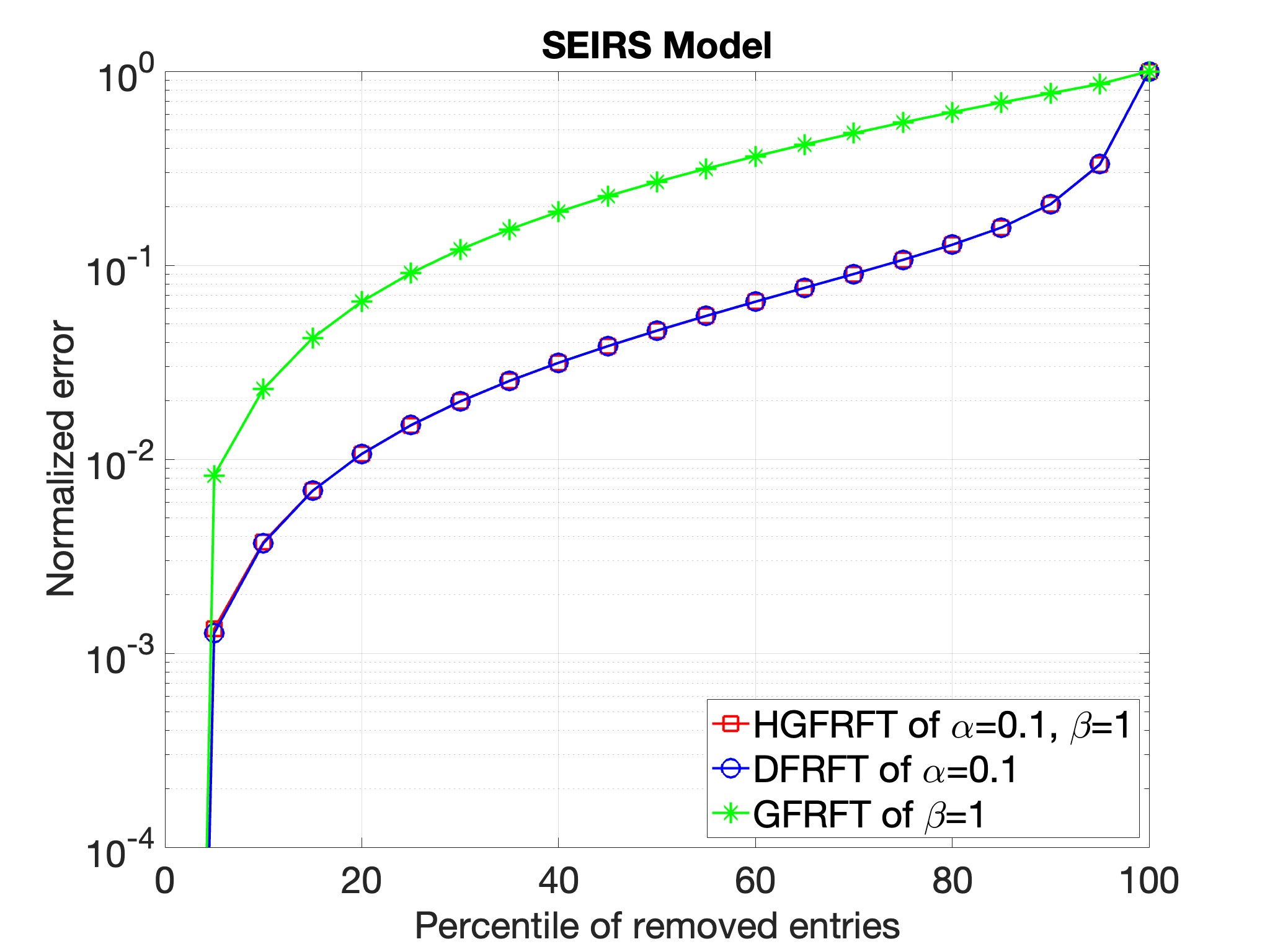}
			\vspace{-20pt} 
			\caption*{\tiny (b) The epidemic model of $\alpha=0.1, \beta=1$.}
		\end{minipage}
		\begin{minipage}[t]{0.47\linewidth}
			\centering
			\includegraphics[width=\linewidth]{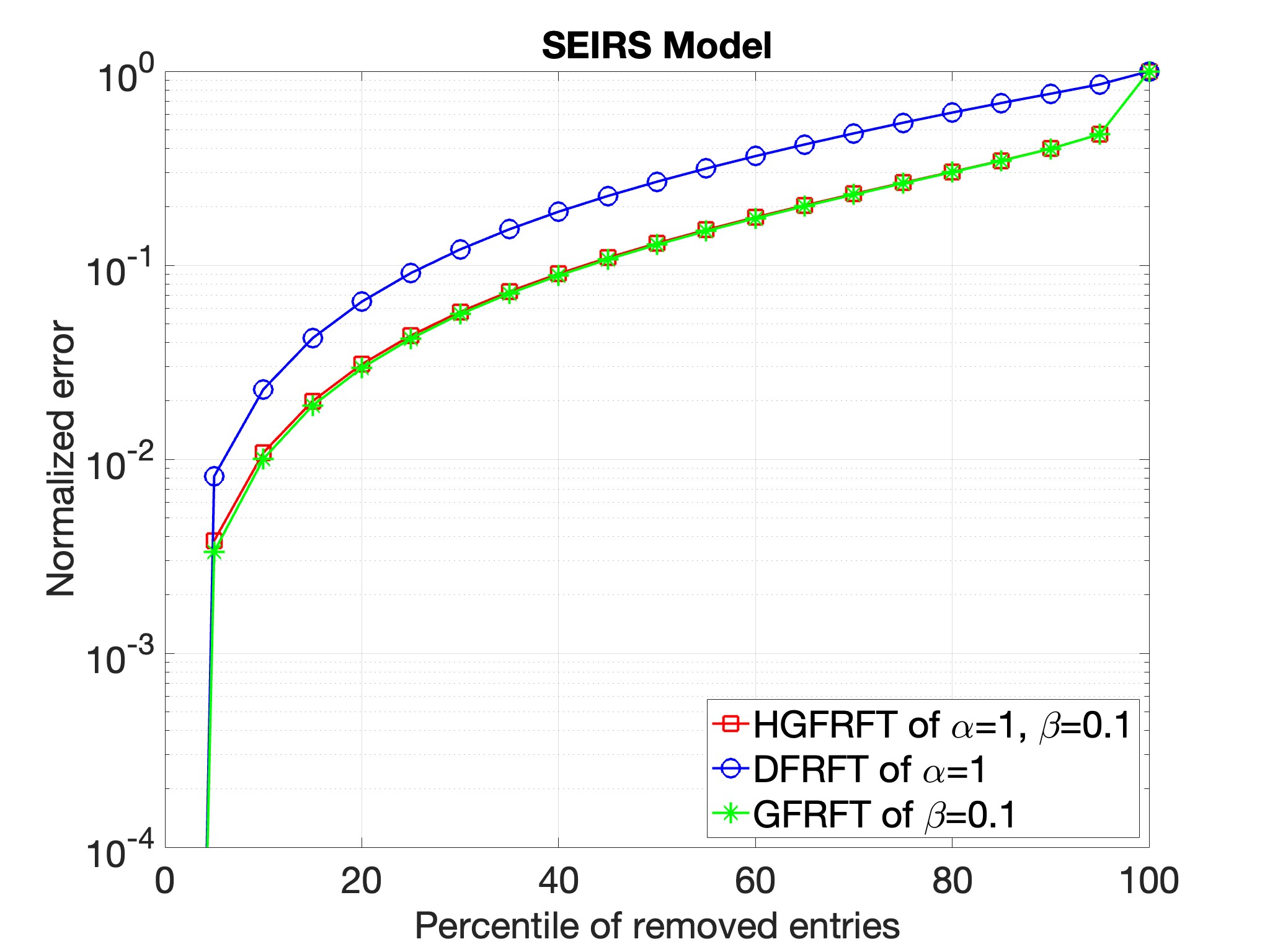}
			\vspace{-20pt} 
			\caption*{\tiny (c) The epidemic model of $\alpha=1, \beta=0.1$.}
		\end{minipage}
		\begin{minipage}[t]{0.47\linewidth}
			\centering
			\includegraphics[width=\linewidth]{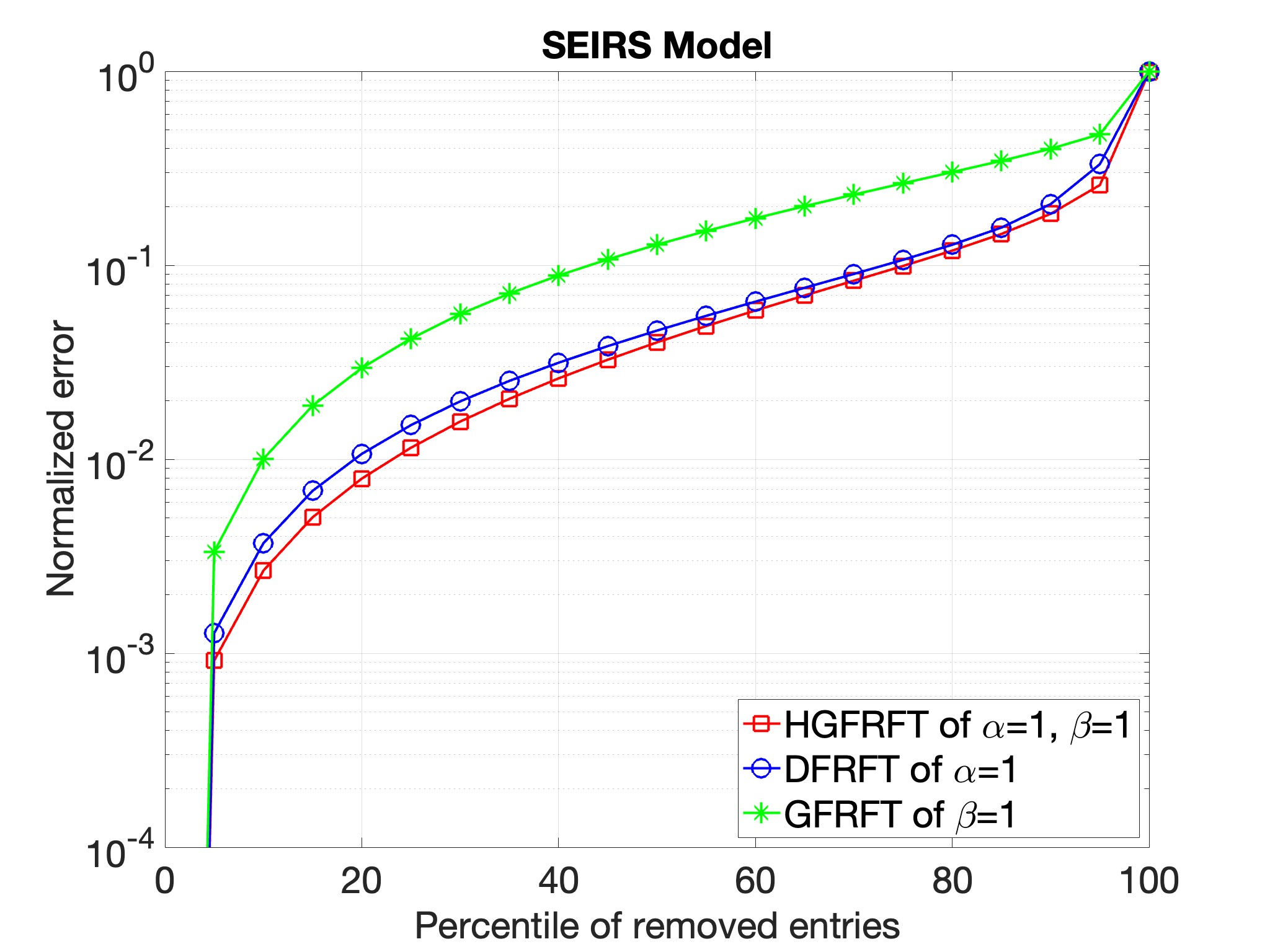}
			\vspace{-20pt} 
			\caption*{\tiny (d) The epidemic model of $\alpha=1, \beta=1$.}
		\end{minipage}
		\caption{Compactness of different datasets.}
		\label{fig9}
	\end{figure}

\subsubsection{Comparison with The JFRFT}
%
%
This study demonstrates the application of the HGFRFT and the JFRFT \cite{JFRFT} to the SEIRS model of epidemic propagation in networks in spectral analysis, leading to a refined evaluation of the signal propagation properties on the graph.

We define a step function $f(\cdot,v) \in L^{2}(\Omega)$ for each node $v \in \mathcal{V}$ and compute its HGFRFT and JFRFT. First, the HGFRFT is applied via the $\mathcal{H}^{\alpha}_{f}$-transform, leveraging the FRFT properties of the rectangular function, where the FRFT acts as an interpolation between the time and frequency domains representations. As the order $\alpha$ of the FRFT varies from $0 \to 1$, the rectangular function gradually transitions into a $\mathrm{sinc}$ function, indicating a smooth transition between the time and frequency domains. In contrast, the JFRFT divides the observation period into uniform intervals, recording the state of each node at the beginning of each time slot as the graph signal, followed by the application of DFRFT to these discrete signals. Finally, both methods are subjected to GFRFT, resulting in the joint spectrum of the transformed signals. Notably, $f(\cdot, v)$ is not time-bandlimited for any $v \in \mathcal{V}$, implying that discrete sampling alone cannot fully reconstruct the original signal.

The dark blue color in Fig. \ref{fig10} indicates that HGFRFT or JFRFT have lower magnitudes. The HGFRFT shows more pronounced spectral dispersion in high-energy regions than JFRFT, reflecting its use of the full time-series data at each node, whereas JFRFT samples discrete intervals. Furthermore, Figs. \ref{fig10}(a)-(c) illustrate how variations in $\alpha$ and $\beta$ enhance spectral energy visibility, akin to patterns seen in Fig. \ref{fig8}.
\begin{figure}[h!]
	\begin{center}
		\begin{minipage}[t]{0.32\linewidth}
			\centering
			\includegraphics[width=\linewidth]{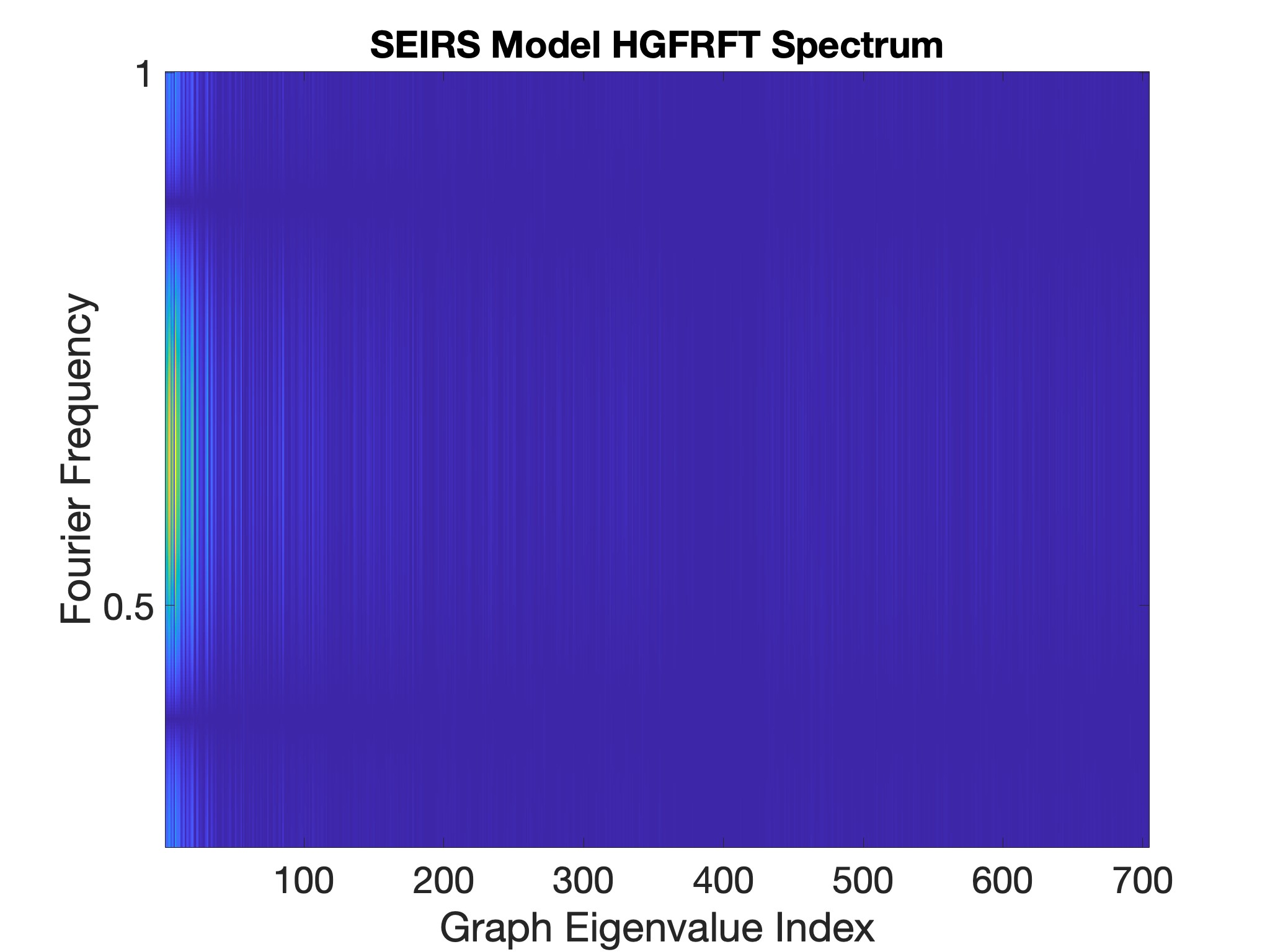}
			\parbox{2.6cm}{\tiny (a) The HGFRFT of $\alpha=1, \beta=1$.}
		\end{minipage}
		\begin{minipage}[t]{0.32\linewidth}
			\centering
			\includegraphics[width=\linewidth]{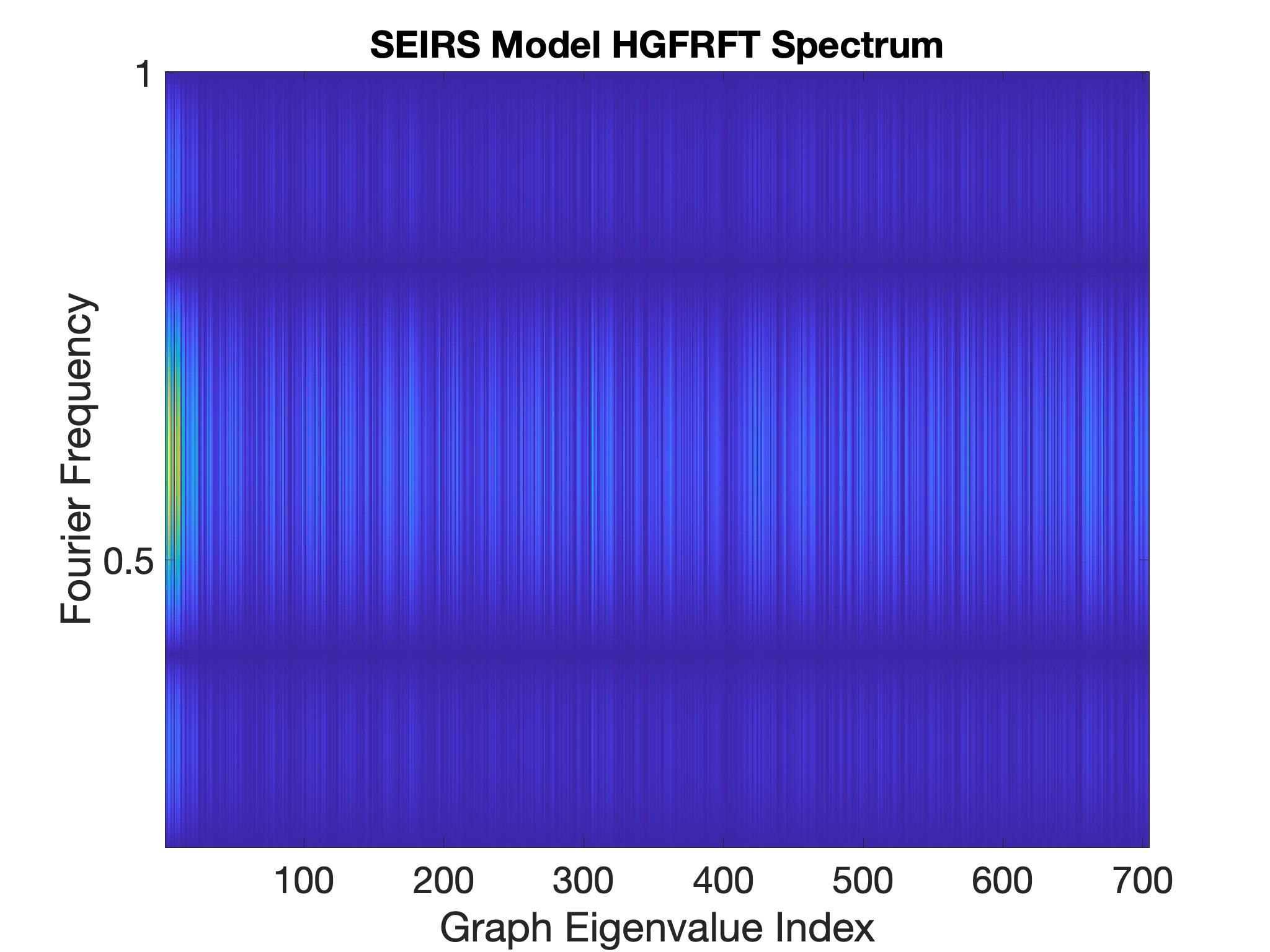}
			\parbox{2.6cm}{\tiny (b) The HGFRFT of $\alpha=1, \beta=0.5$.}
		\end{minipage}
		\begin{minipage}[t]{0.32\linewidth}
			\centering
			\includegraphics[width=\linewidth]{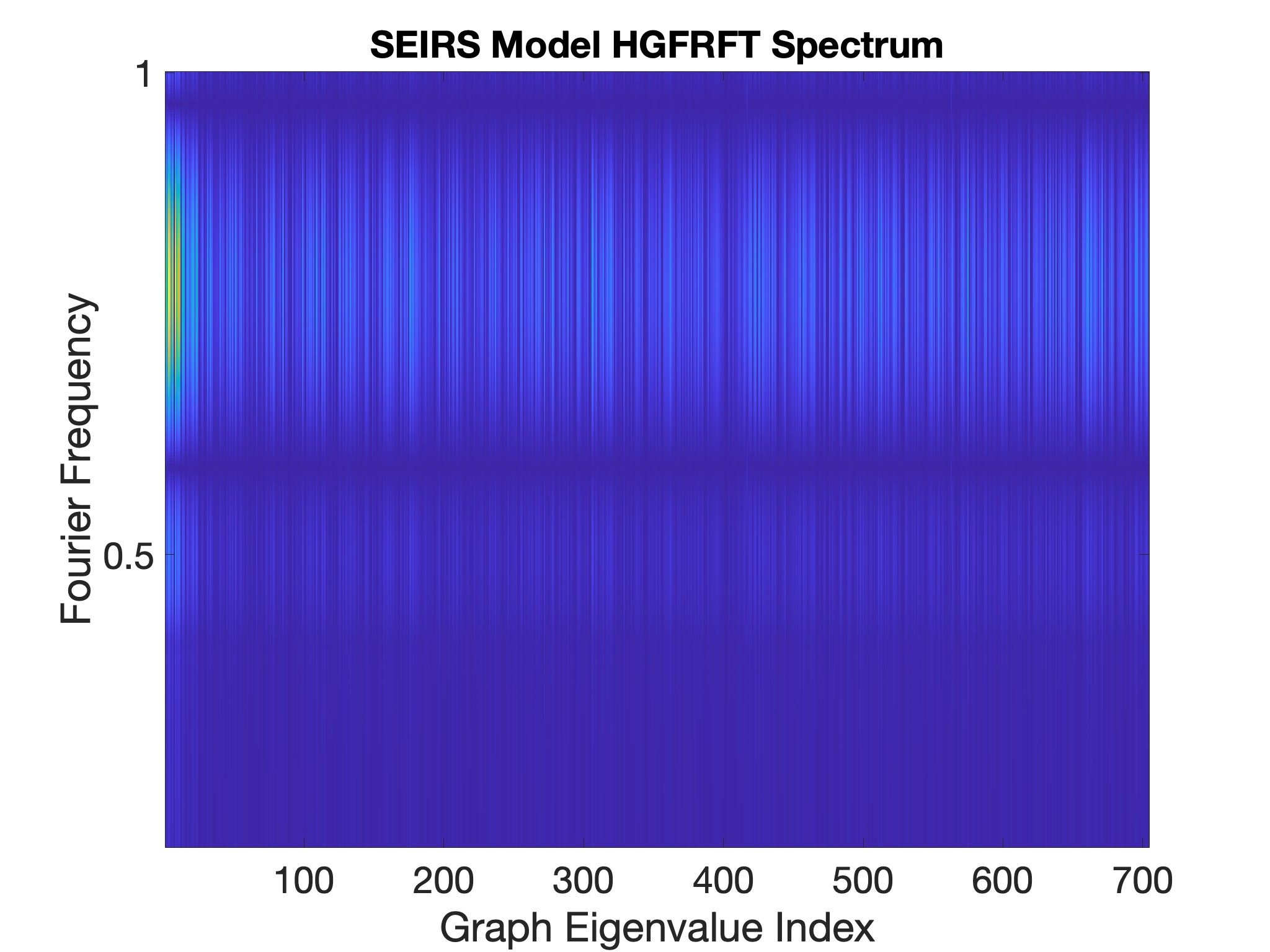}
			\parbox{2.6cm}{\tiny (c) The HGFRFT of $\alpha=0.7, \beta=0.5$.}
		\end{minipage}
		\begin{minipage}[t]{0.32\linewidth}
			\centering
			\includegraphics[width=\linewidth]{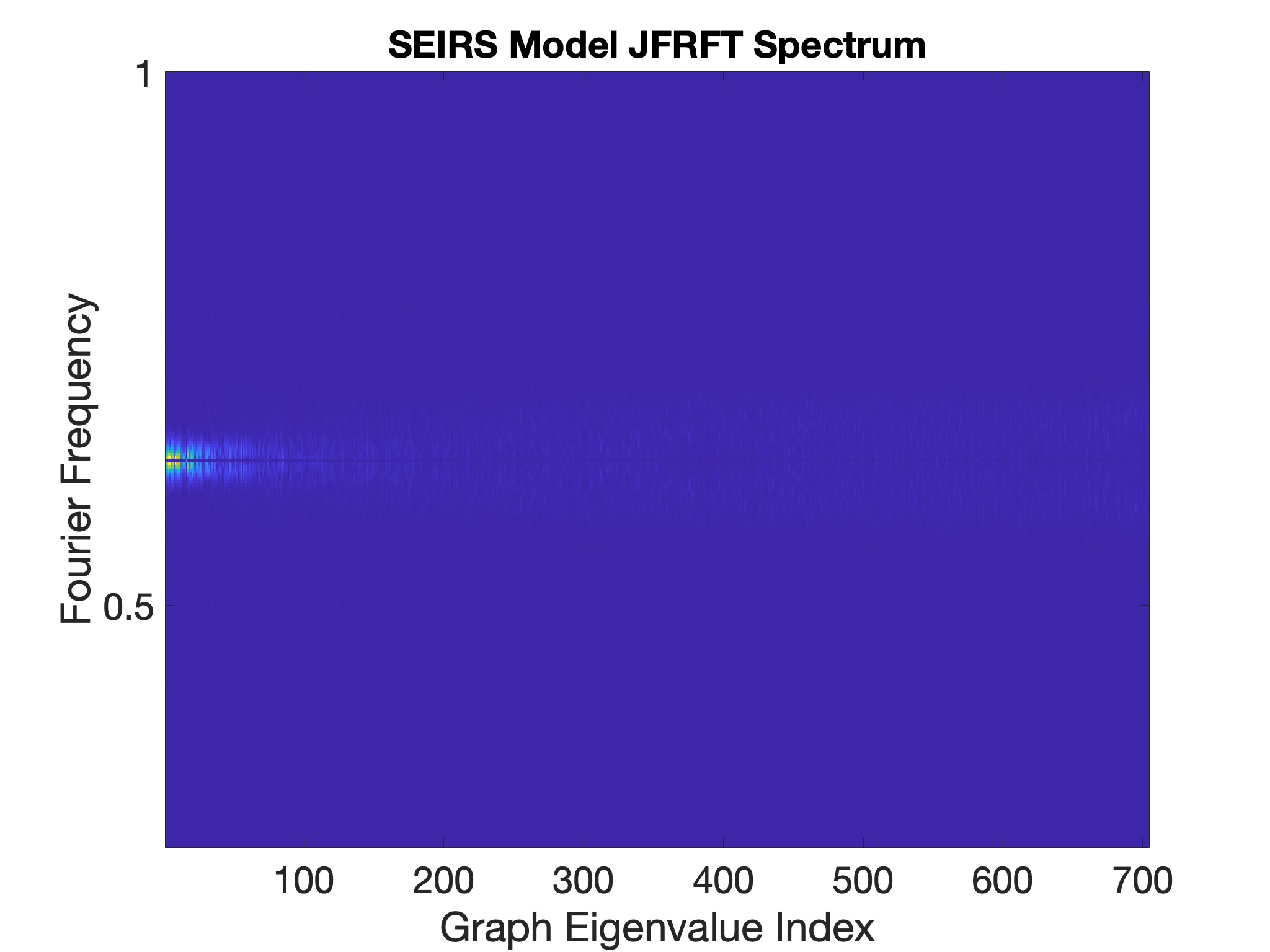}
			\parbox{2.6cm}{\tiny (d) The JFRFT of $\alpha=1, \beta=1$.}
		\end{minipage}
			\begin{minipage}[t]{0.32\linewidth}
			\centering
			\includegraphics[width=\linewidth]{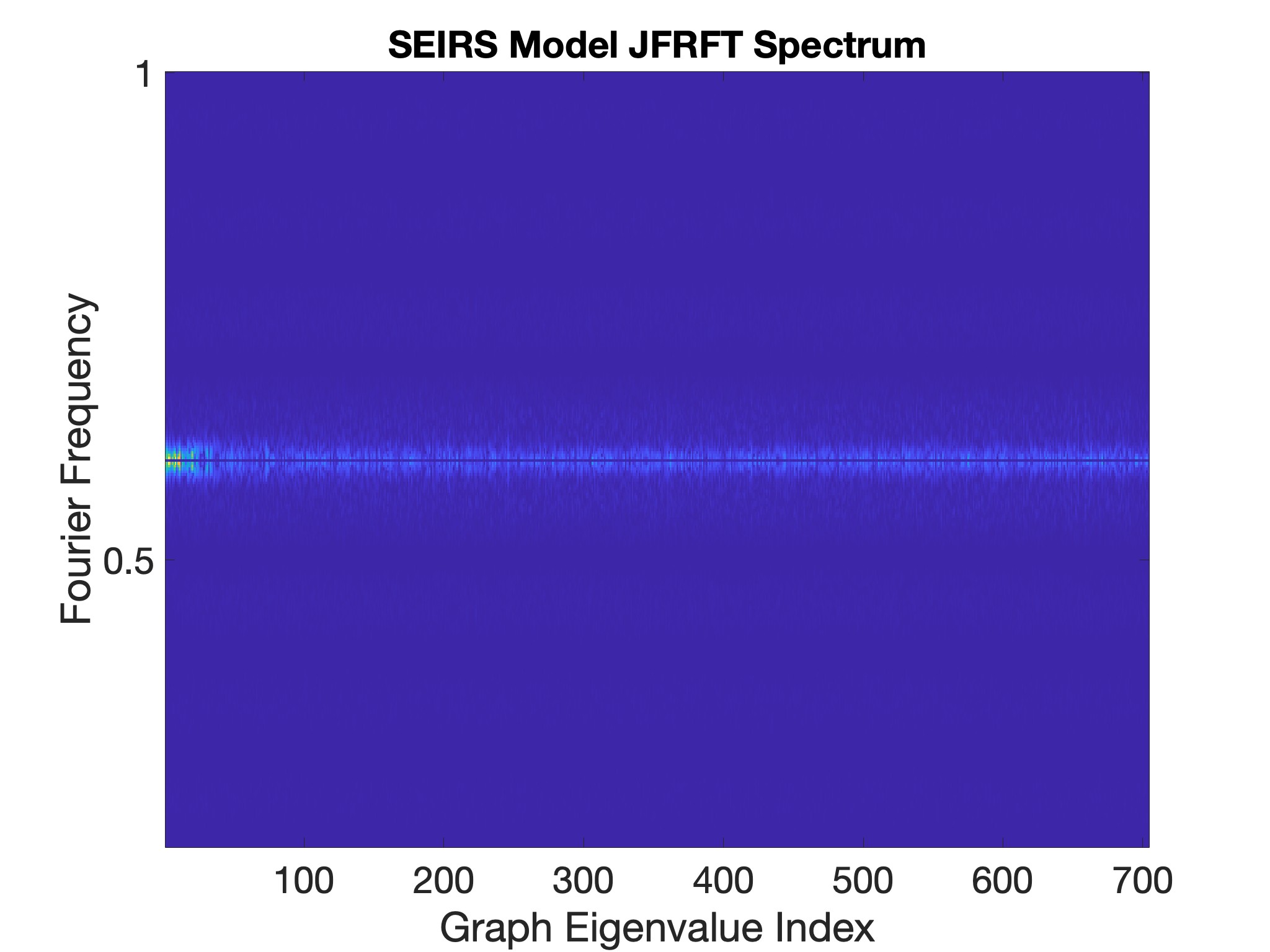}
			\parbox{2.6cm}{\tiny (e) The JFRFT of $\alpha=1, \beta=0.5$.}
		\end{minipage}
			\begin{minipage}[t]{0.32\linewidth}
			\centering
			\includegraphics[width=\linewidth]{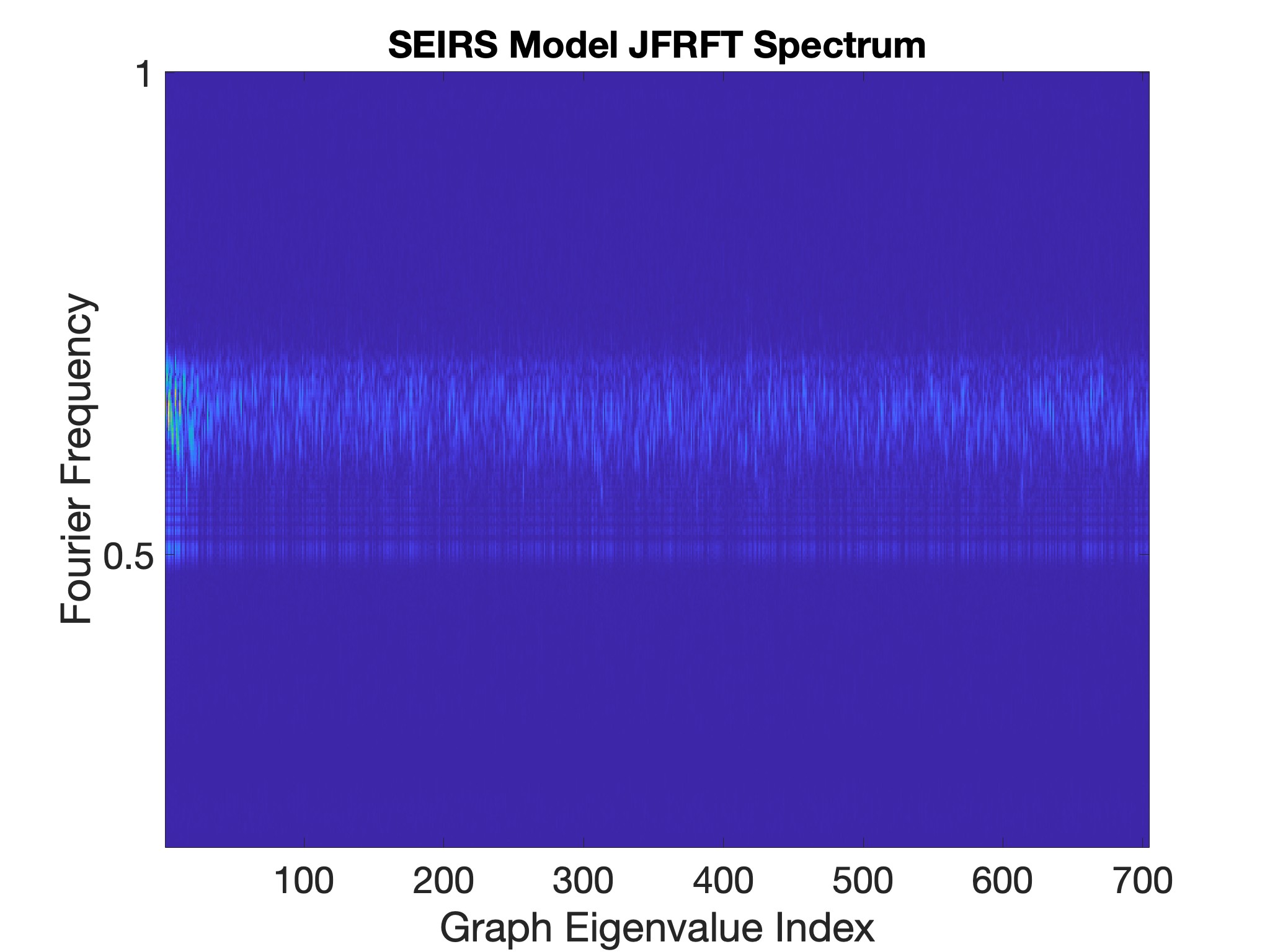}
			\parbox{2.6cm}{\tiny (f) The JFRFT of $\alpha=0.7, \beta=0.5$.}
		\end{minipage}
	\end{center}
	\caption{The HGFRFT and the JFRFT spectrum of the SEIRS model.}
	\vspace*{-3pt}
	\label{fig10}
\end{figure}

\subsection{Radar Signal Processing on Graphs}\label{sec6.3}
In radar signal processing, analyzing and handling bandlimited signals is crucial in various fields such as weather monitoring, seismic wave analysis, and communication systems. Traditional signal processing approaches rely on time and frequency domain analysis, but in high-dimensional data environments, radar signals are initially modeled in an infinite-dimensional Hilbert space \cite{Radar,Hilbertchirp}. This infinite-dimensional representation captures the continuity and theoretical completeness of signals.

However, practical limitations imposed by physical devices and computational demands prevent direct manipulation of infinite-dimensional signals. Thus, these signals are projected onto a finite-dimensional Hilbert space for processing \cite{HGFT,Hilbert}. In Hilbert space, a signal can be represented as an orthogonal basis expansion: $f(t) = \sum_{n=1}^{\infty} c_n \psi_n(t)$, where $\psi_n(t)$ is a set of orthonormal basis functions, and $c_n$ are the projection coefficients. According to the Nyquist sampling theorem \cite{Shannon}, the signal is mapped from the infinite-dimensional Hilbert space $\mathcal{H}$ to a finite-dimensional subspace $ \mathcal{H}_M$, represented as a vector of length $M$.

\subsubsection{Spectral Representation of Radar Signals on Graphs}
In this study, each radar signal is sampled at 200 points, producing a 200-dimensional vector. This discretization effectively maps the infinite-dimensional signal into a finite-dimensional vector, retaining its key time-frequency characteristics. By this process, we can analyze and manipulate the signals within a finite-dimensional Hilbert space.

Since Alaska and Hawaii are located too far from other states, we only model the radar signals on a graph $\mathcal{G}$ representing the geographical layout of the 48 contiguous U.S. states, with each node corresponding to one state \cite{USA}. The graph is constructed by connecting neighboring states with undirected edges of unit weight, indicating direct geographical adjacency. Each state’s radar signal is a time series (chirp signal), and the signal on each node is defined by
\[
x_i(t) = \exp\left(2j \pi \left(f_{0,i} t + 0.5 \mu_i t^2 \right)\right),
\]
where $f_{0,i}$ is the initial frequency of the $i$-th node and $\mu_i$ is the chirp rate. Each signal consists of 200 sample points. The initial frequency is set at $f_0 = 50$ Hz, with an initial bandwidth of $B = 150$ Hz and a time duration of 0.2 seconds. For each node, the initial frequency and bandwidth evolve as $f_{0,i} = f_0 + 5i$ and $B_i = B + 10i$, respectively.

The chirp signals at some vertices are transformed using Eq. \eqref{Halpha}. Figs. \ref{fig11}(a) and (b) show the spectrum of the signals at vertices 16 and 48, respectively. Figs. \ref{fig11}(c) and (d) display the spectrum of these signals after applying the $\mathcal{H}_{f}$-transform. Finally, Figs. \ref{fig11}(e) and (f) present the spectrum of the signals from (a) and (b) after applying the optimal $\mathcal{H}^{\alpha}_{f}$-transform, which effectively processes chirp signals by providing clearer distinctions in frequency variations and achieving higher frequency resolution.
\begin{figure}[h!]
	\begin{center}
		\begin{minipage}[t]{0.47\linewidth}
			\centering
			\includegraphics[width=\linewidth]{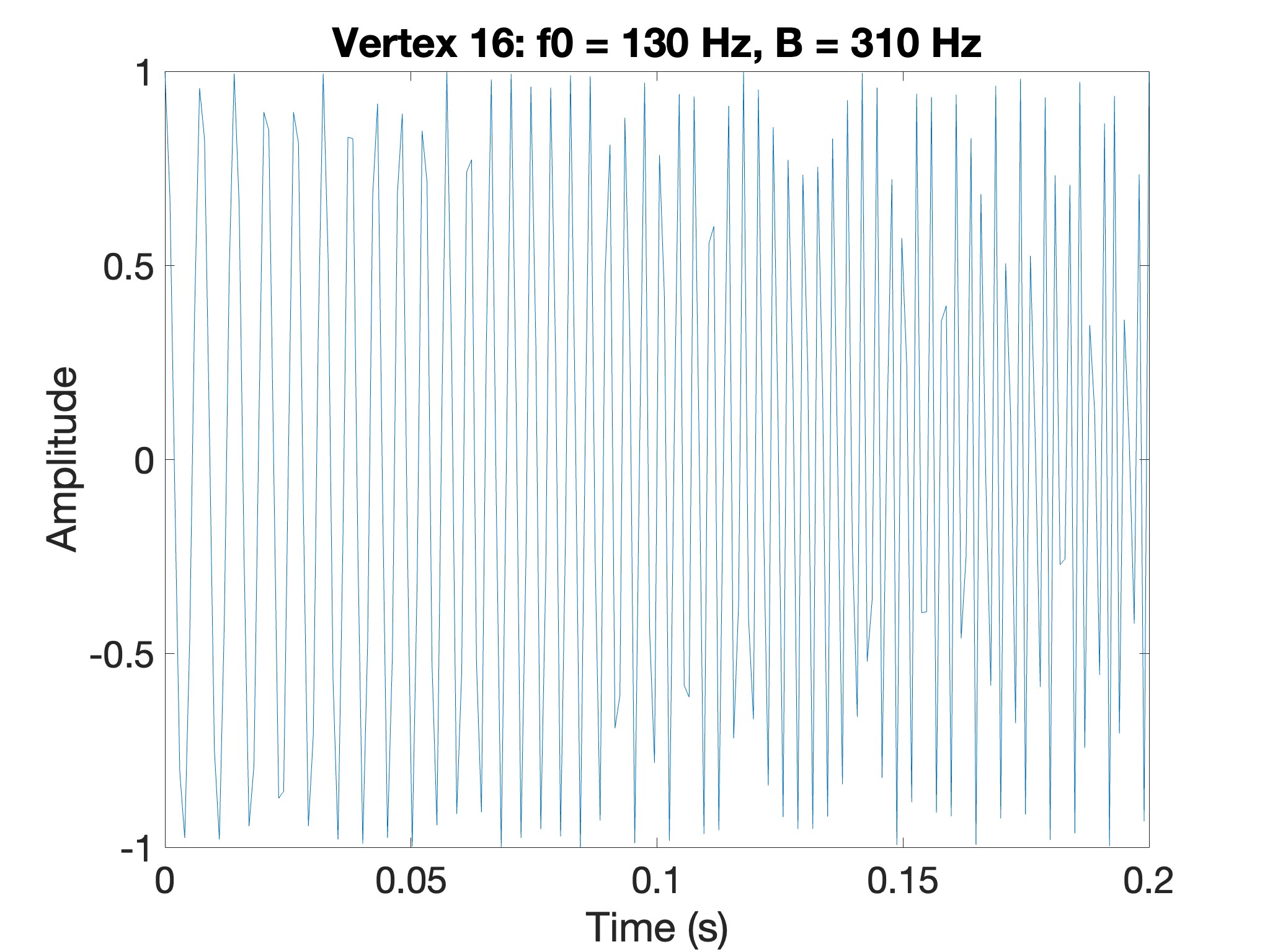}
			\parbox{4cm}{\tiny (a) Signal at vertex 16, $f_0=130$ Hz, $B=310$ Hz.}
		\end{minipage}
		\begin{minipage}[t]{0.47\linewidth}
			\centering
			\includegraphics[width=\linewidth]{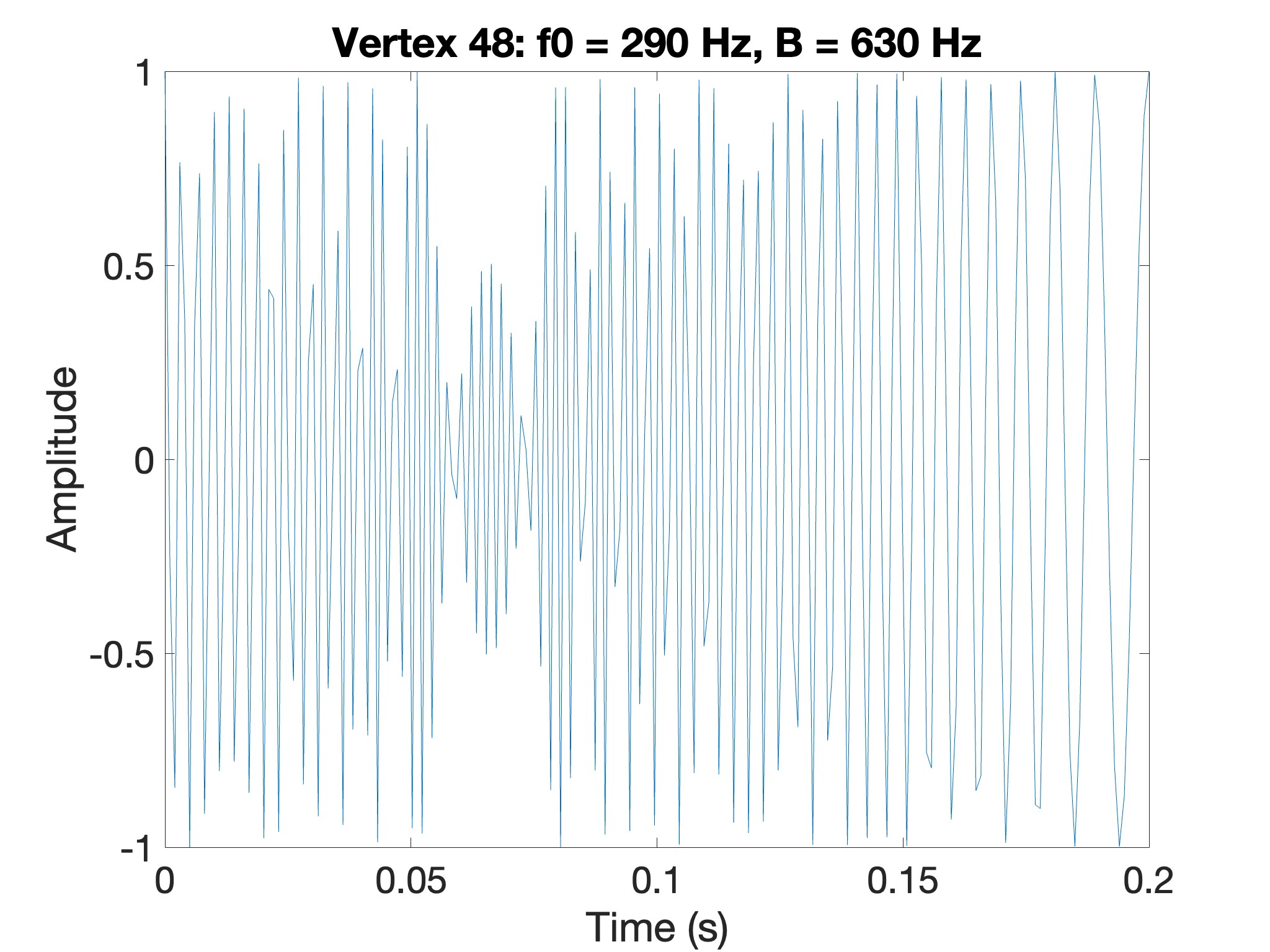}
			\parbox{4cm}{\tiny (b) Signal at vertex 48, $f_0=290$ Hz, $B=630$ Hz.}
		\end{minipage}
		\begin{minipage}[t]{0.47\linewidth}
			\centering
			\includegraphics[width=\linewidth]{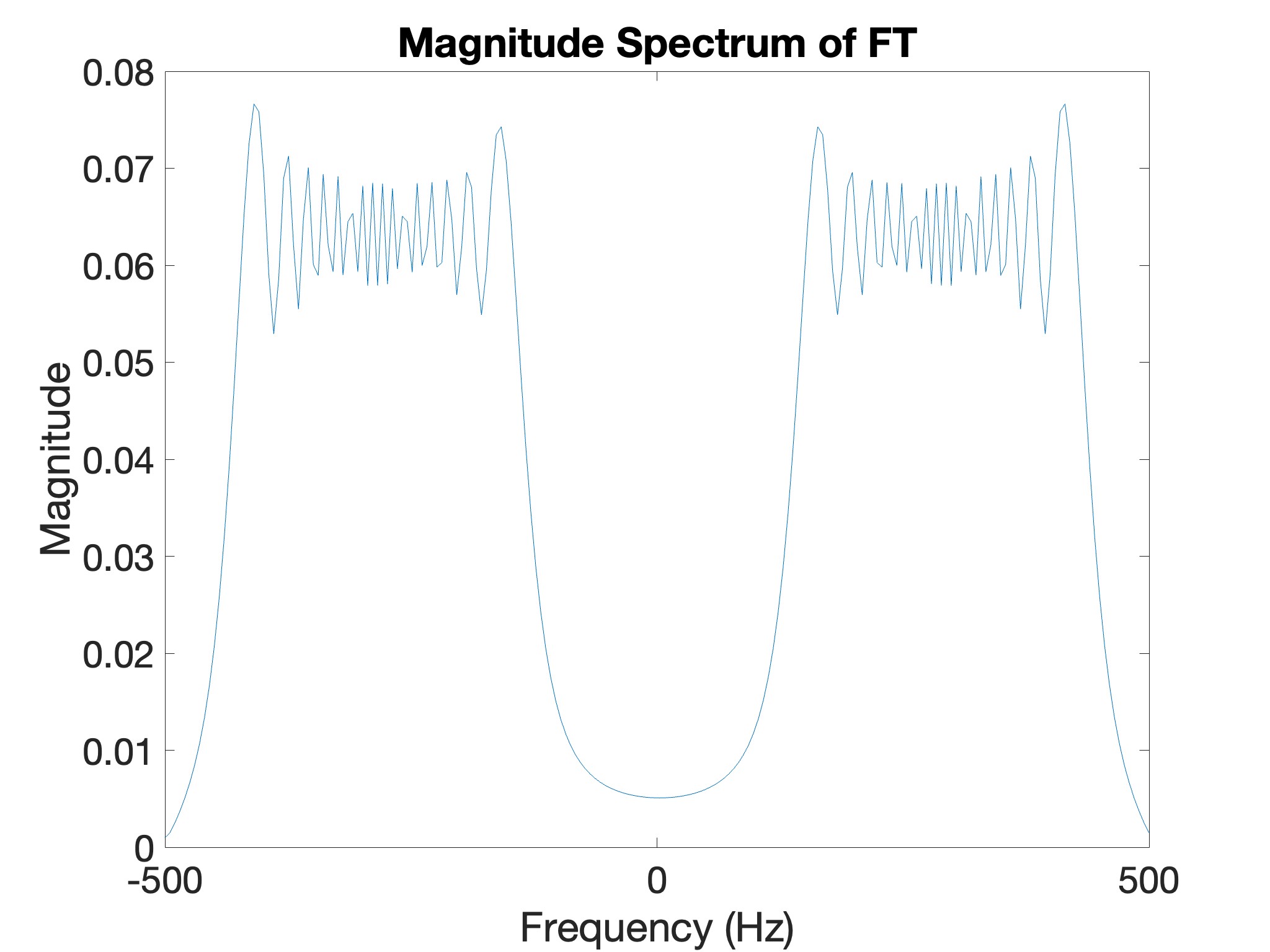}
			\parbox{4cm}{\tiny (c) Spectrum of the signal at vertex 16 after $\mathcal{H}_f$-transform.}
		\end{minipage}
		\begin{minipage}[t]{0.47\linewidth}
			\centering
			\includegraphics[width=\linewidth]{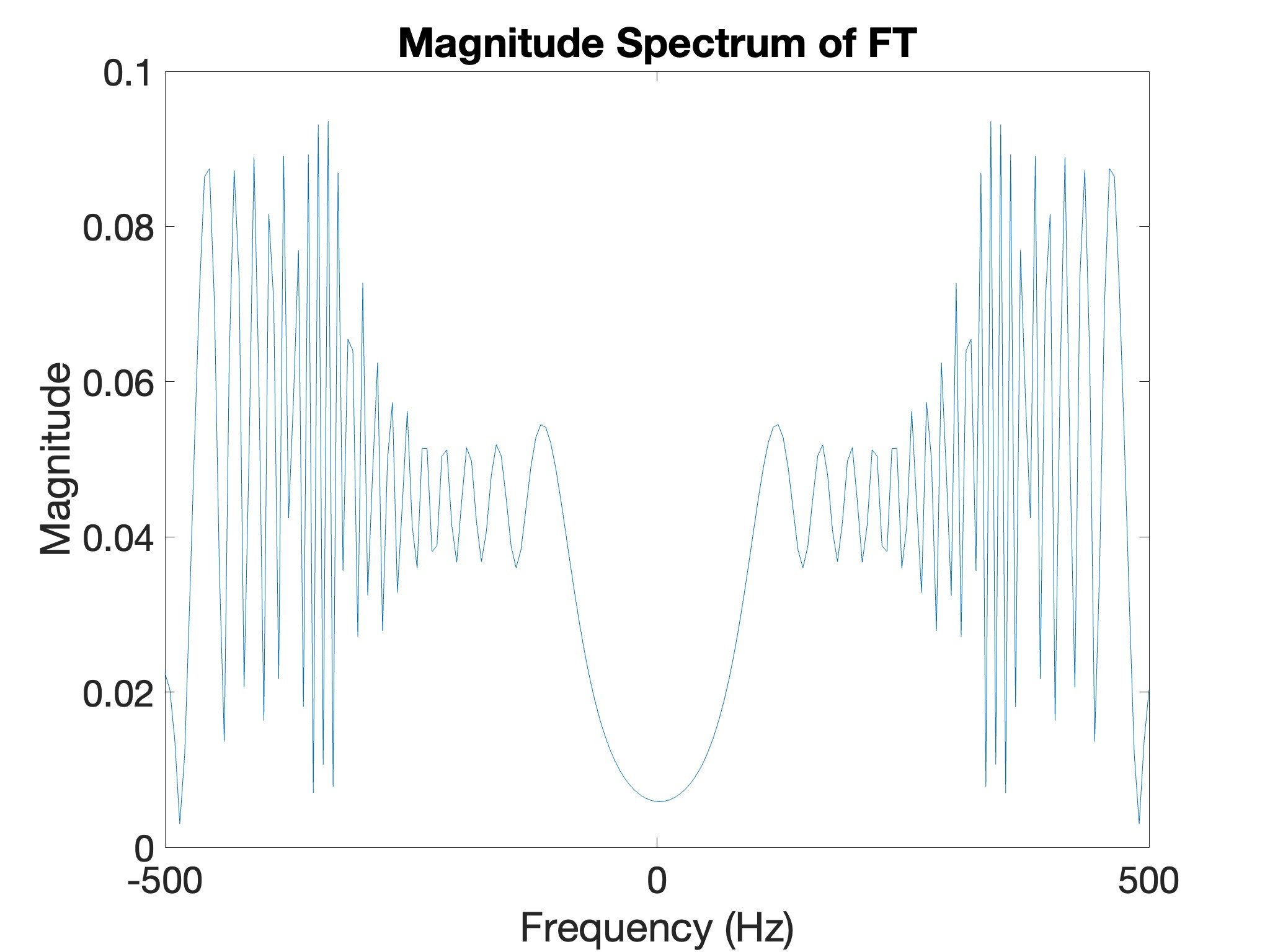}
			\parbox{4cm}{\tiny (d) Spectrum of the signal at vertex 48 after $\mathcal{H}_f$-transform.}
		\end{minipage}
		\begin{minipage}[t]{0.47\linewidth}
			\centering
			\includegraphics[width=\linewidth]{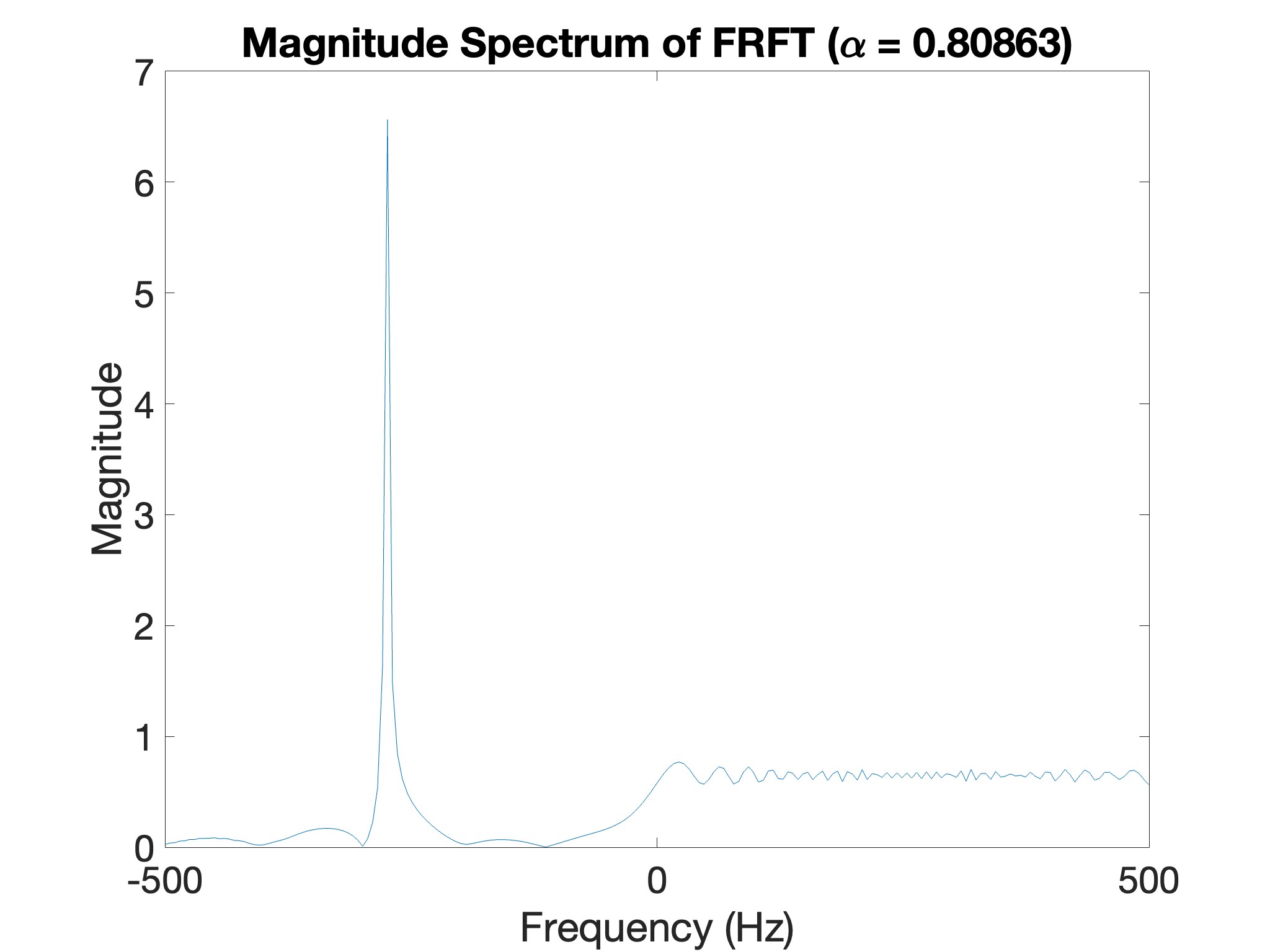}
			\parbox{4cm}{\tiny (e) Spectrum of the signal at vertex 16 after $\mathcal{H}^{\alpha}_f$-transform with $\alpha=0.81$.}
		\end{minipage}
		\begin{minipage}[t]{0.47\linewidth}
			\centering
			\includegraphics[width=\linewidth]{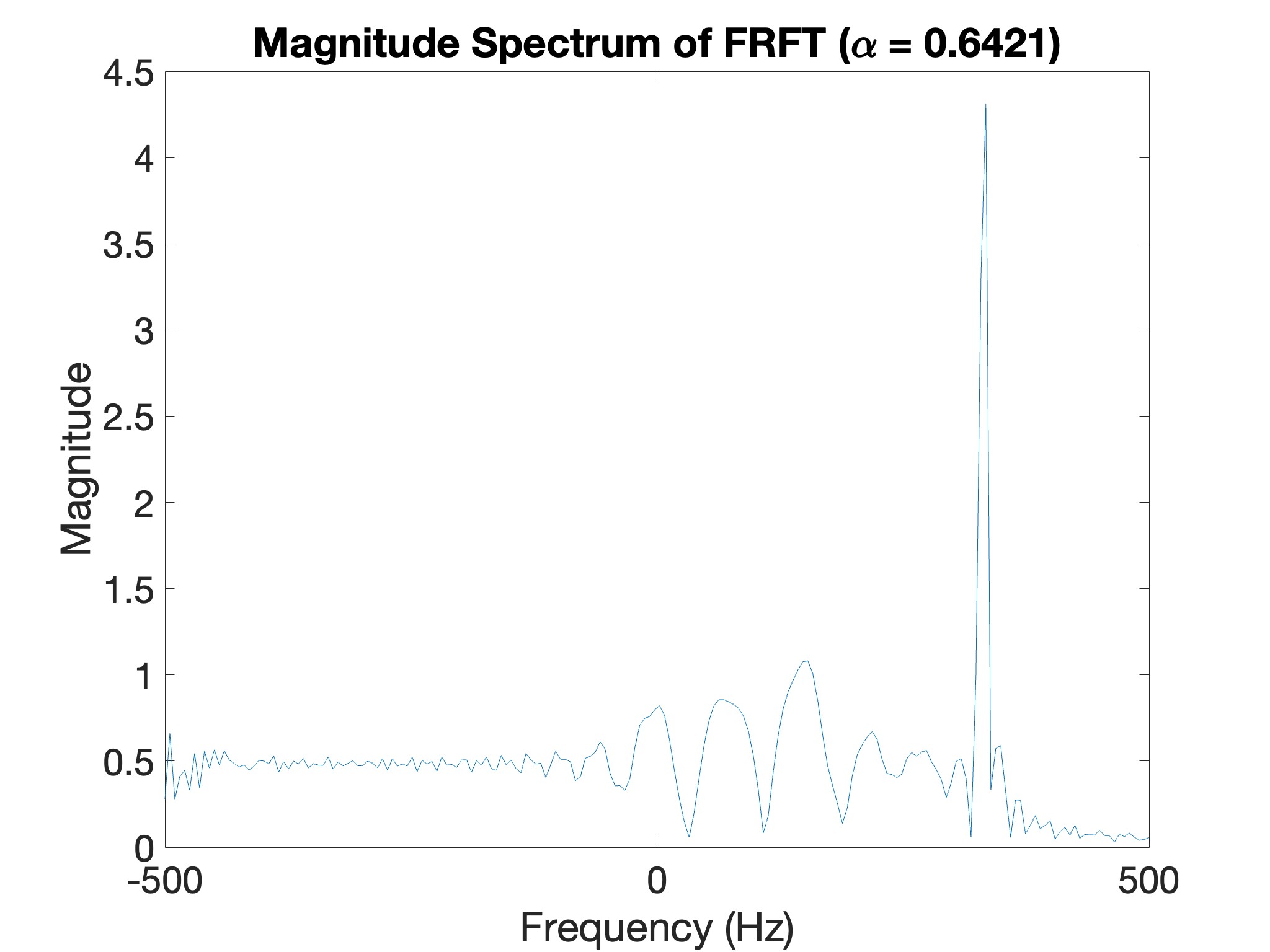}
			\parbox{4cm}{\tiny (f) Spectrum of the signal at vertex 48 after $\mathcal{H}^{\alpha}_f$-transform with $\alpha=0.64$.}
		\end{minipage}
	\end{center}
	\caption{Spectral representation of radar chirp signals at certain vertices.}
	\vspace*{-3pt}
	\label{fig11}
\end{figure}

Then, the graph signals at specific time points are transformed using Eq. \eqref{Gbeta}. Figs. \ref{fig12}(a) and (b) show the spectrum of the signals at times 1 and 20, respectively. Figs. \ref{fig12}(c) and (d) present the graph spectrum after applying the $\mathcal{G}_{f}$-transform. Finally, Figs. \ref{fig12}(e) and (f) display the graph spectrum of the signals in (a) and (b) after applying the $\mathcal{G}^{\beta}_{f}$-transform with $\beta = 0.5$. By adjusting the parameter $\beta$, we can obtain smoothness patterns from different perspectives.
\begin{figure}[h!]
	\begin{center}
		\begin{minipage}[t]{0.47\linewidth}
			\centering
			\includegraphics[width=\linewidth]{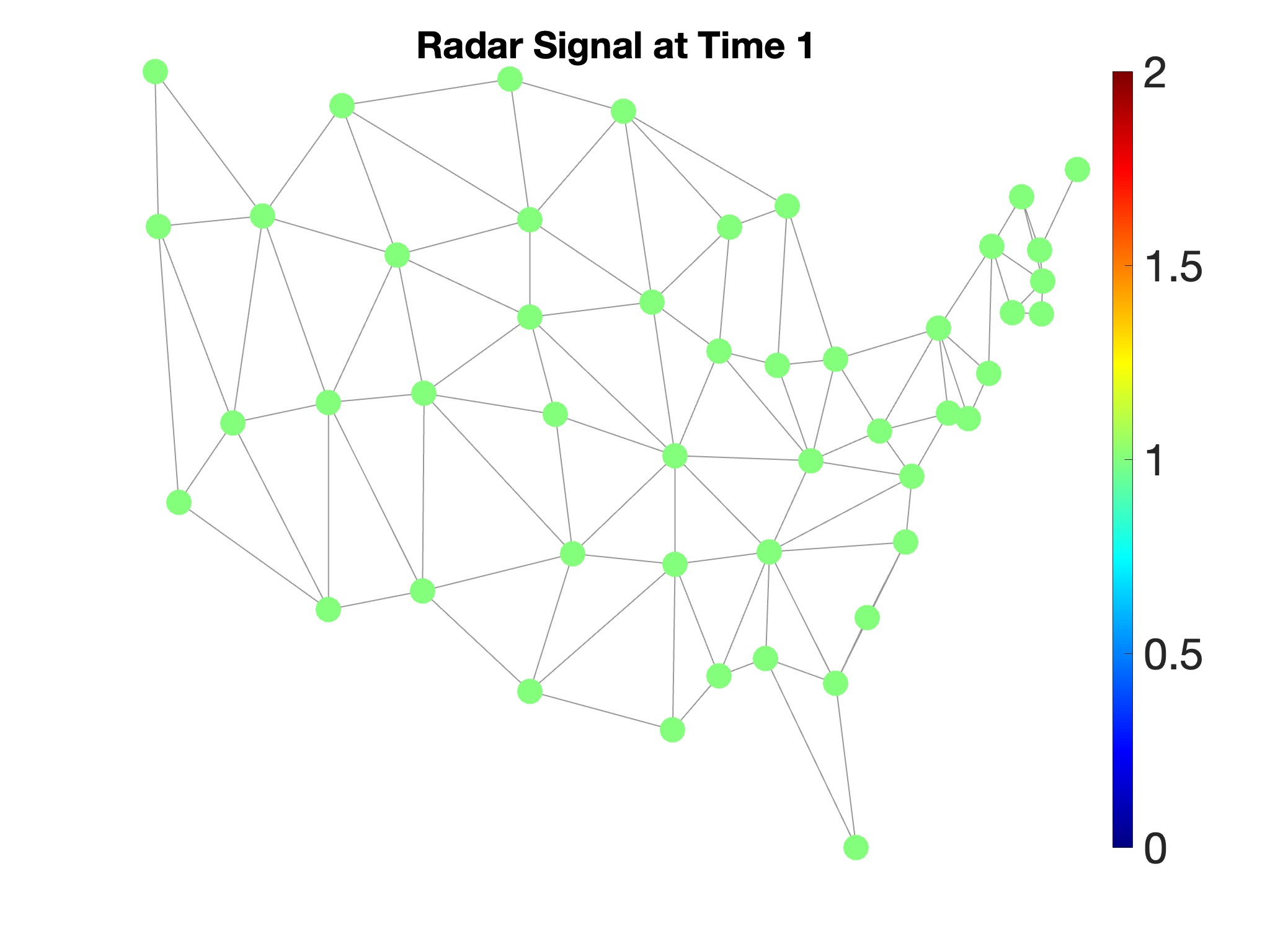}
			\parbox{2cm}{\tiny (a) Graph signal at time 1.}
		\end{minipage}
		\begin{minipage}[t]{0.47\linewidth}
			\centering
			\includegraphics[width=\linewidth]{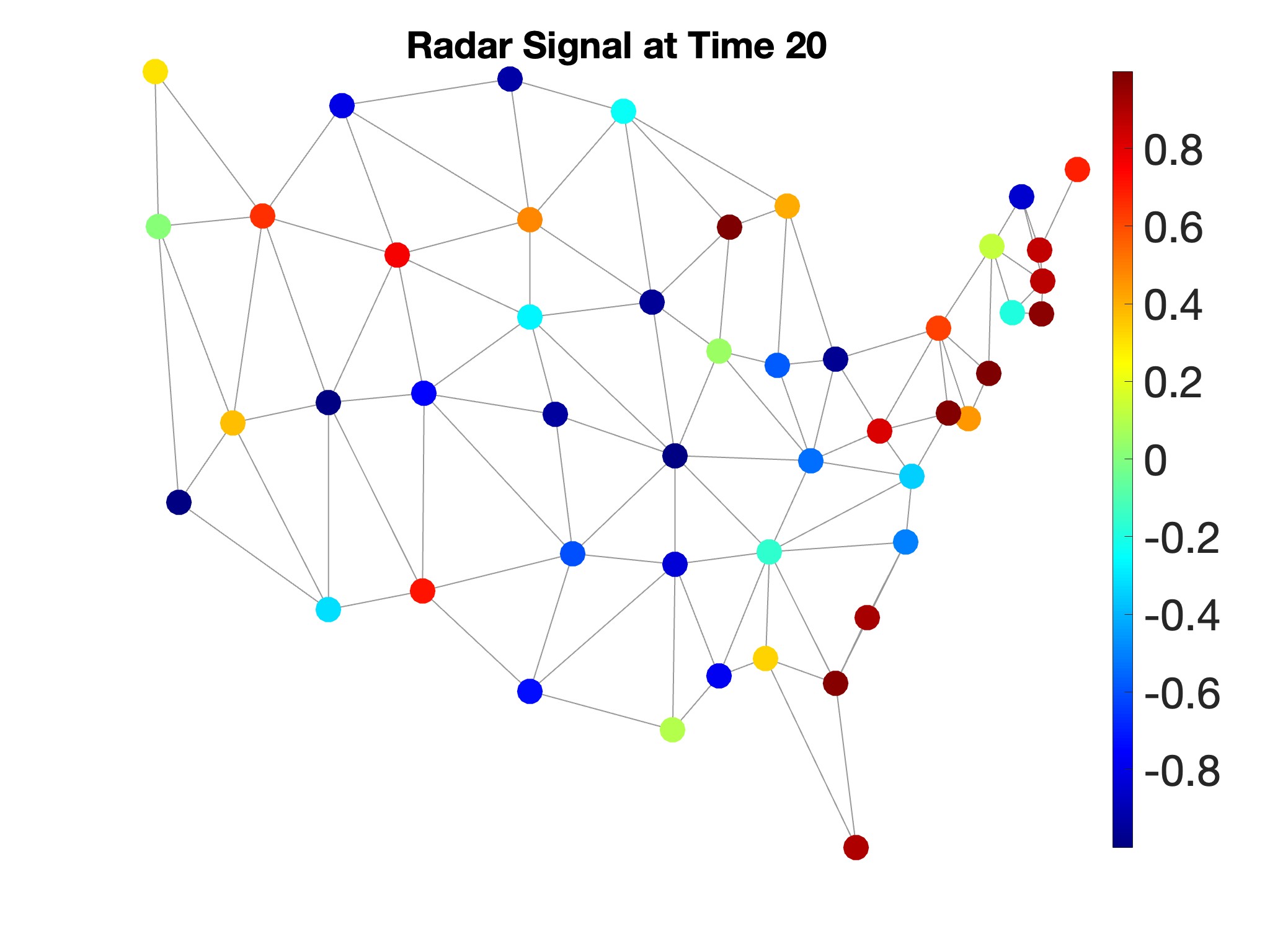}
			\parbox{2cm}{\tiny (b) Graph signal at time 20.}
		\end{minipage}
			\begin{minipage}[t]{0.47\linewidth}
			\centering
			\includegraphics[width=\linewidth]{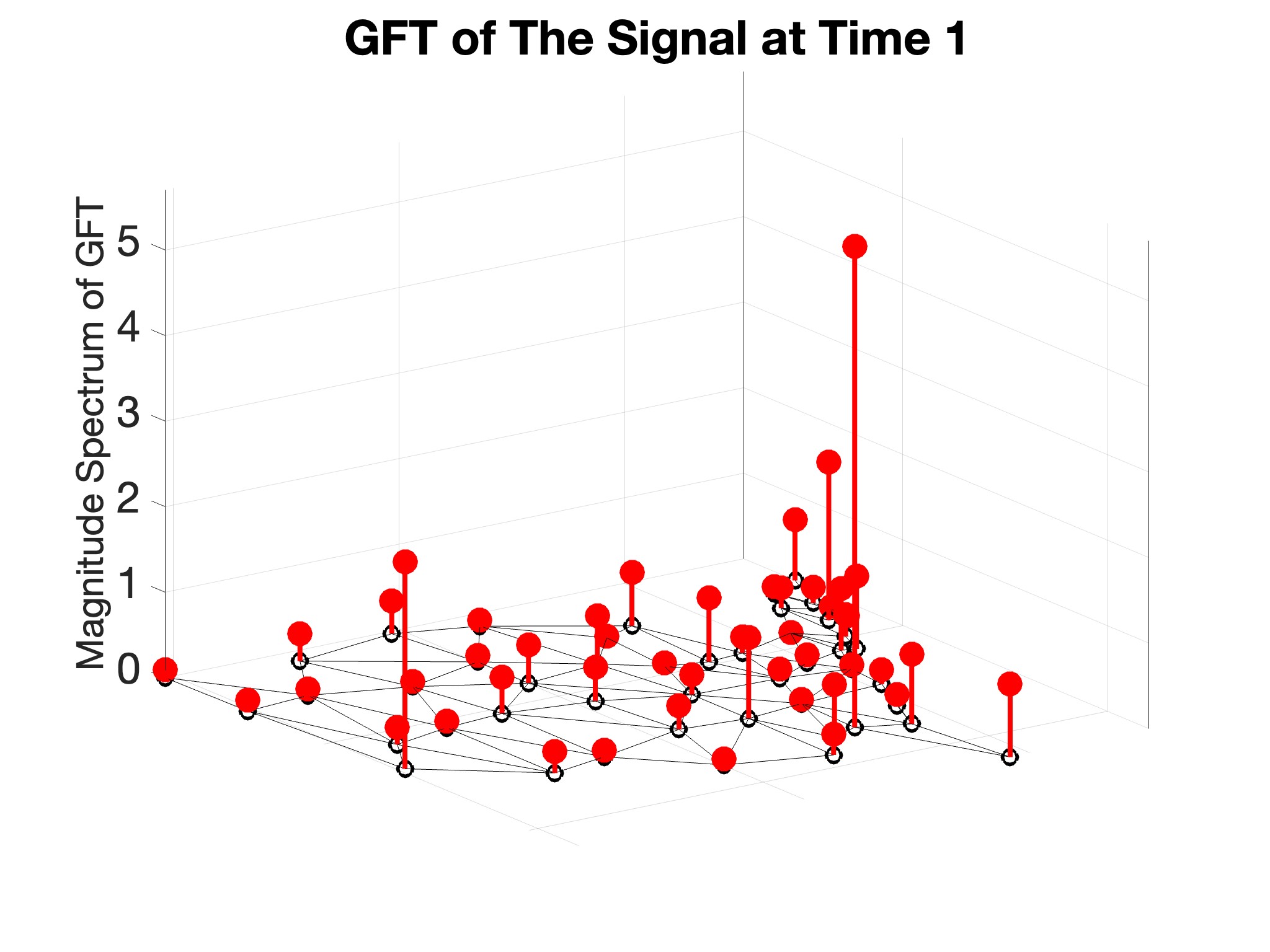}
			\parbox{4cm}{\tiny (d) Graph spectrum of the signal at time 1 after $\mathcal{G}_f$-transform.}
		\end{minipage}	
		\begin{minipage}[t]{0.47\linewidth}
		\centering
		\includegraphics[width=\linewidth]{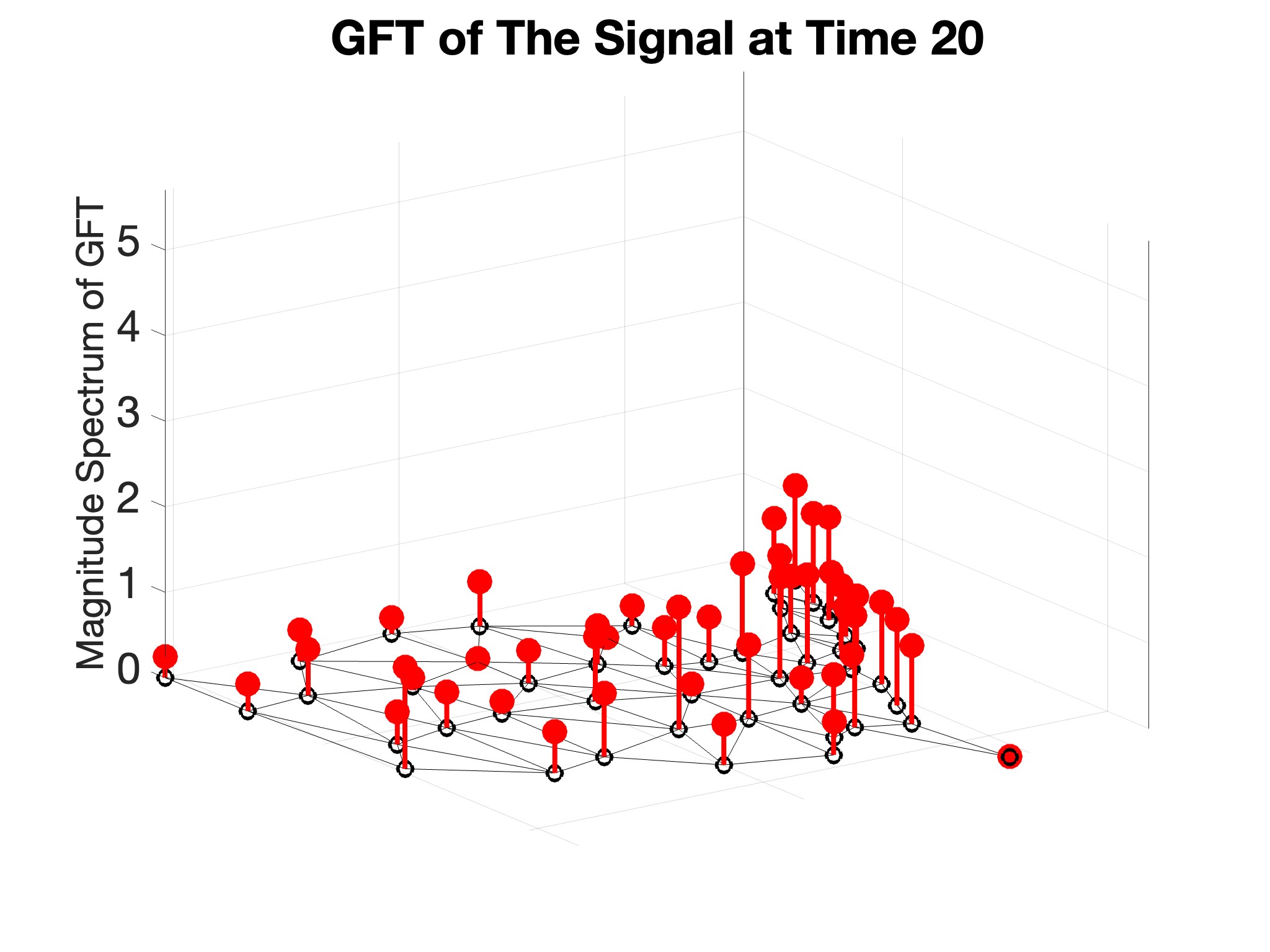}
		\parbox{4cm}{\tiny (e) Graph spectrum of the signal at time 20 after $\mathcal{G}_f$-transform.}
		\end{minipage}	
		\begin{minipage}[t]{0.47\linewidth}
		\centering
		\includegraphics[width=\linewidth]{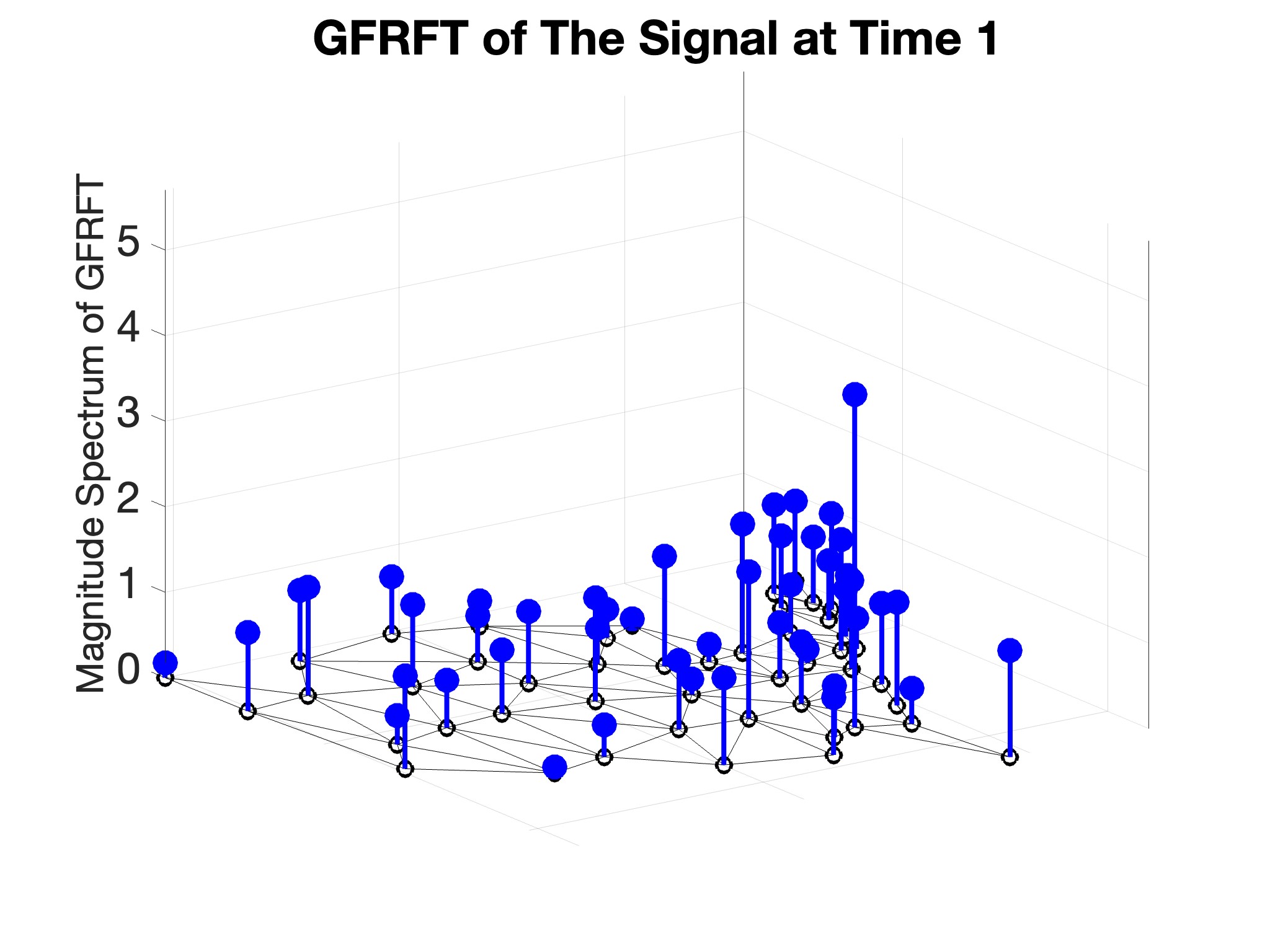}
		\parbox{4cm}{\tiny (g) Graph spectrum of the signal at time 1 after $\mathcal{G}^{\beta}_f$-transform with $\beta=0.5$.}
		\end{minipage}	
		\begin{minipage}[t]{0.47\linewidth}
		\centering
		\includegraphics[width=\linewidth]{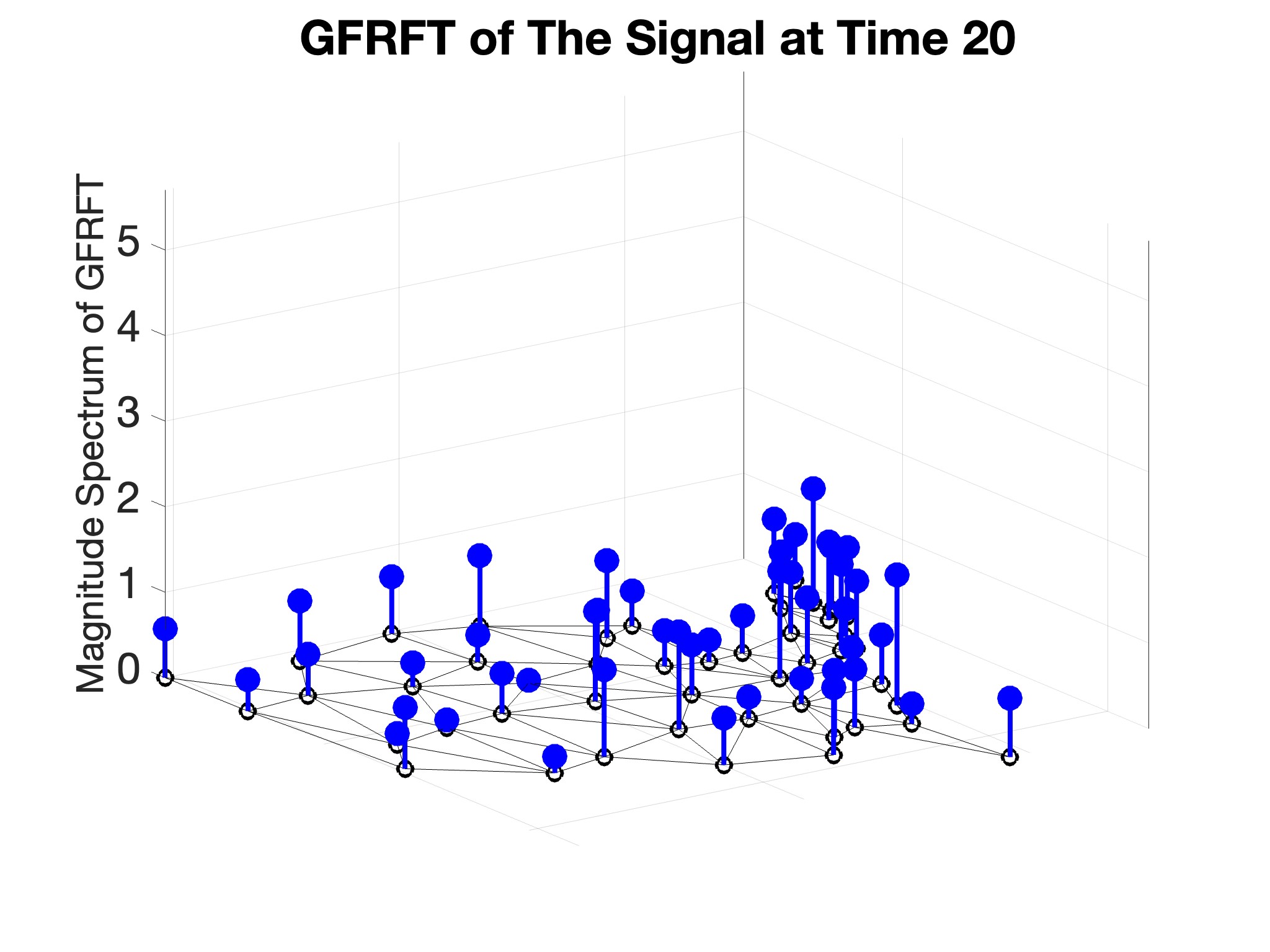}
		\parbox{4cm}{\tiny (h) Graph spectrum of the signal at time 20 after $\mathcal{G}^{\beta}_f$-transform with $\beta=0.5$.}
		\end{minipage}	
	\end{center}
	\caption{Spectral representation of graph signals at specific time steps for the 48 contiguous U.S. states.}
	\vspace*{-3pt}
	\label{fig12}
\end{figure}

Additionally, Fig. \ref{fig13} shows the Hilbert space–vertex signal representation. Panel (a) shows the spectral representation of the chirp signals on the graph, while panel (b) presents the transformed spectrum obtained via the HGFRFT with parameters $(0.1, 1)$. It is evident that the spectrum becomes more concentrated after transformation.
\begin{figure}[h!]
	\begin{center}
		\begin{minipage}[t]{0.47\linewidth}
			\centering
			\includegraphics[width=\linewidth]{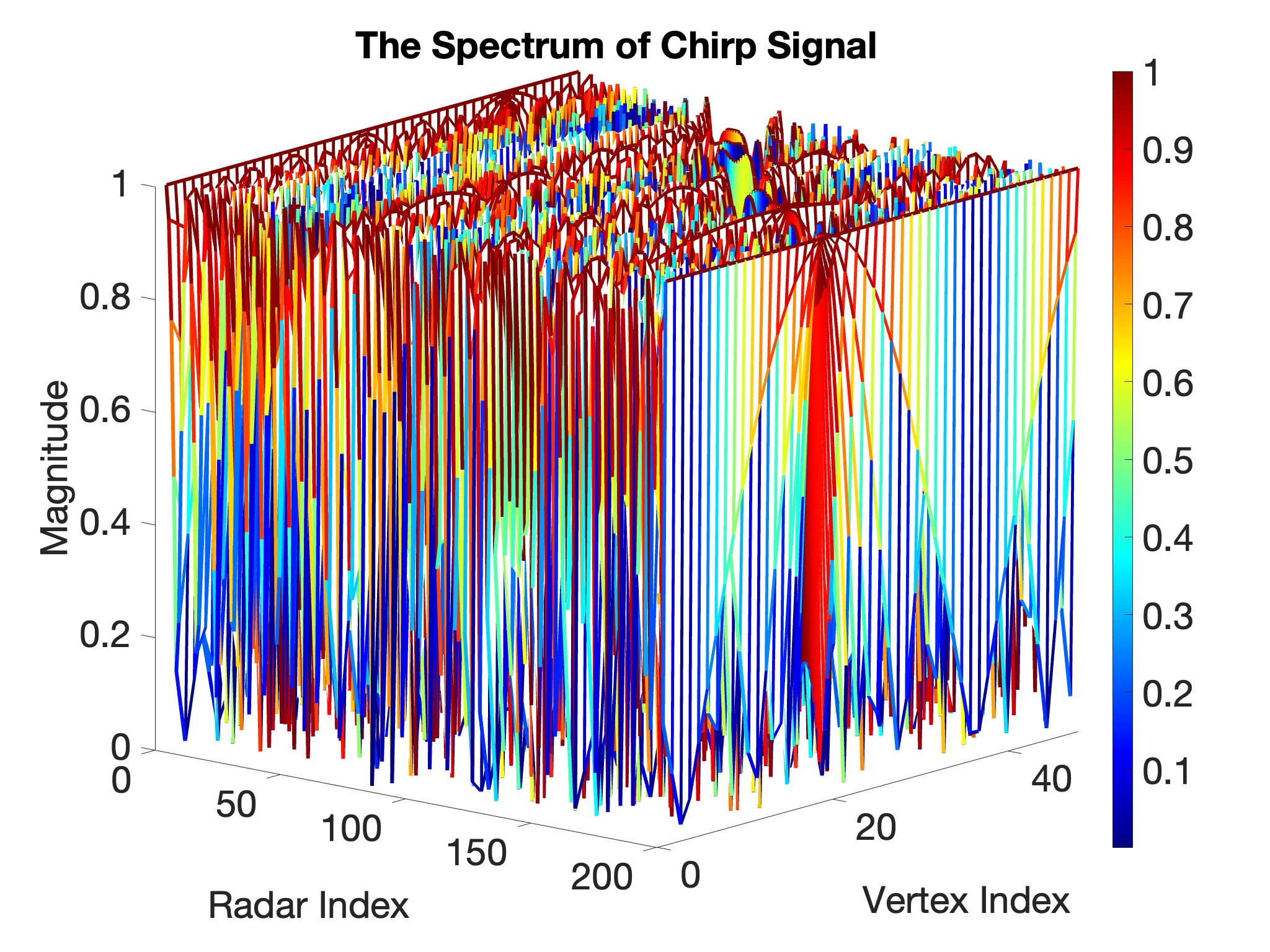}
			\parbox{4cm}{\tiny (a) Spectrum of the chirp signals on the graph.}
		\end{minipage}
		\begin{minipage}[t]{0.47\linewidth}
			\centering
			\includegraphics[width=\linewidth]{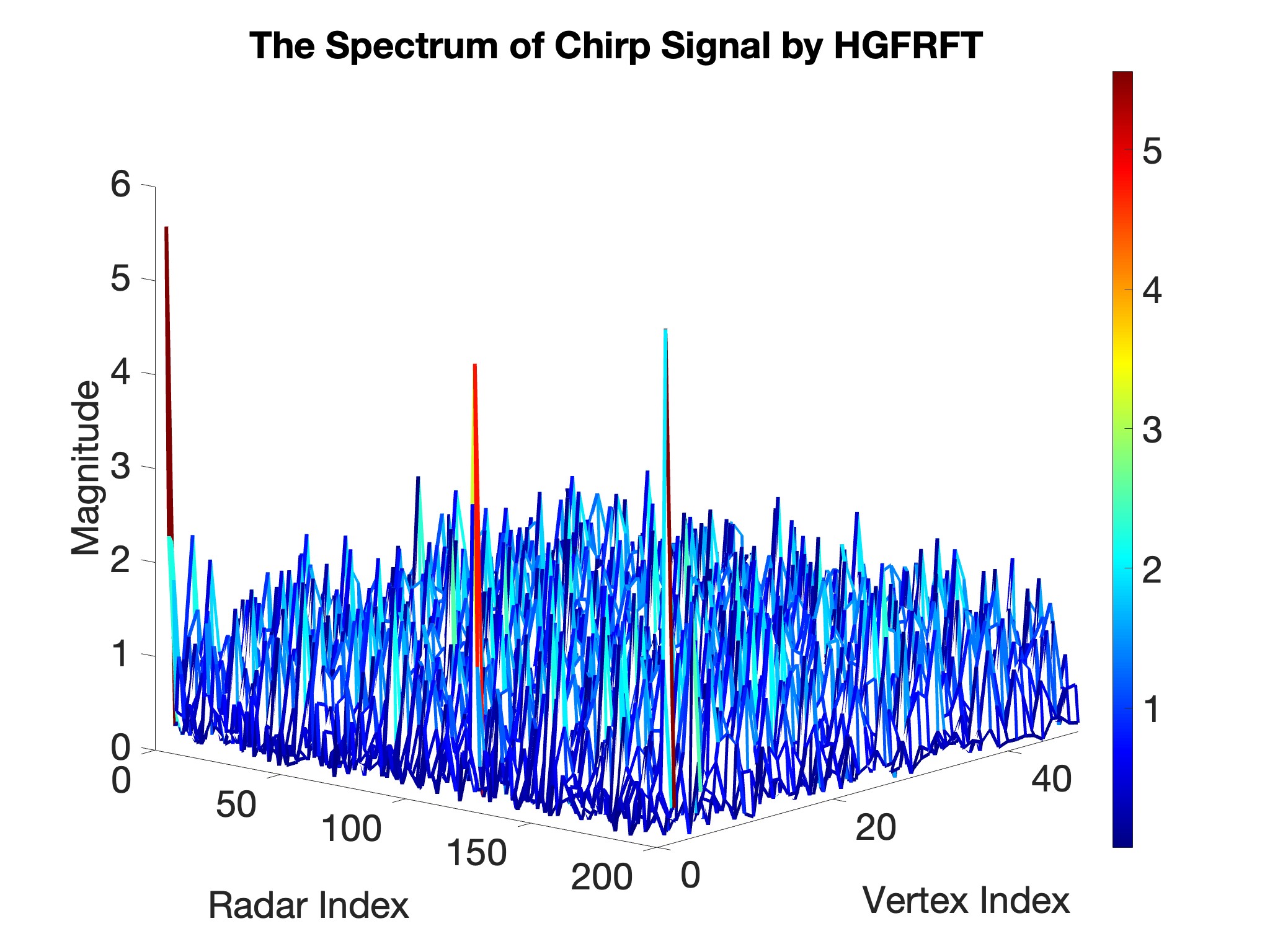}
			\parbox{4cm}{\tiny (b) Graph spectrum of the signals after HGFRFT with $\alpha=0.1$, $\beta=1$.}
		\end{minipage}
	\end{center}
	\caption{Spectral representation of the Hilbert space-vertex signal.}
	\vspace*{-3pt}
	\label{fig13}
\end{figure}

\subsubsection{Sampling and Reconstruction of Radar Signals on Graphs}\label{sec6.3.2}
In this study, we reconstruct the entire signal using fewer sampling nodes. Specifically, we employed chirp signals along with temperature, precipitation, and sunshine data\footnote{\url{https://www.currentresults.com/Weather/US/weather-averages-index.php}} collected from U.S. states as radar signals. By applying Lemma \ref{lem2}, the signals are bandlimited to ensure they lie within $\text{span}(\psi \otimes \phi)$. The HGFRFT is then applied to these bandlimited signals. Fig. \ref{fig14} presents the transformed spectra: (a) shows the HGFRFT spectrum of the bandlimited chirp signal, while (b) is a representative of real data, showing the HGFRFT spectrum of temperature data for the four seasons across the 48 contiguous U.S. states.
\begin{figure}[h!]
	\begin{center}
		\begin{minipage}[t]{0.47\linewidth}
			\centering
			\includegraphics[width=\linewidth]{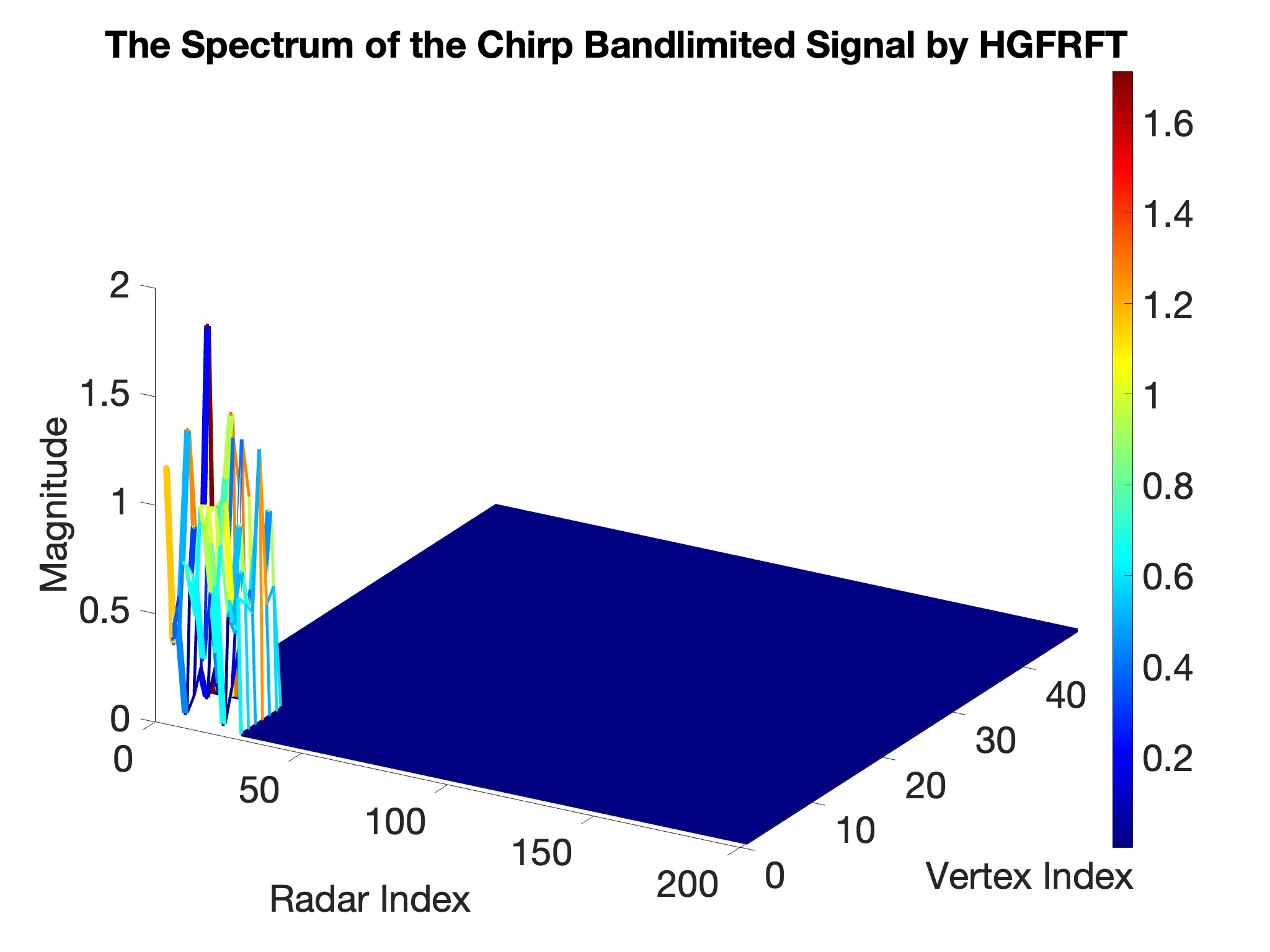}
			\parbox{4cm}{\tiny (a) Graph spectrum of the chirp bandlimited signals after HGFRFT.}
		\end{minipage}
		\begin{minipage}[t]{0.47\linewidth}
			\centering
			\includegraphics[width=\linewidth]{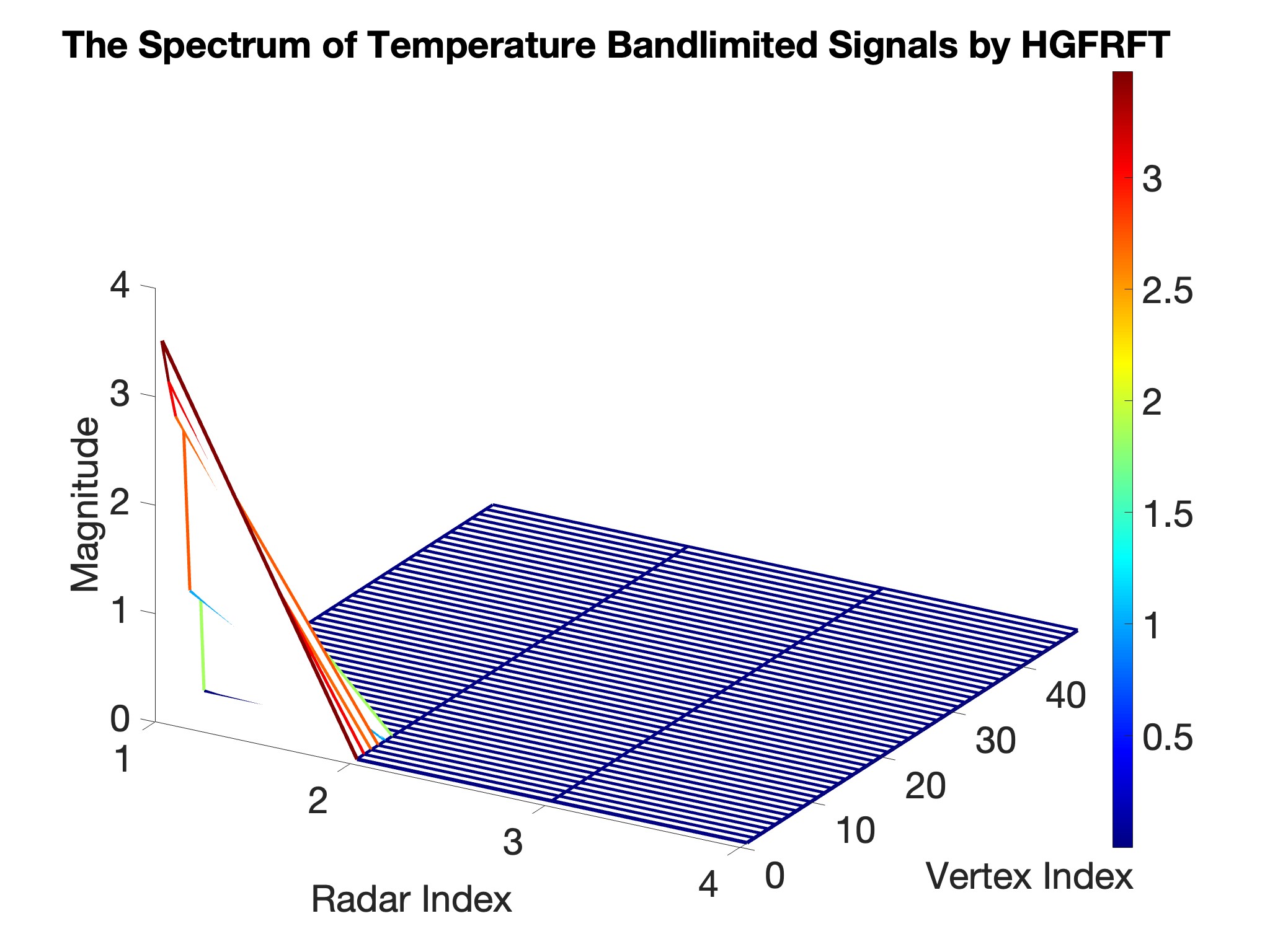}
			\parbox{4cm}{\tiny (b) Graph spectrum of the temperature bandlimited signals after HGFRFT.}
		\end{minipage}
	\end{center}
	\caption{Spectrum of radar bandlimited signal after HGFRFT.}
	\vspace*{-3pt}
	\label{fig14}
\end{figure}

We set the number of sampling nodes equal to the bandwidth, which is $156$ for the chirp signal and $6$ for others. Using Algorithm \ref{alg1}, the optimal sampling set is determined, and the signal is reconstructed based on Theorem \ref{thm3}. The error is computed using Eq. \eqref{error}. Table \ref{tab2} presents the optimal parameters $(\alpha, \beta)$ and the corresponding minimum reconstruction errors for different radar signals. Although the reconstruction error for HGFT is already very small, optimized HGFRFT parameters still offer advantages, even when the signal lies within the HGFT bandlimited ($f \in \text{span}(\psi \otimes \phi)$). While the improvement may be limited, the optimized HGFRFT leads to better recovery, as shown in Table \ref{tab2}. More importantly, when the signal is within the HGFRFT bandlimited, the gap between HGFRFT and HGFT becomes more significant (cf. Section \ref{sec6.1}). The introduction of HGFRFT provides greater flexibility, where even minor error improvements can have a significant impact in practical applications, particularly for different signal types and noisy environments.
\begin{table*}[t]
	\centering
	\caption{Optimal Parameters and Minimum Reconstruction Error for Different Radar Signals}\label{tab2}
		\begin{tabular}{lcccc}
			\toprule
			Radar Signals & HGFT $(\alpha, \beta)$&HGFT Reconstruction Error & HGFRFT Optimal $(\alpha, \beta)$ & HGFRFT Reconstruction Error  \\
			\midrule
			Chirp Signals     & $(1, 1)$ & $2.3171 \times 10^{-12}$ & $(2.35, 1.01)$ & $1.8780 \times 10^{-12}$ \\
			Temperature Signals & $(1, 1)$ & $7.6174 \times 10^{-14}$ & $(1.32, -0.50)$ & $5.9533 \times 10^{-14}$ \\
			Precipitation Signals  & $(1, 1)$ & $2.2661 \times 10^{-13}$ & $(0.67, 1.81)$ & $1.4334 \times 10^{-13}$\\
			Sunshine Signals   & $(1, 1)$ & $7.6416 \times 10^{-14}$ & $(0.77, 1.91)$ & $4.9694 \times 10^{-14}$ \\
			\bottomrule
	\end{tabular}
\end{table*}

\subsection{Digital Image Denoising}\label{sec6.4}
In signal processing theory, signals are typically modeled as functions within an infinite-dimensional Hilbert space. As demonstrated in Example \ref{examp4} of Section \ref{sec3.1}, this framework provides an infinite-dimensional representation of signals in the complex domain, enabling precise and continuous analysis and transformation.

To apply these theoretical concepts to practical computations, we first approximate the continuous signal space using an $N \times N$ grid. Based on the promising experimental results in \cite{HGFT}, we employ Chebyshev polynomials in instead of the DFRFT to approximate the basis of continuous functions within the infinite-dimensional Hilbert space $L^{2}([-1,1])$. Specifically, we select 100th-order Chebyshev polynomials within the interval $[-1, 1]$, leveraging their orthogonality. These polynomials are organized in a matrix, with each row representing a polynomial of a specific order.

For signal construction, we utilize two digit images (e.g., “0” and “6”) and define an interpolation function to generate signals transitioning between these images \cite{HGFT}. We then apply the HGFRFT to project these signals onto graph fractional Laplacian eigenvectors \cite{GFRFTspectral} and fractional Chebyshev polynomials. After introducing Gaussian noise $\mathcal{N}(0, 25^2)$ to the images, we apply HGFRFT, combining joint fractional Chebyshev polynomials with GFRFT, for filtering and signal reconstruction. By analyzing the weighted coefficients and eigenvector contributions during reconstruction, we achieve the restored signals. As illustrated in Fig. \ref{fig15}, slight adjustments to the parameters $(\alpha, \beta)$ can enhance denoising effectiveness. Moreover, when $\alpha$ takes a negative value, the image undergoes both color inversion and digit reversal.
\begin{figure}[h!]
	\begin{center}
		\begin{minipage}[t]{0.9\linewidth}
			\centering
			\includegraphics[width=\linewidth]{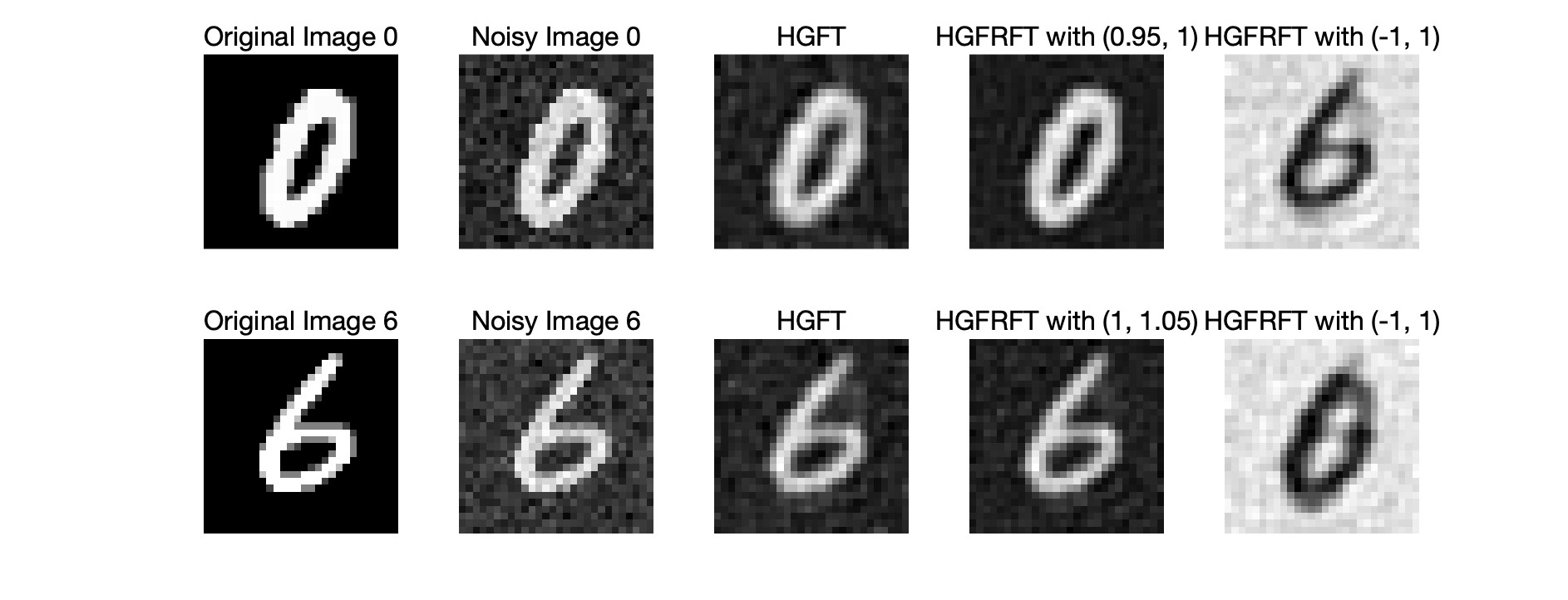}
		\end{minipage}
	\end{center}
	\caption{Digital images 0 and 6.}
	\label{fig15}
\end{figure}

By mapping signals from infinite-dimensional Hilbert space to an approximate continuous domain and incorporating fractional parameters $\alpha$ and $\beta$, we establish a versatile signal transformation framework. HGFRFT provides a multifaceted tool for signal analysis, while the denoising and reconstruction steps significantly enhance signal recovery quality and efficacy.

\section{Conclusion}
This paper introduces the graph fractional Fourier transform in Hilbert space (HGFRFT), a generalization of the Fourier transform in generalized graph signal processing, akin to how the fractional Fourier transform extends the Fourier transform. Key properties, filtering, and sampling theories are developed, enabling the reduction of signals from infinite continuous domains to finite ones. A product graph example highlights the advantages of the HGFRFT. The framework is not only mathematically elegant and practical but has also demonstrated its effectiveness through applications to real-world datasets and various scenarios, highlighting its utility and versatility.

\appendices
\section{Uniqueness of Fractional Powers}
\label{appendixA}
To ensure the uniqueness of fractional powers, for a complex number $z$, the principal logarithm is defined as $\text{Log}(z) = \log(r) + i \theta$, where $\theta \in (-\pi, \pi]$. The fractional power $z^\beta$ is then computed as $e^{\beta \text{Log}(z)}$. For a matrix $ \mathbf{\Lambda}$, we compute the principal logarithm $\text{Log}(\mathbf{\Lambda})$ and apply the fractional power
\[\mathbf{\Lambda}^\beta = e^{\beta \text{Log}(\mathbf{\Lambda})}.\]
To ensure the principal logarithm is properly calculated, for negative real eigenvalues $\lambda_j$, the principal logarithm is given by
\[\text{Log}(\lambda_j) = \log(|\lambda_j|) + i \text{arg}(\lambda_j),\]
where $\text{arg}(\lambda_j)$ is the principal argument of the negative real number, constrained to $(-\pi, \pi]$ \cite{Matrices}.

\section{Spectral Decomposition of Compact, Self-Adjoint Operators}
\label{appendixB}
Given that $\mathbf{B}$ is compact, self-adjoint, and injective, the spectral theorem ensures that $\mathbf{B}$ has a countable set of real eigenvalues $\{\lambda_i\}_{i \geq 1}$ and corresponding orthonormal eigenvectors $\Psi=\{ \psi_i \}_{i \geq 1}$, forming a complete basis for the Hilbert space \(\mathcal{H}\) \cite{Functional,Hilbert}. Consequently, \(\mathbf{B}\) can be expressed as
\[
\mathbf{B} = \sum_{i \geq 1} \lambda_i \psi_i \psi_i^{\ast},
\]
where $\psi_i$ are the orthonormal basis vectors corresponding to the eigenspaces of $\mathbf{B}$.\\
In an infinite-dimensional Hilbert space, the orthonormal basis $\Psi$ satisfies orthogonality and normalization, specifically \(\langle \psi_i, \psi_j \rangle = 0\) for \(i \neq j\) and \(\|\psi_i\| = 1\) for all \(i\). Based on this orthonormal basis, we can define the fractional power $\Psi^{\alpha}$ as
\[
\Psi^{\alpha} := \sum_{i \geq 1} \kappa _i^{\alpha}\xi_i  \xi_{i}^{\ast},
\]
where $\kappa_i$ represents the eigenvalues associated with the basis vectors $\Xi=\{ \xi_i \}_{i \geq 1}$. This expression preserves the integrity of the spectral decomposition while enabling effective application of fractional operators in the infinite-dimensional Hilbert space.

\section*{Acknowledgments}
This work were supported by the National Natural Science Foundation of China [No. 62171041], the BIT Research and Innovation Promoting Project [No. 2023YCXY053], and Natural Science Foundation of Beijing Municipality, China [No. 4242011]. The authors would like to thank the editor, reviewers, and Prof. Linyu Peng of Keio University for their comments that improved the technical quality of this work.

\newpage

 





\end{document}